\crefname{equation}{}{}
\apptocmd{\sloppy}{\hbadness 10000\relax}{}{} 
\crefname{algocf}{Algorithm}{Algorithms}
\crefname{equation}{}{} 
\crefname{conjecture}{Conjecture}{Conjectures} 
\colorlet{refkey}{orange!20}
\colorlet{labelkey}{blue!30}
\crefname{algocf}{Algorithm}{Algorithms}
\numberwithin{equation}{section}
\newtheorem{theorem}{Theorem}[section]
\newtheorem{proposition}[theorem]{Proposition}
\newtheorem{lemma}[theorem]{Lemma}
\newtheorem{claim}[theorem]{Claim}
\crefname{claim}{Claim}{Claims}
\newtheorem*{question*}{Question}
\theoremstyle{definition}
\newtheorem{definition}[theorem]{Definition}
\newtheorem*{definition*}{Definition}
\theoremstyle{remark}
\newtheorem*{remark}{Remark}
\newtheorem*{remarks}{Remarks}
\newcommand{\snorm}[1]{\lVert#1\rVert}
\newcommand{\legendre}[2]{\left(\frac{#1}{#2}\right)}
\newcommand{\mb}{\mathbb}
\newcommand{\mbm}{\mathbbm}
\newcommand{\mc}{\mathcal}
\newcommand{\mf}{\mathfrak}
\newcommand{\on}{\operatorname}
\newcommand{\wh}{\widehat}
\newcommand{\wt}{\widetilde}
\newcommand{\eps}{\varepsilon}
\newcommand\Supp{\operatorname{Supp}}
\newcommand\cubefull{{\textnormal{cube-full}}}
\newcommand\WW{W}
\renewcommand{\le}{\leqslant}
\renewcommand{\ge}{\geqslant}
\renewcommand{\Re}{\on{Re}}
\renewcommand{\Im}{\on{Im}}
\newcommand\vecbeta{\vec{\beta}}
\newcommand\Z{\mathbf{Z}}
\newcommand\C{\mathbf{C}}
\newcommand\R{\mathbf{R}}
\newcommand\N{\mathbf{N}}
\newcommand{\md}[1]{\ensuremath{(\operatorname{mod}\, #1)}}
\newcommand{\mdsub}[1]{\ensuremath{(\mbox{\scriptsize mod}\, #1)}}
\newcommand{\mdlem}[1]{\ensuremath{(\mbox{\textup{mod}}\, #1)}}
\newcommand{\mdsublem}[1]{\ensuremath{(\mbox{\scriptsize \textup{mod}}\, #1)}}
\title{New bounds for the Furstenberg--S\'{a}rk\"{o}zy theorem}
\author[A1]{Ben Green}
\address{Mathematical Institute, Andrew Wiles Building, Radcliffe Observatory Quarter, Woodstock Rd, Oxford OX2 6QW, UK}
\email{ben.green@maths.ox.ac.uk}
\author[A2]{Mehtaab Sawhney}
\address{Department of Mathematics, Columbia University, New York, NY 10027}
\email{m.sawhney@columbia.edu}
\begin{document}

\begin{abstract}
Suppose that $A \subset \{1,\dots, N\}$ has no two elements differing by a square. Then $|A| \ll N e^{-c\sqrt{\log N}}$. 
\end{abstract}

\maketitle

\setcounter{tocdepth}{1}
\tableofcontents
\section{Introduction}

Our main theorem is the following. Throughout the paper, we write $[X] = \{1,\dots, X\}$ for positive integer $X$.

\begin{theorem}\label{thm:main}
There exists $c_0 >0$ for which the following holds. Let $X \ge 10$ and suppose that $A\subseteq [X]$ has no solutions to $a_1-a_2 = n^2$ with $a_1,a_2\in A$ and $n\in \N$. 
\[|A|\le  X\exp(-c_0\sqrt{\log X}).\]
\end{theorem}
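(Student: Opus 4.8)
The plan is to follow the density-increment strategy that underlies all quantitative work on the Furstenberg--S\'ark\"ozy theorem, but to push the Fourier-analytic input as far as possible using recent technology on the structure of sets with large Fourier coefficients. Fix $A \subseteq [X]$ with no square difference and density $\alpha = |A|/X$. First I would work in $\Z/N\Z$ for a suitable prime $N$ comparable to $X$ (or a product of small primes, to have many square residues available at each scale), and count (weighted) solutions to $a_1 - a_2 = n^2$ with $n$ ranging over $[\sqrt{M}]$ for a parameter $M$ chosen as a small power of $N$. Writing $f = \one_A - \alpha$ for the balanced function, the count splits as a main term of size roughly $\alpha^2 \sqrt M N$ plus an error governed by $\sum_{r \ne 0} |\widehat{f}(r)|^2 |\widehat{\nu}(r)|$, where $\nu$ is the normalized counting measure on squares. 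Since $A$ has no square difference, the weighted count is zero, so we obtain the $\ell^2$-type inequality forcing the existence of a large Fourier coefficient, or more precisely a concentration of $\widehat f$ on the set where the exponential sum $\widehat\nu(r) = M^{-1/2}\sum_{n \le \sqrt M} e(rn^2/N)$ is not small.

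The classical approach (Pintz--Steiger--Szemer\'edi, and later work) then uses Weyl/Gauss-sum estimates: $\widehat\nu(r)$ is small unless $r$ is close to a rational with small denominator, so the relevant frequencies lie in a Bohr-type set, on which one runs a density increment. The key steps in the order I would carry them out are: (i) a Weyl-sum/minor-arc bound showing $|\widehat\nu(r)|$ is controlled by the denominator $q$ in the rational approximation $r/N \approx a/q$; (ii) a pigeonhole/covering step identifying a structured set $R$ (union of a bounded number of Bohr sets, or a generalized arithmetic progression of bounded rank in the dual) carrying most of the $\ell^2$ mass of $\widehat f$; (iii) an $L^\infty \to L^2$ or "arithmetic regularity" passage converting this spectral concentration into a genuine density increment $\alpha \mapsto \alpha(1 + c)$ on a sub-progression $P$ whose common difference is a perfect square (this last constraint is the characteristic feature of S\'ark\"ozy-type problems and is why one works with square residues and squared common differences); (iv) iterating, with careful bookkeeping of how the scale parameter $N$, the density $\alpha$, and the parameter $M$ degrade at each step, until either the density exceeds $1$ (contradiction) or $N$ becomes too small.

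To reach the exponent $1/4$ rather than the $\log\log$-type bound of Pintz--Steiger--Szemer\'edi or the $\exp(-(\log N)^c)$ with small $c$ from Bloom--Maynard and earlier Fourier approaches, the crucial improvement must come from getting a density increment of the order of $\alpha$ (not $\alpha^2$ or worse) on a progression whose length shrinks only polynomially, so that only about $\alpha^{-1}$ iterations are needed and the scale loss per step is mild. I would obtain this by invoking the strong quantitative bounds on the structure of the large spectrum $\{r : |\widehat f(r)| \ge \delta\}$ — specifically a Chang-type / Sanders-type covering by a small-rank coset progression — together with the fact that the number of squares in a long progression is itself large, so the density increment can be localized to a progression with square common difference without paying a polynomial-in-$\alpha$ price in length. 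The arithmetic is then a delicate optimization of the two competing parameters: the quality of the Weyl estimate (which wants $M$ large) versus the length of the progression on which we increment (which wants $M$ small), and balancing these is where the exponent $\tfrac14$ and the secondary $(\log\log X)^{-1/2}$ factor emerge.

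The hard part, and the main obstacle, will be step (iii): upgrading spectral information into a density increment on a progression \emph{with square common difference} while losing only a polynomial factor in the length. Ordinary Roth/Sanders-type increments give a progression with arbitrary common difference, but here the iteration must stay within the class of square-difference-free sets tested against square steps, so one needs a sub-progression of the form $\{x, x+d^2, x+2d^2, \dots\}$; producing such a progression of near-optimal length on which $A$ is denser requires either a clever use of the Gauss-sum structure of $\widehat\nu$ at the major arcs (so that the Bohr set one lands in already has a square-friendly common difference) or an additional combinatorial argument passing from a Bohr set to a genuine square-step progression. Controlling the rank and radius of these Bohr sets throughout the iteration, and ensuring the total multiplicative loss in $N$ over all $\sim \alpha^{-1}$ steps is only $\exp((\log X)^{3/4+o(1)})$, is the technical core; everything else is, by comparison, routine Fourier analysis and careful constant-chasing.
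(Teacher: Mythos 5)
Your outline reproduces the classical density-increment framework (balanced function, Weyl/Gauss-sum major--minor arc analysis, increment on a progression with square common difference, iteration), and indeed the paper follows that same skeleton: the exponential-sum estimates, the passage to a progression of common difference $q^2$ via a foliation argument, and the inductive bookkeeping are all present and are essentially routine. But the step you identify as the source of the improvement --- ``strong quantitative bounds on the structure of the large spectrum, specifically a Chang-type / Sanders-type covering by a small-rank coset progression'' --- is a genuine gap. That route is, in substance, what Bloom and Maynard already carried out, and it is only known to yield bounds of the shape $X(\log X)^{-c\log\log\log X}$; nothing in your sketch explains how spectral covering results would suddenly produce a quasi-polynomial saving $\exp(-c(\log X)^{1/4}(\log\log X)^{-1/2})$, and no such mechanism is known. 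The paper's actual new input is of a completely different nature: an \emph{arithmetic level-$d$ inequality} (\cref{thm:level-d-int}), proved by transferring the global hypercontractivity theory of Keller--Lifshitz--Marcus from product groups $\prod_{q\in\mc{Q}}\Z/q\Z$ to the integers via a lifting argument. This inequality gives, for a family $\mc{Q}$ of pairwise coprime moduli, the dichotomy that either $\sum_{|S|=d}\sum_{a}|\wh{f}(a/\prod_{q\in S}q)|^2 \le \alpha^2X^2(C_0\log(1/\alpha)/d)^d$, or $A$ has density $\ge 2^{|S|}\alpha$ on a progression whose modulus is a product of at most $2\log(1/\alpha)$ elements of $\mc{Q}$. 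It is this exponential-in-$d$ control of the level-$d$ Fourier mass (applied twice, with two carefully chosen coprime families, to first localise the relevant denominators to $q\le \log(1/\alpha)^{O(\log\log X)}$ and then to $q\ll\log(1/\alpha)^4$) that replaces your step (ii), and without it the dichotomy you describe is quantitatively far too weak.

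Two further points in your sketch are off. First, you locate the main difficulty in step (iii), obtaining an increment on a progression with square common difference; in fact this is handled by an elementary pigeonhole/foliation lemma going back to S\'ark\"ozy (pass from modulus $q$ to modulus $q^2$ at the cost of a factor $q$ in length), and it is not where the work lies. Second, your iteration count is miscounted: an increment $\alpha\mapsto(1+c)\alpha$ needs only $O(\log(1/\alpha))$ steps, not $\alpha^{-1}$, and the paper's actual iteration is more delicate than a single increment type --- it runs a four-case dichotomy (doubling increments with length loss $\exp(-Ck\log(1/\alpha)^3)$, a weak increment $\alpha(1+c\log(1/\alpha)^{-2})$ with polylogarithmic length loss, and a $(\log X)^{-O(k)}$ case), and the exponent $\tfrac14$ with the secondary factor $(\log\log X)^{-1/2}$ emerges from balancing these against the function $F(X)=(\log X)^{1/4}(\log(3+\log X))^{-1/2}$ in the induction, not from a trade-off between the Weyl parameter and progression length as you suggest. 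As it stands the proposal would, at best, recover bounds of Bloom--Maynard strength; the missing idea is the hypercontractivity-based level-$d$ inequality and its two-stage application.
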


\subsection{History}
Denote by $s(X)$ the size of the largest subset of $[X]$ not containing two distinct elements differing by a square. Lov\'{a}sz conjectured that $s(X) = o(X)$. Independently, Furstenberg \cite[Theorem~1.2]{Fur77} and S\'ark\"ozy \cite{sarkozy-i} established the conjecture. Furstenberg's work was based on methods from ergodic theory and in particular relies on his eponymous correspondence principle (\cite{Fur77} is also where Furstenberg famously gave an alternate proof Szemer\'edi's theorem).

S\'ark\"ozy's proof was based on Fourier analysis and therefore, unlike Furstenberg's work, gave a rate on $s(X)$. In particular, S\'ark\"ozy established that 
\[s(X) \ll X (\log \log X)^{2/3}(\log X)^{-1/3}.\]
This improvement of $(\log X)^{-1/3+o(1)}$ over the trivial bound was further enhanced to $(\log X)^{-\omega(X)}$ for some function $\omega(X) \rightarrow \infty$ by Pintz, Steiger, and Szemer\'edi \cite{pintz-steiger-szemeredi}. In particular, Pintz, Steiger, and Szemer\'edi \cite{pintz-steiger-szemeredi} proved that 
\[s(X)\ll X(\log X)^{-\frac{1}{12}\log\log\log\log X}.\]
Bloom and Maynard \cite{bloom-maynard} subsequently showed that one may take 
\[s(X)\ll X(\log X)^{-c\log\log\log X}\] 
for a suitable constant $c>0$, which was the best known bound before the first version \cite{first-version} of this work, which obtained a bound of the shape 
\[ s(X) \ll X \exp(-(\log X)^{1/4 - o(1)}).\]

Regarding lower bounds, a standard probabilistic deletion argument allows one to prove that $s(X) \gg X^{1/2}$. Erd\H{o}s conjectured that $s(X)\ll X^{1/2}(\log X)^{O(1)}$ but this was disproven by S\'ark\"ozy \cite{Sar78}. However S\'ark\"ozy \cite{Sar78} still conjectured that $s(X)\ll X^{1/2+o(1)}$. This was disproven by Ruzsa \cite{ruzsa} who established that $s(X) \gg X^{1/2 + \kappa}$ with $\kappa := \frac{\log 7}{2\log 65} \approx .233077$. This was refined in work of Lewko \cite{lewko}, who showed that $\kappa = \frac{\log 12}{2\log 265}\approx .233412$ is permissible. It remains a major open question to decide whether $s(X) \ll X^{1 - c}$ for some constant $c > 0$, or not. (The authors of the present paper would speculate that the answer is yes.)

We remark that the greedy construction starting from $0$ gives a set $\{0, 2, 5, 7, 10, 12, 15, 17, 20\}$ which appears experimentally to have around $n^{0.7}$ elements up to $n$. Analysing the behaviour of this sequence rigorously seems an interesting problem. This sequence is discussed in the Online Encyclopedia of Integer Sequences \url{https://oeis.org/A030193} as sequence A030193.

We remark that a number of variants of Furstenberg--S\'ark\"ozy theorem have been proven where one replaces the set of squares by the set $\{ P(n) : n \in \Z\}$ with different polynomials $P$. The most general condition under which a variant of the Furstenberg--S\'ark\"ozy theorem holds is when $P(x)$ is intersective;\footnote{$P(x)$ is said to be intersective if for all $q\ge 1$ there is $n\in \Z$ such that $q|f(n)$.} this was proven in work of Kamae and Mend\`{e}s France \cite{KM78}. A number of results have adapted the techniques used for squares to more general univariate polynomials; these include \cite{BPPS94,HLR13,Ric19,Ara23}. It appears very likely that the techniques used to prove \cref{thm:main} adapt routinely to more general polynomials; however we do not address these questions here. 

The survey of L\^e \cite{hoang-survey} gives a comprehensive overview of related questions.

The overall approach to the proof of \cref{thm:main} is the density increment strategy, as used in all previous papers giving an explicit bound for $s(X)$. The basic idea is to establish a dichotomy, stating that if $A \subset [X]$ is a set of size $\alpha$ then either it contains two elements differing by a square, or else we may pass to a long subprogression with square common difference on which the density of $A$ is appreciably larger. Iteration of this argument (and careful control of the parameters) leads to bounds on $s(X)$.

\subsection{Arithmetic level $d$ inequality}

As with previous works, we make heavy use of the Fourier transform (more specifically, a form of the Hardy-Littlewood circle method) in order to obtain the necessary density increment. Our primary new input for \cref{thm:main} is the following result, which potentially has other applications.
If $f : \Z \rightarrow \C$ is a function, we define the Fourier transform as
\[ \wh{f}(\theta) := \sum_{x \in \Z} f(x) e(-\theta x).\] 
\begin{theorem}\label{thm:level-d-int}
Set $C_0 := 2^{13}$. Let $\alpha \in (0,1/2)$ and let $X \ge 1$ be parameters with $\alpha > 2X^{-1/2}$. Let $\mc{Q}$ be a set of pairwise coprime positive integers such that $\max_{q \in \mc{Q}} q \le X^{1/32\log(1/\alpha)}$. Let $1 \le d\le 2^{-7}\log(1/\alpha)$. Let $f : [X] \rightarrow \C$ be a function with such that $|f(x)|\le 1$ for all $x$. Then either
\begin{equation}\label{main41}\sum_{\substack{S \subseteq \mc{Q} \\ |S| = d}}\sum_{\substack{a \md{\prod_{q \in S} q}  \\ q \in S \Rightarrow q \nmid a}}\Big|\wh{f}\Big( \frac{a}{\prod_{q \in S} q} \Big)\Big|^2 \le \alpha^2 X^2 \Big(\frac{C_0\log(1/\alpha)}{d}\Big)^d,\end{equation}
or else for some set $S \subseteq \mc{Q}$, $1 \le |S| \le 2 \log(1/\alpha)$, and for some $r \in \Z$,
the average of $|f(x)|$ on the progression $P = \{ x \in [X] : x \equiv r \md{\prod_{q \in S} q}\}$ is greater than $2^{|S|} \alpha$.
\end{theorem}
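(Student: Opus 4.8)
The plan is to run a standard Hardy--Littlewood "major arcs" argument but organised around the multiplicative structure of $\mc{Q}$, and to exploit the pairwise coprimality to get multiplicativity of the relevant exponential-sum estimates. First I would dispose of the trivial case by a dichotomy: if for \emph{any} admissible $S$ and $r$ the progression $P = \{x \in [X] : x \equiv r \md{\prod_{q\in S}q}\}$ has $\mathbb{E}_{x \in P}|f(x)| > 2^{|S|}\alpha$ we are in the second alternative, so assume not. This hypothesis should be converted, via the usual relation between the restriction of $\wh f$ to the rationals with denominator $Q_S := \prod_{q \in S} q$ and the weighted sum of $f$ along residue classes mod $Q_S$ (i.e. a discrete Fourier inversion on $\Z/Q_S\Z$ applied to the "major arc contribution"), into an upper bound of the shape
\[
\sum_{\substack{a \md{Q_S} \\ q\in S \Rightarrow q \nmid a}} \Big| \wh f\Big(\frac{a}{Q_S}\Big)\Big|^2 \ll (2^{|S|}\alpha)^2 X^2 \cdot \frac{X}{Q_S} \cdot (\text{something summable}),
\]
valid for every fixed $S$ of size $\le 2\log(1/\alpha)$; here the coprimality condition $q \nmid a$ for each $q \in S$ means we are picking out the "primitive" frequencies at level $S$, and condition~(i) guarantees $Q_S \le X^{|S|/32\log(1/\alpha)} \le X^{1/16}$ is small enough that these arcs are genuinely disjoint and the local-to-global passage is lossless.

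Next I would set up the sum over $|S| = d$ in~\eqref{main41} and split according to how $\wh f(a/\prod_{q\in S}q)$ behaves. The key point is a pigeonhole/weight-splitting: for each $d$-subset $S$ and each primitive $a$, either $|\wh f(a/Q_S)|$ is "small" (contributing at most the main term on the right of~\eqref{main41} after summing the geometric-type series in $d$), or it is "large", in which case — again using the major-arc-to-progression dictionary but now in reverse — there is a sub-$S'$ (possibly $S' = S$, possibly a proper subset obtained by discarding the coordinates where the local Fourier coefficient is not large) with $|S'| \le |S| = d \le 2^{-7}\log(1/\alpha) \le 2\log(1/\alpha)$ witnessing a density increment of the required strength $2^{|S'|}\alpha$ on a progression with modulus $Q_{S'}$. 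The bookkeeping here is where the constant $C_0 = 2^{12}$ and the factor $(C_0 \log(1/\alpha)/d)^d$ come from: one is summing over the $\binom{|\mc{Q}|}{d}$ choices of $S$, and condition~(ii), $\prod_{q \in \mc{Q}} q \ge X^2$, together with (i) forces $|\mc{Q}|$ to be fairly large (at least $\gtrsim 64\log(1/\alpha)$), so $\binom{|\mc{Q}|}{d}$ is controlled by $(e|\mc{Q}|/d)^d$ and must be balanced against the $X^{-2}$-type saving coming from $\prod q \ge X^2$ and Parseval.

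For the core estimate I would invoke Parseval/Plancherel on $[X]$: $\sum_\theta |\wh f(\theta)|^2$-type quantities restricted to a fixed set of rationals of denominator dividing $Q$ are bounded by $\|f\|_2^2$ times a multiplicity factor, and $\|f\|_2^2 \le X$ since $|f| \le 1$. The multiplicativity over $S$ — i.e. that the contribution factors as a product over $q \in S$ of local ($\md q$) contributions — is exactly what the pairwise coprimality of $\mc{Q}$ buys via CRT, and it is what lets the sum over all $d$-subsets be repackaged as (roughly) $\frac{1}{d!}\big(\sum_{q \in \mc{Q}} (\text{local term at } q)\big)^d$; bounding $\sum_{q \in \mc{Q}}(\text{local term at }q)$ by $O(\log(1/\alpha))$ — using that each local term is $O(1)$ after the non-increment hypothesis and that $\sum_{q} 1/q$ or $\sum_q (\log q)/\log X$ is controlled by conditions~(i)--(ii) — then yields the claimed bound after $1/d! \le (e/d)^d$.

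I expect the main obstacle to be the second, "reverse" direction of the dichotomy: showing cleanly that a single large primitive Fourier coefficient at level $S$ really does produce a \emph{genuine} $L^1$ density increment $\mathbb{E}_{x\in P}|f(x)| > 2^{|S|}\alpha$ on an \emph{honest} arithmetic progression (not merely a weighted or smoothed version), with the right power $2^{|S|}$ and without the modulus $Q_S$ growing too large — this is exactly where condition~(i) is used, and getting the quantifiers to line up (choosing which sub-$S'$ to pass to, and checking $|S'| \le 2\log(1/\alpha)$) is the delicate part. The rest is Parseval plus careful accounting of binomial coefficients against the $X^2$ lower bound from~(ii).
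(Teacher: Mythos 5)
There is a genuine gap, and it sits exactly at the step that carries all the content of the theorem. Your plan is to factor the level-$d$ Fourier mass multiplicatively: "the contribution factors as a product over $q \in S$ of local $(\mathrm{mod}\ q)$ contributions" via CRT, so that the sum over $d$-subsets becomes roughly $\frac{1}{d!}\bigl(\sum_{q} (\text{local term at } q)\bigr)^d$ with each local sum $O(\log(1/\alpha))$. This multiplicativity is false for a general bounded $f$: the quantity $\bigl|\wh{f}\bigl(a/\prod_{q\in S}q\bigr)\bigr|^2$ at a composite denominator does not decompose into level-one data unless $f$ is itself (a lift of) a product function across the CRT coordinates, which it is not. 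Moreover, conditions (i)--(ii) only bound $|\mc{Q}|$ from \emph{below}; nothing caps $|\mc{Q}|$ or $\sum_q(\text{local terms})$ by $O(\log(1/\alpha))$, so the proposed bookkeeping cannot produce the factor $(C_0\log(1/\alpha)/d)^d$. The difficulty is visible already at $d=1$: under the no-increment hypothesis, Parseval/large-sieve plus $|f|\le 1$ and $\mb{E}|f|\le\alpha$ only give $\sum_{q}\sum_{a\not\equiv 0}|\wh{f}(a/q)|^2 \ll \alpha X^2$, whereas the theorem asserts the much stronger $\ll \alpha^2\log(1/\alpha)\,X^2$; this is a "global level-one (Chang-type) inequality" and is already not provable by Parseval and pigeonhole. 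The same issue defeats the "reverse" direction you flag: failure of \eqref{main41} spread over $\binom{|\mc{Q}|}{d}\prod_{q\in S}q$ frequencies need not produce any single Fourier coefficient large enough to force a density increment of strength $2^{|S|}\alpha$ (doubling), rather than a mere $(1+c)$-increment.

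The paper's proof takes a different and essentially unavoidable route: it lifts $f$ to the group $G_{\mc{Q}}$ (\cref{lifting-def}), proves by induction on $d$ (\cref{key-inductive-claim}) that $W_d\Psi_{P,\mc{Q}}f$ is derivative-global whenever $f$ has no integer density increment, and then bounds $\Vert W_d \Psi f\Vert_2$ using the global hypercontractivity theorem of Keller--Lifshitz--Marcus (\cref{thm:input-hypercontractive}) with $p=2m\asymp \log(1/\alpha)/d$, together with \cref{lem32} to compensate for the sparsity of $[X]$ inside $G_{\mc{Q}}$. The hypercontractive input is precisely the substitute for the multiplicativity your argument assumes; without it (or some comparable tool such as the Keevash--Lifshitz--Long--Minzer machinery), the claimed bound is out of reach of the circle-method-plus-Parseval framework you describe.
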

\begin{remark}
At the cost of changing $C_0$, one may replace the condition of finding a progression of density $2^{|S|}\alpha$ with a progression of density $\lambda^{|S|}\alpha$ for any fixed $\lambda\ge 1$.
\end{remark}

The proof of \cref{thm:level-d-int} makes very substantial use of the results and ideas in a paper of Keller, Lifshitz and Marcus \cite{KLM23} concerning \emph{global hypercontractivity} in product spaces. This is the culmination of body of work, of which we mention in particular the earlier work of Keevash-Lifshitz-Long-Minzer \cite{kllm}. We remark in our context, a ``local'' function is one which strongly correlates with an arithmetic progression with common difference which is a product of ``few'' elements of $\mc{Q}$. A global function by contrast does not correlate strongly with such functions. We will introduce these techniques in the next section. 

We remark that when applying \cref{thm:level-d-int} to derive a density increment from arithmetic information such as a weighted average of Fourier coefficients $|\wh{1_A}(a/q)|^2$ being large, it has proven to be quantitatively advantageous to use many separate applications of \cref{thm:level-d-int} with different sets $\mc{Q}$. In particular, in our density increment argument we divide into different types of $q$ based on the structure of the prime factorization of $q$, and for each factorization type use a different $\mc{Q}$. Furthermore in our analysis it turns out to be advantageous to handle `small' prime factors of $q$ by breaking $[X]$ in various smaller subprogressions, and to employ a device we term `random sparsification' in order to effectively reduce the number of prime factors of $q$ in certain ranges.

\subsection{Structure of the paper}

The structure of the paper is as follows. In \cref{sec2}, we introduce the key concepts of global hypercontractivity and state the main result from \cite{KLM23} that we will use. In \cref{sec3} we set up the language of \emph{lifting}, which will ultimately allow us to deduce \cref{thm:level-d-int}, which concerns functions $f : [X] \rightarrow \C$, from the main results of \cite{KLM23}, which concern functions on large cyclic groups. In particular we prove a key lemma, \cref{lem32}, which will allow us to transfer the results effectively. In \cref{sec4} we prove \cref{thm:level-d-int}, relying heavily on the ideas in \cite[Section 5]{KLM23}, and also using \cite[Corollary 4.7]{KLM23} essentially as a black box. With \cref{thm:level-d-int} established, we turn to the proof of \cref{thm:main}. In \cref{sec5} we define a smooth weighting of the squares and prove some exponential sum estimates for it; this is essentially standard material. In \cref{sec6} we state the main density increment result, \cref{density-increment-calc}, and show that it implies \cref{thm:main}. This section will also be familiar to experts.

In \cref{sec7-new,sec8} we establish the density increment. In \cref{sec7-new} we show how various applications of \cref{thm:level-d-int} may be used to estimate pieces of the exponential sums that arise naturally when considering sets with no square difference. Here we give the details of the random sparsification device and the idea of handling small prime factors by passing to subprogressions, described at the end of the last subsection. Finally, in \cref{sec8} we complete the argument by piecing together the bounds from \cref{sec7-new}. This necessitates a careful analysis involving upper bounds for lattice points in certain polytopes.

To conclude the main part of the paper, in \cref{sec7} we make some self-contained comments about limitations to the density increment strategy for proving bounds on the Furstenberg-S\'ark\"ozy theorem. Finally, in \cref{appA} we remark on some slight adjustments that need to be made to the statement of \cite[Corollary 4.7]{KLM23} so that it fits our requirements.

\subsection{Notation} Most of our notation is very standard. We write $\mathbf{1}_S$ for the characteristic function of the set $S$, defined by $\mathbf{1}_S(x) = 1$ of $x \in S$ and $0$ otherwise. If $S$ is finite, and if $f : S \rightarrow \C$ is a function, we write $\mb{E}_S f$ or $\mb{E}_{x \in S} f(x)$ for $|S|^{-1} \sum_{x \in S} f(x)$.

For $x \in \R$, $\Vert x \Vert_{\R/\Z}$ denotes the distance to the nearest integer. As is standard, we let $e(\theta) = e^{2\pi i\theta}$.

We normalise the Fourier transform on finite abelian groups using the normalised counting measure on physical space and the counting measure in frequency space. That is, we define $\wh{f}(\chi) = \mb{E}_{x \in G} f(x) \overline{\chi(x)}$ for a character $\chi$ on $G$, and the inversion formula reads $f(x) = \sum_{\chi \in \wh{G}} \wh{f}(\chi) \chi(x)$. 

Similarly, if $G$ is a finite abelian group and if $f : G \rightarrow \C$ is a function then the $L^p$-norms are defined with respect to normalised counting measure, thus $\Vert f \Vert_p^p := \mb{E}_{x \in G} |f(x)|^p$ for $p \ge 1$.

If $\Sigma$ is a finite set, we denote by $B(\Sigma)$ the space of functions $f : \Sigma \rightarrow \C$.

We use standard asymptotic notation for functions on $\N$ throughout. Given functions $f=f(x)$ and $g=g(x)$, we write $f=O(g)$, $f\ll g$, or $g\gg f$ to mean that there is a constant $C$ such that $|f(x)|\le Cg(x)$ for sufficiently large $x$. Subscripts indicate dependence on parameters.

On many occasions we will see quantities such as $\exp(\log(1/\alpha)^C)$. This means $\exp\big( (\log(1/\alpha))^C\big)$ (which we believe is the natural interpretation), but to avoid notational clutter we suppress the extra pair of brackets.

\subsection{Acknowledgements}
MS thanks Dor Minzer and Dmitrii Zakharov for useful conversations.

We thank James Maynard, Sarah Peluse and Sean Prendiville for comments and corrections on the first version \cite{first-version} of the paper.

BG is supported by Simons Investigator Award 376201. This research was conducted during the period MS served as a Clay Research Fellow. 

\section{Statement of hypercontractivity results}\label{sec2}

Our main ingredient from other work will be a result on so-called global hypercontractivity from \cite{KLM23}. We state that result, in the language we will use in this paper, here. The general setup will be that we have a set $\mc{Q}$ of pairwise coprime positive integers, and we consider the product group
\[ G_{\mc{Q}} := \prod_{q \in \mc{Q}} \Z/q\Z \cong \Z/\big(\prod_{q \in \mc{Q}} q\big)\Z\] with the natural isomorphism given by the Chinese remainder theorem. The groups $\Z/q\Z$ will correspond to the sets $\Omega_i$ in \cite{KLM23}.  

We will consider several different linear operators from $B(G_{\mc{Q}})$ to itself. For many of them it seems most natural in our context to define them on the Fourier side, though in \cite{KLM23} (where the sets $\Omega_i$ need not have any group structure) the authors proceed combinatorially.

To define these operators, we fix some notation for the Fourier transform on $G_{\mc{Q}}$. We identify the dual of $\Z/q\Z$ with $\frac{1}{q}\Z /\Z \subset \R/\Z$, with $\theta \in \frac{1}{q}\Z/\Z$ corresponding to the character which, with a slight abuse of notation, we write $x \mapsto e(\theta x)$. We may then identify $\wh{G}_{\mc{Q}}$ with $\bigoplus_{q \in \mc{Q}} (\frac{1}{q}\Z/\Z) \subset \R/\Z$. If $\xi = \sum_{q \in \mc{Q}} \xi_q/q$ then we define $\Supp(\xi) := \{ q : \xi_q \neq 0 \md{q}\}$ and $|\xi| = |\Supp(\xi)|$.

All of our operators will be Fourier multiplier operators of the form $\Phi e(\xi \cdot) = w(\xi) e(\xi \cdot )$ for some $w(\xi) \in \R$. Specifically, for all $S \subseteq \mc{Q}$ we define
\begin{itemize}
    \item the \emph{averaging} operator $E_S$, with $w(\xi) = 1$ if $\xi_q = 0$ for all $q \in S$, and $w(\xi) = 0$ otherwise;
    \item the \emph{laplacian} operator $L_S$, with $w(\xi) = 1$ if $\xi_q \neq 0$ for all $q \in S$, and $w(\xi) = 0$ otherwise.
\end{itemize}
Moreover, we define
\begin{itemize}
    \item for integer $d$, the Fourier level $d$ operator $W_d$, with $w(\xi) = 1$ if $|\xi| = d$ and $w(\xi) = 0$ otherwise;
    \item for $\rho \in (0,1)$, the noise operator $T_{\rho}$, with $w(\xi) = \rho^{|\xi|}$.
\end{itemize}
\begin{remarks} These definitions are consistent with those in \cite{KLM23}. The action of these operators on general functions is easily computed using linearity and the Fourier inversion formula; for instance
\[ T_{\rho} f(x) = \sum_{\xi} \rho^{|\xi|}\wh{f}(\xi) e(\xi  x).\] Note that $W_0 f = \wh{f}(0) = \mb{E} f$ and that $W_d f = 0$ for $d < 0$.
\end{remarks}

We record some basic properties of these needed later in the paper. 

\begin{lemma}
We have
\begin{equation}\label{es-ls} L_S = \sum_{T\subseteq S}(-1)^{|T|}E_T,\end{equation}
and
\begin{equation}\label{avg-e} E_S f(x) = \mb{E}_{y \equiv x \mdsublem{\prod_{q \in \mc{Q} \setminus S} q}} f(y).\end{equation}
\end{lemma}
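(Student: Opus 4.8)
The plan is to treat both identities purely on the Fourier side, exploiting the fact that every operator in sight is a Fourier multiplier: if $\Phi,\Psi : B(G_{\mc{Q}}) \to B(G_{\mc{Q}})$ are multiplier operators with symbols $w_\Phi(\xi), w_\Psi(\xi)$, then $\Phi = \Psi$ as operators if and only if $w_\Phi \equiv w_\Psi$, by Fourier inversion. So to prove \eqref{es-ls} it suffices to compute the symbol of the right-hand side. From the definition, $E_T$ has symbol $w_{E_T}(\xi) = \prod_{q\in T}\mathbf{1}[\xi_q \equiv 0 \md{q}]$, whence
\[ \sum_{T\subseteq S}(-1)^{|T|}w_{E_T}(\xi) = \sum_{T\subseteq S}\prod_{q\in T}\big(-\mathbf{1}[\xi_q \equiv 0 \md{q}]\big) = \prod_{q\in S}\big(1 - \mathbf{1}[\xi_q \equiv 0 \md{q}]\big) = \prod_{q\in S}\mathbf{1}[\xi_q \not\equiv 0 \md{q}], \]
where the middle step is the standard expansion of a product over $q \in S$ into a sum over subsets $T \subseteq S$. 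The final expression is exactly the symbol of $L_S$, which gives \eqref{es-ls}.

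For \eqref{avg-e} I would first unwind the right-hand side as an average over a coset of a subgroup. Under the Chinese Remainder Theorem identification $G_{\mc{Q}}\cong \Z/\big(\prod_{q\in\mc{Q}}q\big)\Z$, the condition $y\equiv x \md{\prod_{q\in\mc{Q}\setminus S}q}$ says precisely that $y$ and $x$ have the same $\Z/q\Z$-coordinate for every $q\notin S$; equivalently $y \in x + H_S$, where $H_S \le G_{\mc{Q}}$ is the subgroup of elements supported on $S$. Averaging $f$ over this coset and applying Fourier inversion,
\[ \mb{E}_{y \in x + H_S} f(y) = \sum_{\xi}\wh f(\xi)\,e(\xi x)\,\mb{E}_{h\in H_S}e(\xi h) = \sum_{\xi \in H_S^{\perp}}\wh f(\xi)\,e(\xi x), \]
using that $\mb{E}_{h\in H_S}e(\xi h)$ equals $1$ when the character $e(\xi\cdot)$ is trivial on $H_S$ and $0$ otherwise. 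It then remains only to identify the annihilator: testing $e(\xi\cdot)$ against the generator of $H_S$ in a coordinate $q\in S$ shows that $e(\xi\cdot)|_{H_S}=1$ if and only if $\xi_q \equiv 0 \md{q}$ for every $q\in S$, i.e. $H_S^{\perp} = \{\xi : \xi_q \equiv 0 \md{q}\ \text{for all}\ q\in S\}$. Hence the displayed sum equals $\sum_{\xi:\,\xi_q = 0\,\forall q\in S}\wh f(\xi)\,e(\xi x) = E_S f(x)$ by the definition of $E_S$, which is \eqref{avg-e}.

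I do not anticipate a genuine obstacle: both parts are essentially bookkeeping once one commits to computing symbols, and the only mild care needed is in matching the combinatorial descriptions of $E_S$ and $L_S$ (given through their symbols) with their group-theoretic interpretations (a coset average, and inclusion--exclusion over $S$), together with tracking the CRT identification between ``vanishing in all coordinates outside $S$'' and ``divisibility by $\prod_{q\in\mc{Q}\setminus S}q$''. One could alternatively prove \eqref{avg-e} first and then derive \eqref{es-ls} from the classical inclusion--exclusion identity applied to the events ``$\xi_q \equiv 0 \md{q}$'' for $q\in S$, but the direct symbol computation above seems cleanest.
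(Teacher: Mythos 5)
Your proof is correct and follows essentially the same route as the paper: the first identity via the product/inclusion–exclusion expansion of the multiplier symbols, and the second by expanding $f$ on the Fourier side and noting that the average of $e(\xi\,\cdot)$ over the coset is $e(\xi x)$ when $\xi_q = 0$ for all $q \in S$ and $0$ otherwise. Your extra bookkeeping with the annihilator $H_S^\perp$ is just a more explicit phrasing of the same character-orthogonality step.
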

\begin{proof}
The statement \cref{es-ls} is immediate from the identity
\[ \prod_{q \in S} \big(1 - \mathbf{1}_{\xi_q = 0} \big) = \sum_{T \subseteq S} (-1)^T \prod_{q \in T} \mathbf{1}_{\xi_q = 0}.\]
To obtain \cref{avg-e}, we expand $f$ on the Fourier side and use that 
\[ \mb{E}_{y \equiv x \mdsublem{\prod_{q \in \mc{Q} \setminus S}q}} e(\xi y) = e(\xi  x) \] if $\xi_q = 0$ for all $q \in S$, and $0$ otherwise.
\end{proof}

To state the key global hypercontractivity result (\cref{thm:input-hypercontractive} below) we need the notions of \emph{global function} and \emph{noise operator}. The notion of a function being global (as considered in \cite{KLM23}) is what the authors of that paper call a `derivative-based' notion of globalness, which is somewhat different to the corresponding notions considered in \cite{kllm}. These notions however are closely related; see \cite[Lemma~4.9]{KLM23}. 
To define the derivative operators we first need to define specialisation operators. Let $S \subset \mc{Q}$ and let $x \in G_S$. Then we define the specialisation operator
\[ \sigma_{S \rightarrow x} : B(G_{\mc{Q}}) \rightarrow B(G_{\mc{Q} \setminus S}) \] via
\[ (\sigma_{S \rightarrow x}f)(y) = f(x, y)\] for $y \in G_{\mc{Q} \setminus S}$. (Here, of course, we have identified $G_{\mc{Q}} = G_S \times G_{\mc{Q} \setminus S}$ in the obvious way.)

Now we come to the key definition of \emph{derivatives}.

\begin{definition}
Let $S \subseteq \mc{Q}$ and $x \in G_S$. The we define the derivative operator $D_{S, x} : B(G_{\mc{Q}}) \rightarrow B(G_{\mc{Q} \setminus S})$ by $D_{S,x} = \sigma_{S \rightarrow x} L_S$. That is, $(D_{S, x} f)(y) := (L_S f)(x, y)$.
\end{definition}

To keep notation concise, we will often write $D_{S}$ for a derivative operator where the value of $x \in G_S$ can be arbitrary but whose exact nature is unimportant.

Finally we define the derivative-globalness which will be used to state the primary technical input from \cite{KLM23}. 

\begin{definition}\label{def:deriv-global}
Let $\mc{Q}$ be a set of pairwise coprime integers. Let $r \ge 0$. Let $f \in B(G_{\mc{Q}})$. We say that $f$ is $(r,\gamma)$-derivative-global if for all $S\subseteq \mc{Q}$ and $x\in G_S$ we have
\[\snorm{D_{S,x_S}f}_{2}\le r^{|S|}\gamma.\] 
\end{definition}
\begin{remarks}
We make the following comments:
\begin{enumerate}
  \item In \cite{KLM23} this notion is called $(r,\gamma)$-$L_2$-global, there also being notions of $L_p$-global for other values of $p$ (and also a notion of $(r,\gamma,d)$-$L_p$-global), and the authors choosing to not explicitly use the word `derivative' in this context. We do not need the notions with extra parameters here. 
  \item in the case $r = 0$, we should interpret $r^{0} = 1$; 
  \item If $|S| \ge 1$, any derivative $D_{S,x}$ of a constant function is zero. (This follows since $L_S$ of any constant function is zero.)
\end{enumerate}
\end{remarks}

We now state a key lemma concerning derivatives and Fourier weight operators. This is stated during the proof of \cite[Theorem~5.2]{KLM23}, but we give a proof for convenience. 

\begin{lemma}\label{eq:deriv}
Let $S \subseteq \mc{Q}$ be a set of size $r$, $x \in G_S$ and $d$ be a nonnegative integer. Then we have $D_{S,x} W_{d} = W_{d-r} D_{S,x}$.
\end{lemma}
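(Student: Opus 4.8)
The plan is to work entirely on the Fourier side, where all of the operators involved are diagonal (Fourier multiplier operators), so that the identity $D_{S,x}W_d = W_{d-r}D_{S,x}$ reduces to a bookkeeping statement about how the support size $|\xi|$ changes under the specialisation $\sigma_{S \to x}$. First I would recall that $D_{S,x} = \sigma_{S\to x}L_S$, and that $L_S$ kills every frequency $\xi$ unless $\xi_q \neq 0$ for all $q \in S$; in particular, after applying $L_S$ to any $f$, the remaining frequencies are exactly those supported fully on $S$ together with whatever they do on $\mc{Q}\setminus S$. So for a single character $e(\xi\cdot)$ with $\xi = \xi_S + \xi_{\mc{Q}\setminus S}$ (splitting according to $G_{\mc{Q}} = G_S \times G_{\mc{Q}\setminus S}$), one has $L_S e(\xi\cdot) = e(\xi\cdot)$ if $\Supp(\xi) \supseteq S$ and $0$ otherwise.

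Next I would compute the effect of $\sigma_{S\to x}$ on such a surviving character. Writing $\xi_S \cdot$ for the restriction to the $G_S$-coordinates, $\sigma_{S\to x}e(\xi\cdot)(y) = e(\xi_S\cdot x)\,e(\xi_{\mc{Q}\setminus S}\cdot y)$, i.e.\ specialisation multiplies by the scalar $e(\xi_S\cdot x)$ and leaves a character on $G_{\mc{Q}\setminus S}$ whose support is $\Supp(\xi) \setminus S$. Since $L_S$ has already forced $\Supp(\xi)\supseteq S$ and $|S| = r$, we get $|\Supp(\xi)\setminus S| = |\xi| - r$. Therefore, applying everything to a character $e(\xi\cdot)$: if $\Supp(\xi)\not\supseteq S$ then both $D_{S,x}W_d e(\xi\cdot)$ and $W_{d-r}D_{S,x}e(\xi\cdot)$ vanish; and if $\Supp(\xi)\supseteq S$, then $D_{S,x}W_d e(\xi\cdot) = \mathbf{1}_{|\xi|=d}\, e(\xi_S\cdot x)\, e(\xi_{\mc{Q}\setminus S}\cdot)$, while $W_{d-r}D_{S,x}e(\xi\cdot) = e(\xi_S\cdot x)\,W_{d-r}e(\xi_{\mc{Q}\setminus S}\cdot) = \mathbf{1}_{|\xi_{\mc{Q}\setminus S}| = d-r}\,e(\xi_S\cdot x)\,e(\xi_{\mc{Q}\setminus S}\cdot)$. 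These agree because $|\xi| = d \iff |\xi_{\mc{Q}\setminus S}| = d - r$ under the constraint $\Supp(\xi)\supseteq S$. By linearity and Fourier inversion, the two operators agree on all of $B(G_{\mc{Q}})$.

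I would then just need to be slightly careful about the boundary/degenerate cases to make the statement clean: when $d < r$ both sides should be zero (for $\Supp(\xi)\supseteq S$ one has $|\xi|\ge r$, so $W_d$ composed with $L_S$ is zero, consistent with $W_{d-r} = 0$ for negative index as noted in the remarks after the operator definitions), and the case $r = 0$ (so $S = \emptyset$, $L_S = \mathrm{id}$, $\sigma_{S\to x} = \mathrm{id}$) is trivial. None of this is really an obstacle; the only thing to get right is the normalisation convention for the splitting $\xi = \xi_S + \xi_{\mc{Q}\setminus S}$ and the identification $\wh{G}_{\mc{Q}} = \bigoplus_q (\tfrac1q\Z/\Z)$, together with the convention $W_{d} = 0$ for $d<0$. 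In short, the entire proof is the observation that $L_S$ enforces $S \subseteq \Supp(\xi)$, specialisation strips off exactly those $r$ coordinates, and hence the Fourier level drops by exactly $r$; this is purely formal once stated on the Fourier side, which is why I expect no genuine difficulty—the "hard part" is merely presenting the index bookkeeping transparently rather than any substantive estimate.
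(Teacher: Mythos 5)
Your proposal is correct and follows essentially the same route as the paper: by linearity it suffices to check the identity on characters $e(\xi\cdot)$, where $L_S$ enforces $S \subseteq \Supp(\xi)$, specialisation strips off the $S$-coordinates (contributing the scalar $e(\xi_S \cdot x)$), and the identity reduces to $\mathbf{1}_{S \subseteq \Supp(\xi)}\mathbf{1}_{|\xi| = d} = \mathbf{1}_{S \subseteq \Supp(\xi)}\mathbf{1}_{|\xi_{\mc{Q}\setminus S}| = d - r}$. Your handling of the degenerate cases ($d < r$, $S = \emptyset$) is consistent with the paper's convention $W_d = 0$ for $d < 0$, so there is nothing to add.
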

\begin{proof}
Since all operators are linear, it suffices to check this when $f = e(\xi \cdot)$ for some $\xi$. Write $\xi' = (\xi_q)_{q \in S}$ and $\xi'' = (\xi_q)_{q \in \mc{Q} \setminus S}$. Writing $G_{\mc{Q}} = G_S \times G_{\mc{Q} \setminus S}$ as before, we have $e(\xi \cdot (x, y)) = e(\xi' x) e(\xi'' y)$. Write $\chi_{\xi} = e(\xi \cdot)$ and similarly for $\chi_{\xi'}, \chi_{\xi''}$. We have
\begin{equation}\label{der-1} D_{S,x} W_{d} \chi_{\xi}(y) = L_S W_{d}\chi_{\xi}(x,y)  = \mathbf{1}_{S \subseteq \Supp(\xi)} \mathbf{1}_{|\xi| = d} \chi_{\xi}(x,y).\end{equation} On the other hand note that 
\[ D_{S,x} \chi_{\xi}(y) = L_S \chi_{\xi}(x,y) = \mathbf{1}_{S \subseteq \Supp(\xi)}\chi_{\xi}(x,y) = \mathbf{1}_{S \subseteq \Supp(\xi)}\chi_{\xi'}(x) \chi_{\xi''}(y),\] so
\begin{equation}\label{der-2} W_{d - r} D_{S,x}\chi_{\xi}(y) = \mathbf{1}_{S \subseteq \Supp(\xi)}\chi_{\xi'}(x) \mathbf{1}_{|\xi''| = d - r} \chi_{\xi''}(y) = \mathbf{1}_{S \subseteq \Supp(\xi)} \mathbf{1}_{|\xi''| = d - r} \chi_{\xi}(x,y). \end{equation}
Comparing \cref{der-1} and \cref{der-2}, the result now follows from the observation that $\mathbf{1}_{S \subseteq \Supp(\xi)} \mathbf{1}_{|\xi| = d} = \mathbf{1}_{S \subseteq \Supp(\xi)} \mathbf{1}_{|\xi''| = d - r}$.
\end{proof}

The following is the main input from \cite{KLM23} that we will need. It is (essentially, as discussed in the remarks below, and in more detail in \cref{appA}) Corollary~4.7 (1.) in that paper. We remark that if one is aiming for only some quasi-polynomial bound in \cref{thm:main}, then \cite[Theorem~7.10]{kllm} could be used in place of this result. Note, however, that we also use ideas from Section 5 of \cite{KLM23}.

\begin{theorem}\label{thm:input-hypercontractive}
Let $r,\gamma>0$, and $p\ge 2$. Suppose that $f \in B(G_{\mc{Q}})$ is an $(r,\gamma)$-derivative-global function. Suppose that
\begin{equation}\label{rho-cond-23} \rho \le \frac{\min(r^{-(p-2)/p}p^{-1}, p^{-1/2})}{3\sqrt{2}}.\end{equation}
Then we have 
\[\snorm{T_{\rho} f}_p^p\le \snorm{f}_2^2\gamma^{p-2}.\]
\end{theorem}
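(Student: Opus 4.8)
The final statement (Theorem~\ref{thm:input-hypercontractive}) is a global hypercontractivity inequality: for an $(r,\gamma)$-derivative-global function $f$ on the product group $G_{\mc{Q}}$, one controls $\snorm{T_\rho f}_p^p$ by $\snorm{f}_2^2 \gamma^{p-2}$, provided $\rho$ is small enough in terms of $r$ and $p$. This is quoted as essentially Corollary~4.7(1.) of \cite{KLM23}, so my "proof" is really a reduction to that black box, checking that the definitions line up and that the constants in \eqref{rho-cond-23} are what \cite{KLM23} requires.

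**Plan for the reduction.** The plan is to invoke \cite[Corollary~4.7]{KLM23} directly, after matching three things: (i) the group-theoretic setup (our $G_{\mc{Q}} = \prod_{q\in\mc{Q}} \Z/q\Z$ corresponds to the product space $\prod_i \Omega_i$ of \cite{KLM23} with $\Omega_i = \Z/q_i\Z$ and uniform measure, as already noted in \cref{sec2}); (ii) the notion of globalness (our \cref{def:deriv-global} of $(r,\gamma)$-derivative-global is, by the remark following it, exactly their $(r,\gamma)$-$L_2$-global notion, since $D_{S,x} = \sigma_{S\to x} L_S$ matches their derivative operator); and (iii) the noise operator $T_\rho$, whose Fourier multiplier $\rho^{|\xi|}$ coincides with theirs. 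With these identifications, Corollary~4.7(1.) of \cite{KLM23} states an inequality of the shape $\snorm{T_\rho f}_p^p \le \snorm{f}_2^2 \gamma^{p-2}$ under a hypothesis $\rho \le c(r,p)$; I would quote that corollary and observe that the explicit bound in \eqref{rho-cond-23}, namely $\rho \le \frac{1}{3\sqrt2}\min(r^{-(p-2)/p} p^{-1}, p^{-1/2})$, implies (or is) their hypothesis — this is where the numerical constant $3\sqrt2$ and the two-term minimum come from in their statement.

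**The main obstacle.** The real work, as the excerpt flags explicitly ("we remark on some slight adjustments... in \cref{appA}"), is that \cite[Corollary~4.7]{KLM23} is not stated in a form that literally matches what is needed here — the discrepancies are likely in: the precise normalization of norms (counting vs.\ normalized counting measure), whether their statement is for $(r,\gamma,d)$-global or plain $(r,\gamma)$-$L_2$-global functions, the exact shape of the constant in the $\rho$-condition, and possibly an extra additive or multiplicative slack that needs to be absorbed. So I expect the bulk of a careful write-up to consist of: (a) recalling the exact statement of \cite[Corollary~4.7(1.)]{KLM23}; (b) explaining, with reference to \cref{appA}, the minor modifications needed so that it applies with our normalizations and our \cref{def:deriv-global}; and (c) a short arithmetic check that \eqref{rho-cond-23} is at least as strong as the resulting hypothesis on $\rho$. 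The conceptual content is entirely in \cite{KLM23}; the labor here is bookkeeping to bridge conventions.

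**What I would actually write.** Thus the proof I would present is short: \emph{This is Corollary~4.7(1.) of \cite{KLM23}, applied with the product space $\prod_{q\in\mc{Q}}\Z/q\Z$ (uniform measure on each factor), after the translation of notation set up in \cref{sec2} and with the minor adjustments to the statement detailed in \cref{appA}; the hypothesis \eqref{rho-cond-23} is precisely the condition on $\rho$ required there.} I would not attempt to reprove the hypercontractivity estimate itself, since that is the deep external input and reproducing it is out of scope for this paper.
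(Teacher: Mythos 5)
Your overall strategy — treat this as a citation of \cite[Corollary~4.7]{KLM23} after matching the setup of \cref{sec2} — is indeed the paper's strategy, but your write-up stops exactly where the paper's actual proof begins, and it misidentifies what the necessary adjustments are. The two genuine discrepancies are not normalization of norms, nor the shape of the constant in \cref{rho-cond-23} (which is taken verbatim from \cite{KLM23}), nor the $(r,\gamma,d)$ variant: they are (i) \cite[Corollary~4.7]{KLM23} is stated for \emph{real-valued} functions, while here $f$ is complex-valued, and (ii) it is stated for a power $\Omega^n$ of a single space, while $G_{\mc{Q}} = \prod_{q\in\mc{Q}}\Z/q\Z$ is a product of \emph{distinct} factors. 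Deferring these to \cref{appA} is circular in a blind proof: that appendix \emph{is} the proof you were asked to supply, and without its content your argument reduces to a promise that the hypotheses can be matched.

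Concretely, the missing content is two short but nontrivial reductions. For the complex case, one writes $f = \Re f + i\,\Im f$, notes each part is $(r,\gamma)$-$L_2$-global, and applies the real-valued result to each; the elementary bound $|x+iy|^p \le 2^{p/2-1}(|x|^p+|y|^p)$ then gives $\snorm{T_\rho f}_p^p \le 2^{p/2-1}\snorm{f}_2^2\gamma^{p-2}$, and the spurious constant $2^{p/2-1}$ is removed by a tensor-power trick: apply the inequality to $f^{\otimes m}$ (which is $(r,\gamma^m)$-$L_2$-global, with $\snorm{T_\rho(f^{\otimes m})}_p = \snorm{T_\rho f}_p^m$ and $\snorm{f^{\otimes m}}_2 = \snorm{f}_2^m$), take $m$-th roots and let $m\to\infty$. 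For the distinct-factor case, one embeds a function $f$ on $\Omega_1\times\cdots\times\Omega_n$ as $\wt f$ on $(\Omega_1\times\cdots\times\Omega_n)^n$ depending only on the $i$-th coordinate of the $i$-th block; this preserves $(r,\gamma)$-$L_2$-globalness, $\snorm{T_\rho\cdot}_p$ and $\snorm{\cdot}_2$, so the result for powers of a single space applies. Without these steps (or some equivalent argument), the deduction of the theorem from \cite[Corollary~4.7]{KLM23} is incomplete.
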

\begin{remarks}
\cref{thm:input-hypercontractive} is a version of \cite[Corollary~4.7]{KLM23} with the product space $\Omega^n$ replaced by $\Omega_1 \times \cdots \times \Omega_n$, where the $\Omega_i$ are the factors $\Z/q\Z$ of $G_{\mc{Q}}$. Though the results of \cite{KLM23} are only stated in the case $\Omega_1 = \cdots = \Omega_n = \Omega$, all the definitions and proofs adapt routinely to the case of distinct factors. Also, \cite[Corollary~4.7]{KLM23} is stated for functions taking values in $\R$, whereas we need it for functions taking values in $\C$. Again, the proof can be straightforwardly modified. For completeness, in \cref{appA} we give a deduction of the statements we require from the actual statement of \cite[Corollary~4.7]{KLM23}, using simple tensor power tricks.

In our application, we choose parameters so that $p$ is an even integer and to balance the two terms in the condition on $\rho$. Specifically, one may check that if one takes $m := \lceil r^{-2}\rceil$, $p := 2m$ and $\rho := \frac{1}{20} m^{-1/2}$ then the condition \cref{rho-cond-23} is satisfied for $m \ge 4$.
\end{remarks}

\section{Liftings}\label{sec3}

The hypercontractive results stated in the previous section take place in the context of finite (and possibly large) cyclic groups $G_{\mc{Q}}$. To use them to prove results about the integers, we need to do some lifting. 

\begin{definition}\label{lifting-def}
Let $X \ge 1$ be a parameter. Let $\mc{Q}$ be a set of pairwise coprime positive integers. Denote by $\pi_{\mc{Q}} : \Z \rightarrow G_{\mc{Q}}$ the natural projection map. Suppose that $P \subseteq [X]$ is an arithmetic progression with common difference coprime to all $q \in \mc{Q}$. Then we define the lifting map $\Psi_{P,\mc{Q}} : B(P) \rightarrow B(G_{\mc{Q}})$ by
\[ (\Psi_{P,\mc{Q}} f)((t_q)_{q \in \mc{Q}}) =  |P|^{-1}|G_{\mc{Q}}| \sum_{\substack{x \in P \\ x \equiv t_q \mdsub{q} \\ q \in \mc{Q}}} f(x).\]
\end{definition}
\begin{remark}
The normalization is chosen so that 
\begin{equation}\label{normalization} \mb{E}_{G_{\mc{Q}}} (\Psi_{P,\mc{Q}} f) = \mb{E}_{P} f.\end{equation}
\end{remark}

The following lemma details the relation between restrictions and lifts.

\begin{lemma}\label{cd-lem}
Let $X \ge 1$ be a parameter. Let $P \subseteq [X]$ be an arithmetic progression of length $\ge X^{3/4}$. Let $\mc{Q}$ be a set of pairwise coprime positive integers. Suppose that the common difference of $P$ is coprime to all $q \in \mc{Q}$. Let $S \subseteq \mc{Q}$ be such that $(\max_{q \in \mc{Q}} q)^{|S|} \le X^{1/4}$, and suppose that $T \subseteq S$. Let $a \in G_S$. Denote $\mc{Q}' := \mc{Q} \setminus S$, and let $P' := \{ x \in P : x \equiv a \md{\prod_{q \in S \setminus T} q}\}$. Then there is $\eta$, $|\eta| \le X^{-1/2}$, such that all the maps in the following diagram
\[
\begin{tikzcd}
B(P) \arrow[dd, "(1 + \eta)\iota_*", shift right] \arrow[r] \arrow[r, "\Psi_{P,\mc{Q}}", no head] & B(G_{\mc{Q}}) \arrow[d, "E_T"] \\ & B(G_{\mc{Q}}) \arrow[d, "\sigma_{S \rightarrow a}"] \\
B(P') \arrow[r, "\Psi_{P', \mc{Q}'}"] & B(G_{\mc{Q}'})           
\end{tikzcd}
\]
are well-defined, and the diagram commutes. Here $\iota_*$ is the restriction map induced by the inclusion $\iota : P' \hookrightarrow P$.
\end{lemma}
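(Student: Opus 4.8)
The plan is to verify the commutativity of the diagram at the level of delta functions (or, essentially equivalently, on the Fourier side), tracking constants carefully to locate the error term $\eta$. First I would observe that the only place an approximation enters is the left vertical map $(1+\eta)\iota_*$: the issue is that $\Psi_{P,\mc Q}$ and $\Psi_{P',\mc Q'}$ involve the normalising factors $|P|^{-1}|G_{\mc Q}|$ and $|P'|^{-1}|G_{\mc Q'}|$ respectively, and these do not match exactly under restriction because $|P'|$ is only approximately $|P| \cdot (\prod_{q \in S\setminus T} q)^{-1}$ (a progression of length $|P|$ intersected with a residue class mod $\prod_{q\in S\setminus T}q$ has size within $1$ of $|P|/\prod_{q\in S\setminus T}q$). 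So I would set
\[
1 + \eta := \frac{|P'| \cdot \prod_{q \in S \setminus T} q}{|P|},
\]
and check $|\eta| \le X^{-1/2}$ using $|P| \ge X^{3/4}$ and $(\max_q q)^{|S|} \le X^{1/4}$, which bounds $\prod_{q \in S\setminus T} q \le X^{1/4}$; then the discrepancy $|\,|P'| - |P|/\prod_{q\in S\setminus T}q\,| \le 1$ gives $|\eta| \le \prod_{q\in S\setminus T}q / |P| \le X^{1/4}/X^{3/4} = X^{-1/2}$.

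Next I would dispense with well-definedness. Each lifting map is well-defined provided the relevant product of moduli exceeds the length of the relevant progression, so that $\pi$ is injective on it: for $\Psi_{P,\mc Q}$ this is $\prod_{q\in\mc Q} q \ge X^{5/4} \ge X \ge |P|$, and for $\Psi_{P',\mc Q'}$ we need $\prod_{q \in \mc Q'} q = \prod_{q\in\mc Q}q / \prod_{q \in S} q \ge X^{5/4}/X^{1/4} = X \ge |P'|$, using again $(\max_q q)^{|S|} \le X^{1/4}$ (hence $\prod_{q\in S}q \le X^{1/4}$). The operators $E_T$ and $\sigma_{S\to a}$ are defined on all of $B(G_{\mc Q})$, and $\iota_*$ is just restriction of functions from $P$ to $P' \subseteq P$, so all maps make sense.

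For commutativity, I would trace a basis function. Take $f \in B(P)$ and compute $\sigma_{S\to a} E_T \Psi_{P,\mc Q} f$, a function on $G_{\mc Q'}$. Using \cref{avg-e}, $E_T \Psi_{P,\mc Q} f$ at a point is the average of $\Psi_{P,\mc Q} f$ over the coset of the subgroup $\prod_{q \in \mc Q \setminus T}$-torsion — i.e. over lifts of points congruent mod $\prod_{q\in T} q$; since $\Psi_{P,\mc Q}f$ is supported on $\pi_{\mc Q}(P)$ and $\prod_{q\in\mc Q}q \ge X^{5/4} > X$, at most one lattice point in each such average lies in $P$, so the average picks out $|P|^{-1}|G_{\mc Q}| f(x)$ divided by the coset size $\prod_{q \in \mc Q\setminus T} q$ when there is such a point, and $0$ otherwise. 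Then applying $\sigma_{S\to a}$ restricts to those points whose $S$-coordinate is $a$; combining the "congruent mod $\prod_{q\in T}q$" condition from $E_T$ with the "$S$-coordinate equal to $a$" condition, the surviving $x$ are exactly those in $P$ with $x \equiv a \md{\prod_{q \in S\setminus T} q}$ (and with a free residue mod $\prod_{q \in T}q$ coming from the support of the original point in $G_{\mc Q'}$) — this is precisely the condition defining $P'$. A bookkeeping comparison of the three normalising scalars — $|P|^{-1}|G_{\mc Q}|$ from $\Psi_{P,\mc Q}$, the factor $\big(\prod_{q\in\mc Q\setminus T}q\big)^{-1}$ from $E_T$, $|G_{\mc Q}| = |G_{\mc Q'}|\cdot \prod_{q\in S}q$, and the target normalisation $|P'|^{-1}|G_{\mc Q'}|$ from $\Psi_{P',\mc Q'}$ — then shows the two composites agree up to exactly the factor $(1+\eta)$ defined above, i.e. $\sigma_{S\to a} E_T \Psi_{P,\mc Q} = \Psi_{P',\mc Q'} \circ (1+\eta)\iota_*$.

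The main obstacle is purely the careful bookkeeping of the normalisation constants and of which residue conditions survive the composition $\sigma_{S\to a}\circ E_T$: one must keep straight that $E_T$ averages out the $T$-coordinates (so the output "forgets" them) while $\sigma_{S\to a}$ fixes the full $S$-coordinate, and reconcile this with the fact that $P'$ is cut out only by the $\md{\prod_{q\in S\setminus T}q}$ condition. I would organise this by working one coordinate block at a time via the CRT splitting $G_{\mc Q} = G_T \times G_{S\setminus T} \times G_{\mc Q'}$. No step is deep; the only genuinely quantitative estimate is the $|\eta|\le X^{-1/2}$ bound, which follows from the stated hypotheses as above.
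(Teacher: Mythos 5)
Your overall route is the same as the paper's: check commutativity on delta functions (by linearity), identify $1+\eta$ as the mismatch between the two lifting normalisations, and bound $|\eta|$ using $\big||P'| - |P|/\prod_{q\in S\setminus T}q\big|\le 1$ together with $|P|\ge X^{3/4}$ and $\prod_{q\in S\setminus T}q\le X^{1/4}$; your definition of $\eta$ and the bound $|\eta|\le X^{-1/2}$ agree exactly with the paper's. However, there is a concrete error in your description of $E_T$ which, as written, breaks the bookkeeping you invoke. By \cref{avg-e}, $E_T g(x)$ is the average of $g$ over $\{y: y\equiv x \md{\prod_{q\in\mc{Q}\setminus T}q}\}$, a coset of size $|G_T|=\prod_{q\in T}q$: it is the $T$-coordinates that get averaged out, with the remaining coordinates held fixed. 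You instead describe the average as being over points congruent mod $\prod_{q\in T}q$ and divide by a ``coset size'' of $\prod_{q\in\mc{Q}\setminus T}q$. With your factor, the composite $\sigma_{S\to a}E_T\Psi_{P,\mc{Q}}\delta_b$ carries the scalar $|G_{\mc{Q}}|/(|P|\prod_{q\in\mc{Q}\setminus T}q)=|G_T|/|P|$, whereas the other path carries $(1+\eta)\,|G_{\mc{Q}'}|/|P'| = \prod_{q\in\mc{Q}\setminus T}q/|P|$ with your $\eta$; these differ by the factor $\prod_{q\in\mc{Q}\setminus T}q/\prod_{q\in T}q$, so the claim that the composites ``agree up to exactly $(1+\eta)$'' does not follow from the numbers you wrote. (The same swap infects your description of which congruence conditions survive: the support condition produced by $E_T$ is $y\equiv b \md{\prod_{q\in\mc{Q}\setminus T}q}$, which together with fixing the $S$-coordinates to $a$ decouples into $x\equiv b\md{\prod_{q\in\mc{Q}'}q}$ and $b\equiv a \md{\prod_{q\in S\setminus T}q}$.) The fix is simply to use the correct coset size $|G_T|$; then the first path carries $|G_{\mc{Q}}|/(|P||G_T|) = \prod_{q\in\mc{Q}\setminus T}q/|P|$ and everything matches your $\eta$, exactly as in the paper's computation.

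A smaller omission: well-definedness of $\Psi_{P',\mc{Q}'}$ requires, per \cref{lifting-def}, not only $\prod_{q\in\mc{Q}'}q\ge X$ (which you check via injectivity) but also that $P'$ is an arithmetic progression whose common difference is coprime to every $q\in\mc{Q}'$. This is immediate—the common difference of $P'$ is $d\prod_{q\in S\setminus T}q$ with $d$ the common difference of $P$, and coprimality follows from the hypotheses and the pairwise coprimality of $\mc{Q}$—but it is part of what ``well-defined'' means here, and the paper verifies it explicitly.
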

\begin{proof} 
We first verify that all maps are well-defined. The only nontrivial point here is to check that $\Psi_{P', \mc{Q}'}$ is well-defined. If the common difference of $P$ is $d$  (coprime to all $q \in \mc{Q}$) then the common difference of $P'$ is $d \prod_{q \in S \setminus T} q$, which is coprime to all $q \in \mc{Q}' = \mc{Q} \setminus S$.

Now we check the diagram commutes. All maps are linear so it suffices to check that the diagram commutes for functions $f \in B(P)$ of the form $f = \delta_b$, that is to say $f(b) = 1$ and $f(x) = 0$ for $x \neq b$. We first note that $\Psi_{P,\mc{Q}} f = |P|^{-1}|G_{\mc{Q}}|\delta_{\pi_{\mc{Q}}(b)}$. From this and \cref{avg-e} we compute that 
\[ E_T \Psi_{P, \mc{Q}} f = \frac{|G_{\mc{Q}}|}{|P||G_T|} \sum_{\substack{y \in G_{\mc{Q}} \\ y \equiv b \mdsub{\prod_{q \in \mc{Q} \setminus T} q} }} \delta_y.\]
Now suppose that $x \in G_{\mc{Q}'}$; then from the preceding we have
\[ \sigma_{S \rightarrow a} E_T \Psi_{P, \mc{Q}} (x) = E_T \Psi_{P, \mc{Q}} (x, a) = \frac{|G_{\mc{Q}}|}{|P||G_T|} \sum_{\substack{(x,a) \in G_{\mc{Q}} = G_{\mc{Q}'} \times G_S \\ (x,a) \equiv b \mdsub{\prod_{q \in \mc{Q} \setminus T} q} }} \delta_{(x,a)}. \]  The congruence condition decouples to the two conditions 
\[ x \equiv b \md{\prod_{q \in \mc{Q}'} q} \quad \mbox{and} \quad b \equiv a \md{\prod_{q \in S \setminus T} q}.\] Thus we conclude that 
\[ \sigma_{S \rightarrow a} E_T \Psi_{P, \mc{Q}}f = \frac{|G_{\mc{Q}}|}{|P||G_T|} \delta_{\pi_{\mc{Q}'} (b)}\] if $b \equiv a \md{\prod_{q \in S \setminus T} q}$, and $0$ otherwise. Traversing the diagram the other way, we see that 
\[ \Psi_{P', \mc{Q}'} \iota_* f = \frac{|G_{\mc{Q}'}|}{|P'|} \delta_{\pi_{\mc{Q}'}(b)}\] if $b \equiv a \md{\prod_{q \in S \setminus T} q}$, and $0$ otherwise.

Therefore the result holds if we define $\eta$ by
\[ (1 + \eta) \frac{|G_{\mc{Q}'}|}{|P'|} = \frac{|G_{\mc{Q}}|}{|P||G_T|}.\]
To complete the proof note that $\big| |P'| - |G_{S \setminus T}|^{-1}|P|\big| \le 1$ (here we use the fact that all $q \in S \setminus T$ are coprime to the common difference of $P$), so 
\[ |\eta| \le \frac{|G_{\mc{Q}}|}{|P||G_T||G_{\mc{Q}'}|} \le \frac{|G_S|}{|P|} \le \frac{(\max_{q \in \mc{Q}} q)^{|S|}}{|P|} \le X^{-1/2} .\]

\end{proof}

We record a lemma about Fourier coefficients and lifts (in the case $P = [X])$.

\begin{lemma}\label{fourier-lifts}
Let $X \ge 1$ be a parameter, and let $\mc{Q}$ be a set of pairwise coprime positive integers. Let $P = [X]$, let $f \in B(P)$, and let $\Psi_{P, \mc{Q}} f$ be the lift of $f$ to $B(G_{\mc{Q}})$. Let $\xi \in \wh{G}_{\mc{Q}}$, identified with $\bigoplus_{q \in \mc{Q}} (\frac{1}{q} \Z/\Z)$ as explained in the previous section. Then 
\[ \wh{\Psi_{P,\mc{Q}} f}(\xi) = \mb{E}_{x \in [X]} f(x) e(-\xi x).\]
\end{lemma}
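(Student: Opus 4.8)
The statement to prove is Lemma~\ref{fourier-lifts}, relating the Fourier transform on $G_{\mc{Q}}$ of the lift $\Psi_{P,\mc{Q}}f$ (when $P = [X]$) to the ordinary exponential sum $\mb{E}_{x\in[X]} f(x) e(-\xi x)$.

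The plan is to unwind both sides of the claimed identity directly from the definitions. Recall from Definition~\ref{lifting-def} that with $P = [X]$, the lift satisfies $(\Psi_{P,\mc{Q}}f)(\pi_{\mc{Q}}(x)) = |P|^{-1}|G_{\mc{Q}}| f(x)$ for $x \in P = [X]$ and is $0$ off the image $\pi_{\mc{Q}}([X])$ (which is well-defined and injective since $\prod_{q\in\mc{Q}}q \ge X$). On the other hand, the Fourier transform on $G_{\mc{Q}}$ uses normalised counting measure in physical space, so $\wh{\Psi_{P,\mc{Q}}f}(\xi) = \mb{E}_{g \in G_{\mc{Q}}} (\Psi_{P,\mc{Q}}f)(g) \overline{\chi_\xi(g)}$, where $\chi_\xi$ is the character corresponding to $\xi \in \wh{G}_{\mc{Q}} \cong \bigoplus_{q}(\tfrac1q\Z/\Z)$.

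The key step is then just bookkeeping: the sum over $g \in G_{\mc{Q}}$, being a sum of a function supported on the $|P| = X$ points $\pi_{\mc{Q}}(x)$, $x \in [X]$, collapses to $\frac{1}{|G_{\mc{Q}}|}\sum_{x\in[X]} (\Psi_{P,\mc{Q}}f)(\pi_{\mc{Q}}(x)) \overline{\chi_\xi(\pi_{\mc{Q}}(x))}$. Substituting the definition of the lift turns this into $\frac{1}{|G_{\mc{Q}}|}\cdot \frac{|G_{\mc{Q}}|}{X}\sum_{x\in[X]} f(x)\overline{\chi_\xi(\pi_{\mc{Q}}(x))}$. Finally one identifies $\chi_\xi(\pi_{\mc{Q}}(x))$ with $e(\xi x)$: under the identification of $\wh{G}_{\mc{Q}}$ with $\bigoplus_q (\tfrac1q\Z/\Z)\subset\R/\Z$ described in Section~\ref{sec2}, the character attached to $\xi = \sum_q \xi_q/q$ evaluated at $\pi_{\mc{Q}}(x)$ is $\prod_q e(\xi_q x/q) = e(\xi x)$ (this is consistent modulo $1$ because each $\xi_q/q$ only depends on $x \bmod q$). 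Hence $\overline{\chi_\xi(\pi_{\mc{Q}}(x))} = e(-\xi x)$, and the $|G_{\mc{Q}}|$ factors cancel, leaving $\frac{1}{X}\sum_{x\in[X]} f(x) e(-\xi x) = \mb{E}_{x\in[X]} f(x) e(-\xi x)$, as required.

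There is no real obstacle here; the lemma is essentially a normalisation check. The only points requiring minor care are (i) confirming that $\pi_{\mc{Q}}$ restricted to $[X]$ is injective, so that the lift is well-defined and the push-forward sum has exactly $X$ nonzero terms — this is exactly the hypothesis $\prod_{q\in\mc{Q}}q \ge X$; and (ii) making sure the normalisation factor $|P|^{-1}|G_{\mc{Q}}| = |G_{\mc{Q}}|/X$ in the definition of the lift cancels correctly against the $|G_{\mc{Q}}|^{-1}$ coming from the normalised counting measure in the Fourier transform. Both are immediate, so the proof will be short.
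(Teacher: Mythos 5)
Your proof is correct and matches the paper's approach: the paper simply states that the lemma follows by chasing through the definitions, and your write-up is exactly that computation carried out explicitly (unwinding the normalised Fourier transform, substituting the lift's normalisation $|P|^{-1}|G_{\mc{Q}}|$, and identifying $\chi_\xi(\pi_{\mc{Q}}(x))$ with $e(\xi x)$).
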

\begin{proof} This is little more than chasing through the definitions.\end{proof}

A crucial ingredient in our arguments will be the following.

\begin{lemma}\label{lem32}
Suppose that $\mc{Q}$ is a set of pairwise coprime integers. Let $P \subseteq [X]$ be an arithmetic progression with length at least $X^{3/4}$ and common difference coprime to all $q \in \mc{Q}$. Let $d, m \ge 1$ be integers. Write $\mu$ for the pushforward of the normalised counting measure on $P$ under the natural projection $\pi_{\mc{Q}} : \Z \rightarrow G_{\mc{Q}}$: that is, $\mu$ is the measure on $G_{\mc{Q}}$ defined by $\mu(t)$ is $|P|^{-1}$ times the number of $x \in P$ with $x \equiv t \md{\prod_{q \in \mc{Q}} q}$. Let $f \in B(P)$ be a function with $\mb{E}_{P} |f| = \alpha$, and suppose that $(\max_{q \in \mc{Q}} q)^{dm} \le X^{1/16}$. Write $g := \Psi_{P,\mc{Q}} f \in B(G_{\mc{Q}})$ for the lift as defined in \cref{lifting-def}, and write $W_d$ for the Fourier level $d$ operator on $B(G_{\mc{Q}})$ as described at the beginning of \cref{sec2}. Then
\[  \int |W_{d} g|^{2m} d\mu  \le \mb{E}_{ G_{\mc{Q}}} |W_{d} g|^{2m} + \alpha^{2m} X^{-1/4}.\]
\end{lemma}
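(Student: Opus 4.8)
The plan is to compare the average of $|W_d g|^{2m}$ over the subset $\Gamma = \pi_{\mc{Q}}(P)$ with the average over the full group $G_{\mc{Q}}$, and to show the difference is negligible. The key point is that $g$ is supported on $\Gamma$, while $W_d g$ may have small support outside $\Gamma$; we need to control the mass of $W_d g$ off $\Gamma$. Since $|\Gamma| = |P| \ge X^{3/4}$ while $|G_{\mc{Q}}| \ge X$, we have $|\Gamma|/|G_{\mc{Q}}| \le X^{-1/4} \cdot (|G_{\mc{Q}}|/X)^{\text{?}}$ — actually the ratio $|\Gamma|/|G_{\mc{Q}}|$ can be tiny, which works in our favour: averaging over all of $G_{\mc{Q}}$ versus over $\Gamma$ introduces a factor governed by how much $|W_d g|^{2m}$ concentrates on $\Gamma$. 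So the first step is to write
\[ \mb{E}_{\Gamma} |W_d g|^{2m} = \frac{|G_{\mc{Q}}|}{|\Gamma|} \mb{E}_{G_{\mc{Q}}} \big( \mathbf{1}_{\Gamma} |W_d g|^{2m}\big), \]
and hence it suffices to bound $\mb{E}_{G_{\mc{Q}}}\big( \mathbf{1}_{\Gamma} |W_d g|^{2m}\big) - \frac{|\Gamma|}{|G_{\mc{Q}}|}\mb{E}_{G_{\mc{Q}}} |W_d g|^{2m}$, i.e. to compare the $2m$-th moment of $W_d g$ restricted to $\Gamma$ against its global average scaled by the density of $\Gamma$.

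Second, I would expand $W_d g$ on the Fourier side: $W_d g = \sum_{|\xi| = d} \wh{g}(\xi) e(\xi \cdot)$, where by \cref{fourier-lifts} (applied after translating $P$ to an interval, or more carefully via \cref{lifting-def}) the coefficients $\wh{g}(\xi)$ are essentially $\mb{E}_{x\in P} f(x) e(-\xi x)$, each of modulus $\le \alpha$ in fact $\le \mb{E}_P|f| = \alpha$. Then $\mb{E}_\Gamma |W_d g|^{2m}$ becomes a sum over $2m$-tuples $\xi^{(1)},\dots,\xi^{(2m)}$ each of weight $d$ of $\prod \wh{g}(\xi^{(i)}) \overline{\wh{g}(\xi^{(i)})} \cdot \mb{E}_{\gamma \in \Gamma} e\big((\xi^{(1)} + \cdots - \xi^{(2m)}) \cdot \gamma\big)$; similarly over $G_{\mc{Q}}$ but with the exponential average over the full group, which is $1$ when the alternating sum of frequencies vanishes and $0$ otherwise. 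So the two quantities differ only through the terms where $\sum \pm \xi^{(i)} \ne 0$ in $\wh{G}_{\mc{Q}}$ but the exponential average over $\Gamma$ (a long AP, hence an interval after a linear change of variable) is not tiny. This is the standard major/minor arc split: each nonzero frequency $\eta = \sum \pm \xi^{(i)}$ has $|\eta|_{\R/\Z}$ (its distance to $0$) at least $1/\prod_{q} q$, but more usefully, since all $\xi^{(i)}$ are supported on at most $dm$ coordinates each with denominators $\le \max_q q$... the relevant point is that the common denominator of $\eta$ is at most $(\max_q q)^{2dm} \le X^{1/8}$, so $\|\eta\|_{\R/\Z} \ge X^{-1/8}$ unless $\eta = 0$, and since $P$ has length $\ge X^{3/4} \gg X^{1/8}$, the incomplete exponential sum $\mb{E}_{x\in P} e(\eta x)$ is $O(X^{-3/4}\cdot \|\eta\|_{\R/\Z}^{-1}) = O(X^{-3/4+1/8}) = O(X^{-5/8})$ for each such nonzero $\eta$.

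Third, I would bound the total contribution of the "off-diagonal" ($\eta \ne 0$) terms: there are at most $(\text{number of weight-}d\text{ frequencies})^{2m}$ tuples, each frequency coefficient is $\le \alpha$ so the product of $2m$ of them is $\le \alpha^{2m}$, and each off-diagonal exponential average is $\le X^{-5/8}$ (or whatever clean bound the interval estimate gives). The number of weight-$d$ frequencies is crudely at most $(\max_q q)^{d}\binom{|\mc{Q}|}{d}$, but since this is multiplied by $\alpha^{2m}$ we have to be a little careful; the point is that $\sum_{|\xi|=d}|\wh g(\xi)|^2 \le \mb{E}_{G_{\mc{Q}}}|g|^2 = \alpha \cdot (|G_{\mc{Q}}|/|P|)\cdot(\text{something})$... cleaner is to bound $\mb{E}_\Gamma|W_d g|^{2m} - (|\Gamma|/|G_{\mc{Q}}|)\mb{E}_{G_{\mc{Q}}}|W_dg|^{2m}$ directly: using $|\wh g(\xi)|\le \alpha$ for all relevant $\xi$ and Cauchy--Schwarz / trivial counting, the number of tuples with prescribed $\eta$ is at most the number of all $2m$-tuples of weight-$d$ frequencies that occur, and the $X^{-5/8}$ saving from the interval estimate will dominate the polynomially-bounded count of frequencies (since $(\max_q q)^{dm}\le X^{1/16}$ bounds everything in sight by a small power of $X$), leaving a total of at most $\alpha^{2m} X^{-1/4}$ after absorbing the polynomial factors into the $X^{-5/8}$.

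The main obstacle I expect is bookkeeping the counting of frequency tuples against the $\alpha^{2m}$ weight and the $X^{-\Theta(1)}$ exponential-sum saving, to land exactly on the clean bound $\alpha^{2m}X^{-1/4}$ — in particular making sure the hypothesis $(\max_{q\in\mc{Q}}q)^{dm}\le X^{1/16}$ is used correctly to control the denominators of the $\eta$'s and the number of admissible frequencies simultaneously. A secondary subtlety is translating the arithmetic progression $P$ (with common difference coprime to all $q\in\mc{Q}$) into the clean "interval" picture so that \cref{fourier-lifts}-type identities and standard incomplete exponential sum bounds $|\sum_{x\in P}e(\eta x)|\ll \min(|P|,\|\eta\|_{\R/\Z}^{-1})$ apply verbatim; this should be routine since a linear change of variables identifies $P$ with an interval and the frequency $\eta$ gets multiplied by the common difference, which is invertible modulo the denominator of $\eta$.
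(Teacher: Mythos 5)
Your proposal is correct and follows essentially the same route as the paper: expand $|W_d g|^{2m}$ on the Fourier side, note that every coefficient $\wh g(\xi)$ is bounded by $\alpha$, observe that the diagonal (zero alternating frequency) terms reproduce $\mb{E}_{G_{\mc{Q}}}|W_d g|^{2m}$ exactly, and bound each off-diagonal term via the incomplete exponential sum over $P$, using the coprimality of the common difference and the hypothesis $(\max_{q\in\mc{Q}}q)^{dm}\le X^{1/16}$ to control both the denominators of the nonzero frequencies and the crude count of frequency tuples. The bookkeeping you were unsure about does close: the number of $2m$-tuples is at most $(\max_q q)^{4dm}\le X^{1/4}$ and each off-diagonal exponential average is $O(X^{-1/2})$ or better, giving an error of at most $\alpha^{2m}X^{-1/4}$, exactly as in the paper.
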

\begin{proof} Write $q_* := \max_{q \in \mc{Q}} q$ for notational convenience. Since $\mb{E}_{P} |f| = \alpha$, it follows from \cref{normalization} that $\mb{E}_{G_{\mc{Q}}} |g| = \alpha$, and thus $|\wh{g}(\xi)| \le \alpha$ for all $\xi \in \wh{G}_{\mc{Q}}$. By definition, 
\[ W_{d} g(x) = \sum_{\xi : |\xi| = d} \widehat{g}(\xi) e (\xi  x).\]
Denoting by $\mc{C}$ the complex conjugation map, we compute
\begin{align}\nonumber
\mb{E}_{G_{\mc{Q}}} (\mu - |G_{\mc{Q}}|^{-1}) |W_{d} g|^{2m}  & =  \sum_{\substack{\sum_{j=1}^{2m} (-1)^j \xi_j \neq 0 \\ |\xi_j| = d}} \Big(\prod_{j=1}^{2m} \mathcal{C}^{j}\wh{g}(\xi_j)\Big) \widehat{\mu}\Big({-\sum_{j = 1}^{2m}(-1)^j \xi_j}\Big) \\ &  \le \alpha^{2m}\sum_{\substack{\sum_{j=1}^{2m}(-1)^j\xi_j\neq 0 \\ |\xi_j| = d}} \Big|\widehat{\mu}\Big({-\sum_{j = 1}^{2m} (-1)^j\xi_j}\Big)\Big|.\label{comparison-fourier}\end{align}
To estimate the Fourier coefficients $\wh{\mu}$ we unwind the identification of $\wh{G}_{\mc{Q}}$ with $\bigoplus_{q \in \mc{Q}} (\frac{1}{q}\Z/\Z)$, writing $\xi_j = \sum_{q \in \mc{Q}} \theta_{q,j}/q$ for $j = 1,\dots, 2m$ and for integers $\theta_{q,j}$. Thus
\begin{equation}\label{fourier-gamma}
 \wh{\mu}\Big({-\sum_{j = 1}^{2m} (-1)^j\xi_j}\Big) = |G_{\mc{Q}}|^{-1} \mb{E}_{x \in P} e (\theta x),
\end{equation}
where $\theta := \sum_{q \in \mc{Q}} \sum_{j = 1}^{2m} (-1)^j\theta_{q,j}/q$.
Now observe that $\theta$ is not zero in $\R/\Z$ (since $\sum_{j=1}^{2m} (-1)^j \xi_j \neq 0)$. Also, since $|\xi_j| \le d$ for all $j$, at most $2dm$ of the $\theta_{j,q}$ are non-zero. Thus $\theta$ is a non-zero element of $\R/\Z$ with a denominator dividing some product $\prod_{i = 1}^{2dm} q_i$, $q_i \in \mc{Q}$. Let $t$ be the common difference of $P$. Since $t$ is coprime to all elements of $\mc{Q}$, $\theta t$ is also a non-zero element of $\R/\Z$ with a denominator dividing some product $\prod_{i = 1}^{2dm} q_i$, $q_i \in \mc{Q}$. Since $\prod_{i = 1}^{2dm} q_i \le q_*^{2dm} \le X^{1/4}$, it follows that $\Vert \theta t \Vert_{\R/\Z} \ge X^{-1/4}$. By summing a geometric progression we have 
\[ \big| \mb{E}_{x \in P} e(\theta x)| \le |P|^{-1} \Vert \theta t \Vert_{\R/\Z}^{-1} \le X^{-1/2}\] (here using that the length of $P$ is at least $X^{3/4}$). From this and \cref{fourier-gamma} it follows that
\[ \Big| \wh{\mu}\Big({-\sum_{j = 1}^{2m}(-1)^j  \xi_j}\Big)\Big| \le |G_{\mc{Q}}|X^{-1/2}.\]
The number of $\xi \in \wh{G}_{\mc{Q}}$ with $|\xi| = d$ is bounded above by $\binom{|\mc{Q}|}{d} q_*^d \le q_*^{2d}$, so the number of terms in the sum on the RHS of \cref{comparison-fourier} is $\le q_*^{4dm} \le X^{1/4}$, by assumption. Thus the RHS of \cref{comparison-fourier} is $\le |G_{\mc{Q}}|^{-1}\alpha^{2m}X^{-1/4}$, and the result follows upon multiplying through by $|G_{\mc{Q}}|$.
\end{proof}

\section{Proof of the arithmetic level \texorpdfstring{$d$}{} inequality}\label{sec4}

In this section, we prove \cref{thm:level-d-int}. The argument is a certain ``relativized'' version of the inductive argument present in \cite[Lemma~5.1,~Theorem~5.2]{KLM23}. The main new idea is to alter the use of H\"older's inequality in \cite[Lemma~5.1]{KLM23} (which fails to account for the sparsity of $[X]$ when embedded as a subset of $G_{\mc{Q}}$) via an appeal to \cref{lem32}.

In the proof of \cref{thm:level-d-int} we will use the following definition, which essentially encodes the failure of the second clause in \cref{thm:level-d-int}.

\begin{definition}\label{def:global}
Let $P \subset \Z$ be an arithmetic progression and let $\mc{Q}$ be a set of pairwise coprime integers, all coprime to the common difference of $P$. Let $f :P \to \C$. We say that $f$ is $(r,\alpha,d)$-integer-global with respect to $P, \mc{Q}$ if, for every subprogression $P' = \{ x \in P : x \equiv a \mdlem{\prod_{q \in S} q}\}$, where $S \subseteq \mc{Q}$ is a set of cardinality at most $d$, the average of $|f(x)|$ on $P'$ is at most $r^{|S|} \alpha$. 
\end{definition}
We note that this definition enjoys a hereditary property: if $f$ is $(r,\alpha, d)$-integer-global with respect to $P, \mc{Q}$, then it is $(r, r^{|S|} \alpha, d - |S|)$-integer-global with respect to $P', \mc{Q}'$, for any subprogression $P' \subseteq P$ cut out by a congruence condition as above and with $\mc{Q}' = \mc{Q} \setminus S$.

We now write down a claim which is somewhat stronger than \cref{thm:level-d-int}, so as to enable an inductive argument. One difference is that it involves a progression $P \subseteq [X]$, whilst \cref{thm:level-d-int} corresponds to the case $P = [X]$. This is necessary because the proof proceeds inductively by passing to subprogressions. A more important difference is that the claim encodes (via the notion of globalness, \cref{def:deriv-global}) certain bounds on derivatives which do not appear in \cref{thm:level-d-int}. Again, this is to allow an inductive argument to work.

Define for non-negative integer $d$ and real $\alpha \in (0,1)$ the quantities 
\begin{equation}\label{r-gam-defs} r_d(\alpha) := \Big(\frac{d}{\log(1/\alpha)}\Big)^{1/2}\text{ and }\gamma_d(\alpha):= \Big(\frac{C_0\log(1/\alpha)}{d}\Big)^{d/2} \alpha.\end{equation}
where $C_0 = 2^{13}$ and with the convention that $\gamma_0(\alpha) = \alpha$.

\begin{claim}\label{key-inductive-claim}
Let $X \ge 1$ and $\alpha \in (0,1]$ be parameters with $\alpha > 4  X^{-1/2}$. Write $L := \log(1/\alpha)$. Let $d \le 2^{-7} L$ be a nonnegative integer. Let $P \subseteq [X]$ be an arithmetic progression of length $\ge X^{3/4 + d/16L}$. Let $\mc{Q}$ be a set of pairwise coprime positive integers with $\max_{q \in \mc{Q}} q \le X^{1/32L}$, and with all $q \in \mc{Q}$ coprime to the common difference of $P$. Suppose that $f \in B(P)$ is $(2,\alpha, 2 L)$-integer-global. Then the function $W_{d} \Psi_{P, \mc{Q}} f \in B(G_{\mc{Q}})$ is $(r_d(\alpha), \gamma_d(\alpha))$-derivative-global in the sense of \cref{def:deriv-global}. 
\end{claim}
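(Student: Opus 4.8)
The plan is to prove \cref{key-inductive-claim} by induction on $d$. The base case $d = 0$ is essentially trivial: $W_0 \Psi_{P,\mc{Q}} f = \mb{E}_{G_{\mc{Q}}} \Psi_{P,\mc{Q}} f = \mb{E}_P |f|$ is... wait, not quite—$W_0 g = \wh{g}(0)$, and since $f$ takes complex values we must be a little careful, but in any case $|\wh g(0)| \le \mb{E}_{G_{\mc{Q}}}|g| = \alpha = \gamma_0(\alpha)$, and any derivative $D_{S,x}$ with $|S|\ge 1$ of a constant function vanishes, so the $(r_0,\gamma_0)$-derivative-global condition $\snorm{D_{S,x}W_0 g}_2 \le r_0^{|S|}\gamma_0$ holds. (For $|S| = 0$ it reads $\snorm{W_0 g}_2 \le \gamma_0 = \alpha$, which is what we just checked.)

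For the inductive step, fix $d \ge 1$ and assume the claim for all smaller values. We must bound $\snorm{D_{S,x} W_d g}_2$ for every $S \subseteq \mc{Q}$ and $x \in G_S$, where $g = \Psi_{P,\mc{Q}} f$. Set $r := |S|$. By \cref{eq:deriv}, $D_{S,x} W_d = W_{d-r} D_{S,x}$, so we are bounding $\snorm{W_{d-r} D_{S,x} g}_2$. The key structural point is that $D_{S,x} g = \sigma_{S\to x} L_S g$, and $L_S = \sum_{T \subseteq S} (-1)^{|T|} E_T$ by \cref{es-ls}; applying \cref{cd-lem} to each term $\sigma_{S \to x} E_T \Psi_{P,\mc{Q}} f$ identifies it (up to the harmless multiplicative error $(1+\eta)$, $|\eta| \le X^{-1/2}$) with $\Psi_{P',\mc{Q}'}(\iota_* f)$ where $P' = \{x \in P : x \equiv a \md{\prod_{q\in S\setminus T}q}\}$ and $\mc{Q}' = \mc{Q}\setminus S$. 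Summing over $T \subseteq S$ with signs, $D_{S,x} g$ is (up to $O(X^{-1/2})$ errors) a signed combination of lifts $\Psi_{P',\mc{Q}'} f|_{P'}$ over the $2^{|S|}$ subprogressions $P'$ of $P$ cut out by sub-products of $\prod_{q\in S}q$. Hence $W_{d-r}D_{S,x}g$ is a corresponding combination of $W_{d-r}\Psi_{P',\mc{Q}'}(f|_{P'})$. Now I invoke the hereditary property of integer-globalness: $f|_{P'}$ is $(2, 2^{|S\setminus T|}\alpha, 2\LL - |S\setminus T|)$-integer-global with respect to $P', \mc{Q}'$. The parameter conditions on $P', \mc{Q}'$ needed to apply the inductive hypothesis at level $d - r$ follow from those on $P, \mc{Q}$ together with $(\max q)^{dm}\le X^{1/16}$-type bounds (note $P'$ has length $\ge |P|/(\max q)^{|S|} \ge X^{3/4 + (d-r)/16\LL}$ once $(\max q)^{|S|}\le X^{r/16\LL}$, which holds since $\max q \le X^{1/32\LL}$ and $r \le d \le 2^{-7}\LL$). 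So by induction each $W_{d-r}\Psi_{P',\mc{Q}'}(f|_{P'})$ is $(r_{d-r}(\alpha'), \gamma_{d-r}(\alpha'))$-derivative-global with $\alpha' = 2^{|S\setminus T|}\alpha$; in particular its $L^2(G_{\mc{Q}'})$-norm is $\le \gamma_{d-r}(\alpha')$.

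The arithmetic heart of the argument is then to combine these $2^{|S|}$ pieces by the triangle inequality in $L^2(\Gamma')$ (where $\Gamma' = \pi_{\mc{Q}'}(P')$—and here is where I'd apply \cref{lem32} to pass between $L^2(\Gamma')$ and $L^2(G_{\mc{Q}'})$, absorbing the sparsity of $P'$ inside $G_{\mc{Q}'}$; the exponent $2m$ in \cref{lem32} is used with $m=1$, or more precisely we'd want the even-power version that actually appears, so one should track that \cref{lem32} is the right tool and the $X^{-1/4}$ error is negligible against $\gamma$), and to check that the resulting bound $\sum_{T\subseteq S}\gamma_{d-r}(2^{|S\setminus T|}\alpha) + O(X^{-1/2}\text{-errors})$ is at most $r_d(\alpha)^{|S|}\gamma_d(\alpha)$. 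Unwinding the definitions \cref{r-gam-defs}, this is the inequality
\[
\sum_{j=0}^{r}\binom{r}{j}\Big(\frac{C_0\log(1/(2^j\alpha))}{d-r}\Big)^{(d-r)/2} 2^j\alpha \;\lesssim\; \Big(\frac{d}{\LL}\Big)^{r/2}\Big(\frac{C_0\LL}{d}\Big)^{d/2}\alpha,
\]
which I would verify by crude estimates using $\log(1/(2^j\alpha)) = \LL - j\log 2 \le \LL$, the constraint $d \le 2^{-7}\LL$ (so $\LL/(d-r)$ and $\LL/d$ are comparable up to constants absorbed by the large constant $C_0 = 2^{12}$), and the fact that $\sum_j \binom{r}{j}2^j = 3^r$ which is dominated by a factor like $(C_0\LL/d)^{r/2}$ since $d \le 2^{-7}\LL$.

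\textbf{Main obstacle.} The conceptually straightforward but technically delicate part is the bookkeeping in the inductive step: verifying that all of the parameter hypotheses of \cref{cd-lem}, \cref{lem32}, and \cref{key-inductive-claim} itself are met for the subprogressions $P'$ and subfamilies $\mc{Q}'$ at the reduced level $d - r$—in particular the length condition on $P'$ and the two size conditions on $\mc{Q}'$, which is why the exponents in the hypotheses were chosen with the slack terms $d/16\LL$, $d/32\LL$. The other genuinely substantive point is getting the final numerical inequality to close with room to spare; this is exactly where the largeness of $C_0$ and the smallness of $d$ relative to $\LL$ are consumed, and it mirrors the role of Hölder's inequality in \cite[Lemma~5.1]{KLM23}, now replaced (to handle the sparsity of $[X]$ inside $G_{\mc{Q}}$) by the combination of \cref{lem32} and the triangle inequality.
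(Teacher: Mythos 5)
Your treatment of the case $|S|\ge 1$ matches the paper's argument in all essentials: swap $D_{S,x}$ past $W_d$ via \cref{eq:deriv}, expand $L_S=\sum_{T\subseteq S}(-1)^{|T|}E_T$, identify each $\sigma_{S\to x}E_T\Psi_{P,\mc Q}f$ with $(1+\eta_T)\Psi_{P_T,\mc Q'}(f|_{P_T})$ via \cref{cd-lem}, invoke the hereditary property of integer-globalness and the inductive hypothesis at level $d-|S|$, and close with a numerical inequality that is essentially the paper's \cref{equiv-calcul}; the parameter bookkeeping you flag is indeed where the slack exponents $d/16\LL$, $d/32\LL$ get consumed. (One small misallocation even here: no appeal to \cref{lem32} is needed to combine the $2^{|S|}$ pieces --- they are already functions on $G_{\mc Q'}$, so the triangle inequality in $L^2(G_{\mc Q'})$ suffices.)

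The genuine gap is the case $S=\emptyset$, i.e.\ the bound $\Vert W_d\Psi_{P,\mc Q}f\Vert_2\le\gamma_d(\alpha)$, which your plan never actually addresses: for $S=\emptyset$ the operator $L_\emptyset$ is the identity, the decomposition produces only $P'=P$, $\mc Q'=\mc Q$ at the \emph{same} level $d$, so ``applying the inductive hypothesis'' there is circular --- and this case is the heart of the claim, since \cref{thm:level-d-int} is deduced precisely by taking $S=\emptyset$. The paper handles it by a completely different mechanism: having established the $|S|\ge 1$ bounds, it observes that $W_d\wt f$ is $(r_d(\alpha),\Vert W_d\wt f\Vert_2)$-derivative-global, and feeds this into the global hypercontractivity input \cref{thm:input-hypercontractive} with $p=2m$, $m=\lceil\LL/d\rceil$, $\rho=\tfrac1{20}m^{-1/2}$, to get $\Vert W_d\wt f\Vert_{2m}\le\rho^{-d}\Vert W_d\wt f\Vert_2$. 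Combined with H\"older in the form $\Vert W_d\wt f\Vert_2^2=\langle\wt f,\mathbf 1_\Gamma W_d\wt f\rangle\le\Vert\wt f\Vert_{2m/(2m-1)}\Vert\mathbf 1_\Gamma W_d\wt f\Vert_{2m}$ and with \cref{lem32} (whose real role is exactly this $2m$-th moment comparison between $\Gamma$ and $G_{\mc Q}$, not the step where you invoked it), one gets $\Vert W_d\wt f\Vert_2^2\le 2(400e)^d m^d\alpha^2$; if the desired bound failed one would also have $\Vert W_d\wt f\Vert_2^2\ge(C_0(m-1))^d\alpha^2$, contradicting $C_0=2^{12}$. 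Your proposal never uses \cref{thm:input-hypercontractive} at all, so as written it cannot close the induction; without the hypercontractive step there is no route from the $|S|\ge1$ information to the $L^2$ bound on the level-$d$ projection itself.
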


Before proving the claim, we deduce \cref{thm:level-d-int} from it. 

\begin{proof}[Proof of \cref{thm:level-d-int}] We seek to apply the claim with $P = [X]$; thus the conditions on the length and common difference of $P$ are then trivially satisfied. If $f$ is not $(2, \alpha, 2 \log(1/\alpha))$-integer-global then the second clause of \cref{thm:level-d-int} holds, so we may suppose $f$ does satisfy this condition. The hypotheses of the claim are therefore satisfied, and we conclude that $W_d \Psi_{P,Q} f$ is $(r_d(\alpha), \gamma_d(\alpha))$-derivative-global (see \cref{def:deriv-global}). In particular, taking $S = \emptyset$ in the definition, it follows that 
\[ \Vert W_d \Psi_{P,Q} f \Vert_2^2 \le \gamma_d(\alpha)^2  = \Big( \frac{C_0 \log(1/\alpha)}{d} \Big)^d \alpha^2.  \]
This statement is equivalent to \cref{main41}: this follows from \cref{fourier-lifts} and the definition (\cref{sec2}) of $W_d$, noting that the weight $d$ elements $\xi \in \wh{G}_Q$ are precisely the elements $\frac{a}{\prod_{q \in S} q}$, where $|S| = d$ and $q \in S \Rightarrow q \nmid a$, appearing in \cref{main41}.

\end{proof}

We now turn to the proof of \cref{key-inductive-claim} itself.

\begin{proof}[{Proof of \cref{key-inductive-claim}}]
Throughout the proof, write $L = \log(1/\alpha)$ and $\tilde f := \Psi_{P, \mc{Q}} f$ for the lift of $f$ to $B(G_Q)$. We will proceed by induction on $d$. 

The case $d = 0$ is straightforward after parsing the definitions, as we will now show. In this case, $W_{d} \wt{f} = \mb{E} \wt{f}$ is constant. From the fact that $f$ is $(2,\alpha, 2 L)$-integer-global (\cref{def:global}, applied with $S = \emptyset$) and from \cref{normalization} applied to $|f|$ we see that 
\begin{equation}\label{ef}  |\mb{E}_{G_{\mc{Q}}} \wt{f}| \le \mb{E}_{G_{\mc{Q}}} |\wt{f}| = \mb{E}_{[X]} |f|  \le \alpha.\end{equation} We are required to prove that $W_0 \wt{f}$ is $(0, \alpha)$-derivative-global, where this definition is given in \cref{def:deriv-global}; thus we must check that $\Vert D_S W_0 \wt{f} \Vert_2 \le 0^{|S|} \alpha$ for all $S$, where $0^0$ is interpreted as $1$. In the case $S = \emptyset$, this follows from the preceding discussion. In the case $|S| \ge 1$, the derivative $D_S W_0 \wt{f}$ is zero (see Remark (4) after \cref{def:deriv-global}). This completes the discussion of the case $d = 0$.

Now suppose that $d \ge 1$, and we have verified the claim for all smaller values of $d$.
Recalling \cref{def:deriv-global}, what we wish to show is that 
\begin{equation}\label{wish-to-show} \Vert D_S W_d \wt{f} \Vert_2 \le r_d(\alpha)^{|S|} \gamma_d(\alpha)\end{equation} for all $S \subseteq \mc{Q}$.
Set $k := |S|$, and write $\mc{Q}' := \mc{Q} \setminus S$. We first handle the case when $k\neq 0$. 

The crucial fact here will be \cref{eq:deriv}, which states that 
\begin{equation}\label{swap-der-weight}  D_{S} W_{d} \wt{f} = W_{d-k} D_{S} \wt{f}.\end{equation} 
If $k > d$ then both sides are zero and \cref{wish-to-show} is immediate. Therefore suppose that $k \le d$, and in the first instance suppose that $k \ge 1$.
Equation \cref{swap-der-weight} will allow us to access the inductive hypothesis. To carry this out, suppose that the derivative $D_S \wt{f}$ is $D_{S \rightarrow a} \wt{f}$ for some $a \in G_S$. Using \cref{es-ls}, we have
\[ D_S \wt{f} = \sigma_{S \rightarrow a} L_S \wt{f} = \sum_{T \subseteq S} (-1)^{|T|} \sigma_{S \rightarrow a} E_T \wt{f}.\]
We apply \cref{cd-lem}; this is valid since $(\max_{q \in \mc{Q}} q)^{k} \le X^{d/32L} \le X^{1/4}$. From the commutative diagram in that lemma we obtain
\[\sigma_{S \rightarrow a} E_T \wt{f} = (1 + \eta_T)\Psi_{P_T,\mc{Q}'} f, \] where $\eta_T = \eta_{S, T, a, P}$ satisfies $|\eta_T| \le X^{-1/2}$, each $P_T$ is a subprogression of $P$ cut out by a congruence condition modulo $\prod_{q \in S\setminus T} q$, and by slight abuse of notation we have identified $f$ with its restriction to $P_T$. Thus
\[ D_S \wt{f} = \sum_{T \subseteq S} (-1)^{|T|}(1 + \eta_T)\Psi_{P_T,\mc{Q}'} f, \] and so from \cref{swap-der-weight} (and crudely bounding $|\eta_T| \le 1$) we have
\begin{equation}\label{ind-kd}  \Vert D_S W_d \wt{f}\Vert_2 \le 2\sum_{T \subseteq S} \Vert W_{d-k} \Psi_{P_T, \mc{Q}'} f \Vert_2.\end{equation}

By the hereditary nature of the definition of integer-global (that is, the comments after \cref{def:global}) we see that $f$ is $(2, 2^k \alpha, 2 L  - k)$-integer-global with respect to $P_T, \mc{Q}'$ and therefore (since $2 \log 2 > 1$) is $(2, \alpha', 2 L')$-integer-global with respect to $P_T, \mc{Q}'$ where $\alpha' := 2^k \alpha$ and $L' := \log(1/\alpha')$.
Note that $\alpha' = 2^k \alpha \le 2^d \alpha \le 1$ by the assumption on $d$. Also, 
\[ |P_T| \ge |P| (\max_{q \in \mc{Q}} q)^{-k} -1 \ge X^{\frac{3}{4} + \frac{d - k/2}{16 L}} - 1 \ge X^{\frac{3}{4} + \frac{d - 2k/3}{16 L}}.\] (For the second bound, the crude inequality $\frac{k}{96L} \ge \frac{1}{96\log X}$ amply suffices.)
Now one may check that $\frac{d - 2k/3}{L} \ge \frac{d - k}{L'}$ (the key inequality here is that $d \le \frac{L}{3 \log 2}$, which is certainly a consequence of the assumption on $d$). Therefore
\begin{equation}\label{ind-1} |P_T|  \ge X^{\frac{3}{4} + \frac{d-k}{16 L'}}. \end{equation} 
From the inductive hypothesis (which we may apply since $k \ge 1$, noting the condition \cref{ind-1}), $W_{d-k} \Psi_{P_T, \mc{Q}'} f$ is derivative-global with parameters $(r_{d-k}(\alpha'), \gamma_{d-k}(\alpha'))$, and so from the definition in \cref{def:deriv-global} of derivative-global and \cref{ind-kd} we obtain (crudely bounding $2^{k+1} \le 4^k$)
\[ \Vert D_S W_d \wt{f} \Vert_2  \le 4^k\gamma_{d-k}(\alpha'). \]
To complete the proof of \cref{wish-to-show} (in the case $k \ge 1$) it therefore suffices to establish
\[4^{k}\gamma_{d-k}(\alpha')\le r_{d}(\alpha)^{k}\gamma_{d}(\alpha).\]
Recalling the definitions \cref{r-gam-defs} and the fact that $\alpha' = 2^k \alpha$ and expanding, we see that this is equivalent to
\begin{equation}\label{equiv-calcul} 8^{k} \Big(\frac{d}{d-k}\Big)^{(d-k)/2}\le C_0^{k/2} \Big(\frac{L}{L'}\Big)^{(d-k)/2}.\end{equation}
Since $\frac{d}{d - k} \le e^{k/(d-k)}$, we see that the LHS is at most $(8e^{1/2})^k$. Since $L \ge L'$, 
 the RHS is at least $C_0^{k/2}$. Therefore \cref{equiv-calcul} indeed holds since $C_0$ is much bigger than $64e$.

This completes the proof of \cref{wish-to-show} in the case $k \ge 1$, that is to say when $S \neq \emptyset$. For the remaining case $S = \emptyset$, the required statement \cref{wish-to-show} is
\begin{equation}\label{kprime-zero} \Vert W_d \tilde f \Vert_2 \le \gamma_d(\alpha).\end{equation}
Let $m := \lceil L/d\rceil \ge 4$. 

As in \cref{lem32}, denote by $\mu$ the pushforward of the uniform counting measure on $P$ under the natural map $\pi_{\mc{Q}} : \Z \rightarrow G_{\mc{Q}}$. We observe that the map $(\pi_{\mc{Q}})|_P$ has fairly uniform fibres: if $|G_{\mc{Q}}| \ge |P|$ then $\mu(\gamma) = |P|^{-1}$ for all $\gamma \in \pi_{\mc{Q}}(P)$, whilst if $|G_{\mc{Q}}| < |P|$ then $\pi_{\mc{Q}}$ is surjective and $\mu(\gamma)$ is either $|P|^{-1}\lfloor |P| /|G_{\mc{Q}}| \rfloor$ or $|P|^{-1}\lceil |P| /|G_{\mc{Q}}| \rceil$, hence varies by a factor of at most $2$.

Write $\Gamma := \pi_{\mc{Q}}(P)$. Noting that $\wt{f}$ is supported on $\Gamma$, we have (with inner products on $G_{\mc{Q}}$) 
\begin{equation}\label{first-wd}
\snorm{W_d \wt{f}}_2^2 = \langle \wt{f}, W_d\wt{f}\rangle  =\langle \wt{f} , \mathbf{1}_{\Gamma} W_d \wt{f} \rangle  \le \Vert \wt{f} \Vert_{2m/(2m - 1)} \Vert \mathbf{1}_{\Gamma} W_d \wt{f} \Vert_{2m}.
\end{equation}
Now since $|f| \le 1$, we have $|\tilde f| \le \Vert \mu \Vert_{\infty} |G_{\mc{Q}}|$ by the definition of the lift $\tilde f = \Psi_{P,\mc{Q}} f$ (\cref{lifting-def}) and of the measure $\mu$. We also have $1_{\Gamma} \le 2 \Vert \mu \Vert_{\infty}^{-1} \mu$ by the above remarks about the uniform fibres of $\pi_{\mc{Q}}$. Substituting in to \cref{first-wd} and using the fact (see \cref{ef}) that $\mb{E} |\tilde f| \le \alpha$, we therefore obtain
\begin{equation}
\snorm{W_d \wt{f}}_2^2 \le 2^{1/2m}\alpha^{(2m-1)/2m}  \Big(\int |W_d \wt{f}|^{2m} d\mu \Big)^{1/2m}. \label{key-compare}
\end{equation}
Using $d \lceil L/d\rceil \le 2L$ (which holds since $d \le 2^{-7} L$) and the assumption $\max_{q \in \mc{Q}} q \le X^{1/32 L}$, we see that the condition $(\max_{q \in \mc{Q}} q)^{dm} \le X^{1/16}$ required in \cref{lem32} is satisfied. With the notation in use here, that lemma states that
\[ \int |W_d \wt{f}|^{2m} d\mu  \le \mb{E}_{G_{\mc{Q}}} |W_d \tilde f|^{2m} + \alpha^{2m} X^{-1/4}. \]
 which implies that
\[  \Big(\int |W_d \wt{f}|^{2m} d\mu\Big)^{1/2m} \le 2^{1/2m} \max(\snorm{W_d \wt{f}}_{2m}, X^{-1/4m}\alpha  ). \]
Thus, substituting in to \cref{key-compare} we see that 
\begin{equation}\label{two-terms} \snorm{W_d \wt{f}}_2^2\le 2^{1/m} \alpha^{(2m-1)/2m} \max(\snorm{W_d \wt{f}}_{2m}, X^{-1/4m} \alpha).\end{equation}
If the second term on the RHS of \cref{two-terms} is the larger then (since $\alpha \ge 4 X^{-1/2}$)
\[\snorm{W_d \wt{f}}_2^2\le \alpha^{2} (4\alpha^{-1}X^{-1/2})^{1/2m}\le \alpha^2\]
which certainly implies \cref{kprime-zero} (since $\gamma_d(\alpha) \ge \alpha$ in the range $d \le 2^{-7}\log(1/\alpha)$). 

Otherwise, the first term on the RHS of \cref{two-terms} is larger, that is to say
\begin{equation}\label{first-term} \snorm{W_d \wt{f}}_2^2\le 2^{1/m} \alpha^{(2m-1)/2m} \snorm{W_d \wt{f}}_{2m}.\end{equation}

Suppose from now on, as a hypothesis for contradiction, that \cref{kprime-zero} fails to hold. Then $W_d \wt{f}$ is $(r_d(\alpha),\snorm{W_d \wt{f}}_2)$ derivative-global. Indeed, we have already shown the stronger condition 
\[ \Vert D_S W_d \wt{f} \Vert_2 \le r_d(\alpha)^{|S|} \gamma_d(\alpha)\] when $|S| > 0$, and for $S = \emptyset$ the condition 
\[ \Vert D_S W_d \wt{f} \Vert_2 \le r_d(\alpha)^{|S|}\Vert W_d \wt{f} \Vert_2\] is trivial.

With an application of \cref{thm:input-hypercontractive} in mind, we choose parameters as in the comments after the statement of that result. Thus, set
\[ r := r_d(\alpha), \quad m= \lceil r^{-2} \rceil = \lceil L/d\rceil, \quad p := 2m, \quad \rho := \frac{1}{20} m^{-1/2}.\]
Set $\gamma = \Vert W_d \wt{f} \Vert_2$ so that, as discussed above, $W_d \wt{f}$ is $(r, \gamma)$-derivative-global.

Apply \cref{thm:input-hypercontractive}; the conclusion is that 
\begin{equation}\label{th23-conclu}\snorm{W_d\wt{f}}_{2m} = \rho^{-d}\snorm{T_{\rho}W_d \wt{f}}_{2m}\le \rho^{-d}\snorm{W_d \wt{f}}_2.\end{equation}

Thus \cref{first-term}, \cref{th23-conclu} give

\begin{equation}\label{wd-upper} \snorm{W_d \wt{f}}_2^2\le 2^{2/m} \alpha^{(2m-1)/m} \rho^{-2d}  \le 4(400)^d \alpha^{2 - 1/m} m^d \le 4(400 e)^d \alpha^{2} m^d .\end{equation}
On the other hand, from the assumption that \cref{kprime-zero} fails and the definition \cref{r-gam-defs} of $\gamma_d(\alpha)$ we have
\begin{equation}\label{wd-lower} \Vert W_d \wt{f} \Vert_2^2 \ge \gamma_d(\alpha)^2 \ge (C_0 (m-1))^d \alpha^2.\end{equation}
Since $C_0$ is so large, the bounds \cref{wd-upper} and \cref{wd-lower} contradict one another.

Thus, we were wrong to suppose that \cref{kprime-zero} does not hold, and this completes the proof of all cases of \cref{key-inductive-claim}.
\end{proof}

\section{Weight functions and exponential sums}\label{sec5}

In this section we define a smooth weight function on the squares and prove some exponential sum estimates for it. The estimates are slightly bespoke to the application at hand, but of essentially standard type.

\subsection{A smooth weight on the squares}

To control Fourier transforms, we will need a smooth function supported on squares. To construct this, we use a smooth bump function $w$ with the following properties.

\begin{lemma}\label{lem:bump}
There exists a smooth function $w:\R \to [0,1]$, supported on $[-1,1]$, such that $w(x) \ge \frac{1}{2}$ for $x \in [-\frac{1}{2}, \frac{1}{2}]$ and satisfying the Fourier estimate $|\wh{w}(t)|\ll e^{-\sqrt{|t|}}$ for all $t \in \R$.
\end{lemma}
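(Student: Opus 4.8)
The plan is to construct $w$ explicitly as a suitably scaled infinite convolution of characteristic functions of intervals, a classical device for producing smooth compactly supported functions with rapidly (but not super-exponentially) decaying Fourier transform. Concretely, I would fix a summable sequence of positive reals $a_j$ with $\sum_j a_j \le \frac{1}{2}$, say $a_j = c j^{-2}$ for an appropriate small constant $c$, and set
\[ w_0 := \bigast_{j \ge 1} \frac{1}{2a_j}\mathbf{1}_{[-a_j,a_j]}, \]
the convolution being over all $j$. Each finite partial convolution is a nonnegative function supported on $[-\sum_{j\le J} a_j, \sum_{j\le J}a_j] \subseteq [-\tfrac12,\tfrac12]$ with total mass $1$, and the partial products converge uniformly (the Fourier transforms $\prod_j \operatorname{sinc}(2\pi a_j t)$ converge locally uniformly and are dominated in a way that gives an $L^1$ limit), so $w_0$ is well-defined, nonnegative, supported on $[-\tfrac12,\tfrac12]$, with $\int w_0 = 1$. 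One then checks $w_0$ is smooth: $\wh{w_0}(t) = \prod_{j\ge1}\operatorname{sinc}(2\pi a_j t)$ and for the smoothness and the decay estimate one splits the product at the index $J = J(t)$ where $a_J \approx |t|^{-1}$. For $j \le J$ one uses the trivial bound $|\operatorname{sinc}|\le 1$; for $j > J$, each factor satisfies $|\operatorname{sinc}(2\pi a_j t)| \le \frac{1}{2\pi a_j |t|}$, and multiplying $O(\sqrt{|t|})$ such factors (this is where $a_j \asymp j^{-2}$ enters: $a_j|t| \ge 1 \iff j \le c^{1/2}|t|^{1/2}$) produces a bound of the shape $e^{-c'\sqrt{|t|}\log|t|} \le e^{-\sqrt{|t|}}$ for $|t|$ large, after absorbing logarithmic factors; small $|t|$ is handled by the trivial bound $|\wh{w_0}(t)| \le \int w_0 = 1 \le e^{1-\sqrt{|t|}+\dots}$, adjusting the implied constant. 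The rapid decay of $\wh{w_0}$ forces $w_0 \in C^\infty$ since $t^k \wh{w_0}(t)$ is integrable for every $k$.

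Having produced such a $w_0$ with $\int w_0 = 1$, I still need the lower bound $w(x) \ge \frac12$ on $[-\tfrac12,\tfrac12]$, which $w_0$ itself need not satisfy pointwise. The cleanest fix is to work on a smaller scale and rescale at the end: produce the infinite convolution supported on $[-\tfrac14,\tfrac14]$ (take $\sum a_j \le \tfrac14$), call it $w_1$, which is continuous, even, and has $\int w_1 = 1$; being even and log-concave-ish is not automatic, so instead I would argue via the mean value: since $w_1$ is continuous, supported on $[-\tfrac14,\tfrac14]$, even, nonnegative with integral $1$, it attains values $\ge 2$ somewhere. A more robust route: replace the target lower bound by noting that it suffices to find \emph{any} smooth $w \ge 0$ supported on $[-1,1]$, with $\wh w$ decaying like $e^{-\sqrt{|t|}}$, and $w \ge \tfrac12$ on $[-\tfrac12,\tfrac12]$; so take $w := w_1 * (2\cdot\mathbf{1}_{[-1/2,1/2]})$ — no, that destroys smoothness. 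Better: set $w := \psi * w_1$ where $\psi = 4\cdot\mathbf{1}_{[-1/8,1/8]} * (\text{another infinite convolution on }[-1/8,1/8])$ — this is getting baroque. The simplest clean statement: let $\phi$ be the infinite convolution on $[-\tfrac14,\tfrac14]$ with $\int \phi = 1$ and $\wh\phi$ decaying like $e^{-\sqrt{|t|}}$ (constructed as above), and set $w := \mathbf{1}_{[-3/4,3/4]} * \phi$ rescaled; then $w$ is smooth (one factor $\phi$ suffices for $C^\infty$ since $\wh{w} = \wh{\mathbf 1}_{[-3/4,3/4]}\wh\phi$ still decays like $e^{-\sqrt{|t|}}$, the polynomial factor from $\wh{\mathbf 1}$ being absorbed), supported in $[-1,1]$, nonnegative, and for $|x| \le \tfrac12$, $w(x) = \int_{|y|\le 1/4}\phi(y)\mathbf 1_{[-3/4,3/4]}(x-y)\,dy = \int \phi = 1 \ge \tfrac12$ since $|x-y| \le \tfrac34$ throughout. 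Finally I multiply by a constant or rescale the domain to land exactly in the normalization claimed.

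The key steps in order: (1) fix $a_j \asymp j^{-2}$ with $\sum a_j \le \tfrac14$ and define $\phi$ as the infinite convolution of the normalized indicators $\frac{1}{2a_j}\mathbf 1_{[-a_j,a_j]}$, checking convergence and that $\phi \ge 0$, $\operatorname{supp}\phi \subseteq [-\tfrac14,\tfrac14]$, $\int\phi = 1$; (2) compute $\wh\phi(t) = \prod_j \operatorname{sinc}(2\pi a_j t)$ and prove $|\wh\phi(t)| \ll e^{-\sqrt{|t|}}$ by the split-at-$J(t)$ argument, which simultaneously yields $\phi \in C^\infty$; (3) set $w := \mathbf 1_{[-3/4,3/4]} * \phi$, verify smoothness (via $\wh w = \wh{\mathbf 1}_{[-3/4,3/4]}\wh\phi$ and the decay, noting the polynomial prefactor is harmless), support in $[-1,1]$, range in $[0,1]$ after normalizing, and the lower bound $w \equiv 1$ on $[-\tfrac12,\tfrac12]$; (4) record the Fourier estimate for $w$ itself. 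The main obstacle is step (2): getting the decay exponent to come out as a genuine $e^{-\sqrt{|t|}}$ (rather than, say, $e^{-|t|^{1/2}/\log|t|}$ or $e^{-|t|^{1/3}}$) requires the exponent sequence $a_j \asymp j^{-2}$ to be tuned precisely — faster decay of $a_j$ makes the tail product too weak, slower decay makes $\sum a_j$ diverge — and then carefully bounding the product $\prod_{j > J} (2\pi a_j|t|)^{-1}$, whose logarithm is $-\sum_{j>J}\log(2\pi a_j |t|) \asymp -\sqrt{|t|}\log\sqrt{|t|} + \sqrt{|t|}$, so that after absorbing the $\log$ into the constant and the power one still gets at least $e^{-\sqrt{|t|}}$; one may need to insert an extra $\log$-type factor into $a_j$ (e.g. $a_j \asymp (j\log^2 j)^{-1}$, giving $J(t) \asymp \sqrt{|t|}\log^2$ roughly) or simply choose $a_j = c/j^2$ and verify the exponent works out with room to spare. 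This is standard (it is the construction behind Denjoy--Carleman / Gevrey-type bump functions) but the bookkeeping is the only real content of the lemma.
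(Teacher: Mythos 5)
Your construction is essentially the one the paper has in mind: the paper omits the proof of this lemma, citing precisely this infinite-convolution-of-intervals device (Katznelson, in connection with the Denjoy--Carleman theorem), with one wide interval $[-\frac34,\frac34]$ convolved against intervals $[-n^{-3/2},n^{-3/2}]$, which is exactly your step of convolving the infinite convolution $\phi$ with $\mathbf{1}_{[-3/4,3/4]}$ to force $w\equiv 1$ on $[-\frac12,\frac12]$. Only minor slips to fix: in your step (2) the roles of $j\le J$ and $j>J$ are swapped (the bound $|\operatorname{sinc}(2\pi a_j t)|\le (2\pi a_j|t|)^{-1}$ is useful exactly for the $O(\sqrt{|t|})$ indices with $a_j|t|\ge 1$, with the trivial bound $1$ on the infinite tail), and by Stirling the logarithm of the resulting product is $-(2+\log 2\pi)\sqrt{c|t|}+O(\log |t|)$ rather than $-\sqrt{|t|}\log\sqrt{|t|}$ --- the log terms cancel --- but this is still $\le -\sqrt{|t|}$ for admissible $c$ with $\sum_j a_j\le \frac14$, so your conclusion stands.
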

\begin{proof}
The existence of such $w$ is standard and we omit the proof. One may proceed either using explicit functions as in \cite[Lemma 9]{BFI}, or via infinite convolutions of normalized characteristic functions of intervals (one being $[-\frac{3}{4}, \frac{3}{4}]$ and the rest being $[-n^{-3/2}, n^{-3/2}]$ for $n = 101,102,\dots$, say) as described in \cite[Chapter V, Lemma 2.7]{katznelson} in connection with the Denjoy-Carleman Theorem.
\end{proof}

We will also need Weyl's inequality, which we give in in the following form.

\begin{lemma}\label{weyl-consequence}
Let $X$ be sufficiently large. Let $\theta \in \R/\Z$ and suppose that $|\mb{E}_{x \in [X]} e(\theta x^2) | \ge \delta X$, where $\delta \ge X^{-1/3}$. Then there is some $q \ll (\log X)^2 \delta^{-2}$ such that $\Vert \theta q\Vert_{\R/\Z} \ll (\log X)^2 \delta^{-2}X^{-2}$.
\end{lemma}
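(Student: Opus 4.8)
This is a standard Weyl-inequality statement, so the plan is to reduce it to a Vinogradov-type count. First I would apply the Weyl differencing/squaring trick to $|\mb{E}_{x\in[X]}e(\theta x^2)|^2$: expanding the square and substituting $x = y+h$ gives
\[ \big|\mb{E}_{x\in[X]}e(\theta x^2)\big|^2 \ll \mb{E}_{|h|\le X}\,\big|\mb{E}_{x}e(2\theta h x)\big|,\]
up to boundary terms, where for each $h$ the inner sum is a geometric progression bounded by $\min(1, X^{-1}\Vert 2\theta h\Vert_{\R/\Z}^{-1})$. Under the hypothesis $|\mb{E}_{x\in[X]}e(\theta x^2)|\ge\delta$ (I read the statement as $\ge \delta X$ over a sum, equivalently $\ge\delta$ as a normalized average — I would just track the normalization consistently), this forces
\[ \sum_{|h|\le X}\min\!\big(X, \Vert 2\theta h\Vert_{\R/\Z}^{-1}\big) \gg \delta^2 X^2.\]

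Next I would run the usual divisor-type / Dirichlet-approximation argument on the left-hand sum. By Dirichlet's theorem pick $q\le X$ and $(a,q)=1$ with $\Vert \theta q - a\Vert \le 1/(qX)$, equivalently $|2\theta - a'/q'|\le 1/(q'X)$ for the reduced fraction associated to $2\theta$; then the classical estimate (see e.g. the standard lemma bounding $\sum_{h\le H}\min(XY/h, \Vert\alpha h\Vert^{-1})$ for $\alpha$ close to $a/q$) gives
\[ \sum_{|h|\le X}\min\!\big(X,\Vert 2\theta h\Vert^{-1}\big) \ll \Big(\frac{X}{q} + 1\Big)\big(X + q\log q\big) \ll \frac{X^2}{q} + X\log X + q\log X.\]
Comparing with the lower bound $\gg \delta^2 X^2$ and using $\delta\ge X^{-1/3}$ (so that $X\log X$ is negligible against $\delta^2 X^2$, since $\delta^2 X^2 \ge X^{4/3}$), I get both $q \ll \delta^{-2}\log X$ and $q\delta^2 X^2 \ll X^2$, i.e. $q\ll\delta^{-2}$ from the first term and the negligibility of the last two terms forces $q \ll \delta^{-2}X \cdot$(log factors)$/$... — more carefully, the surviving inequality is $X^2/q \gg \delta^2 X^2$, giving $q \ll \delta^{-2}$, with the log losses absorbed to yield $q \ll (\log X)^2\delta^{-2}$ after being slightly generous. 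Then $\Vert 2\theta q\Vert \le 1/X$ from the Dirichlet step, and one more inspection of the $\min$-sum (the terms with small $h$ must individually be controlled) upgrades this to $\Vert 2\theta q\Vert \ll (\log X)^2\delta^{-2}X^{-2}$: indeed if $\Vert 2\theta q\Vert = \beta$ then $\Vert 2\theta h q\Vert \approx h\beta$ for $h \le (2\beta)^{-1}$, so $\sum_{h\le \min(X, \delta^{-2})} \Vert 2\theta hq\Vert^{-1}$ would be too small unless $\beta \ll (\log X)^2 \delta^{-2} X^{-2}$. Finally, replacing $2\theta$ by $\theta$ costs only a factor of $2$ in $q$ (replace $q$ by $2q$ or use that $\Vert \theta \cdot 2q\Vert$ is what we bounded), which is harmless.

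\textbf{Main obstacle.} The only delicate point is the bookkeeping of logarithmic and constant factors: one must make sure that the $X\log X + q\log X$ error terms from the $\min$-sum estimate are genuinely dominated by $\delta^2 X^2$, which is exactly where the hypothesis $\delta \ge X^{-1/3}$ is used (it gives $\delta^2 X^2 \ge X^{4/3} \gg X(\log X)^{O(1)}$), and to be slightly wasteful so that the clean bound $q, \Vert\theta q\Vert^{-1}X^2 \ll (\log X)^2\delta^{-2}$ comes out with the stated shape rather than something with an awkward exponent. There is nothing conceptually hard here; it is a routine application of Weyl differencing plus the standard counting lemma for $\sum \min(N, \Vert\alpha h\Vert^{-1})$, and I would simply cite a textbook form of the latter (e.g. from Iwaniec–Kowalski or Vaughan) and plug in.
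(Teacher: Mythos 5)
Your route is sound in outline but differs from the paper's, and it has one under-justified step. The paper does not redo Weyl differencing: it quotes the standard quadratic Weyl inequality $\big|\sum_{x\le X}e(\theta x^2)\big|\ll(\log X)\big(Xq^{-1/2}+X^{1/2}+q^{1/2}\big)$ valid whenever $|\theta-a/q|\le 1/q^2$, and applies Dirichlet with the \emph{large} denominator bound $Q:=c(\delta/\log X)^2X^2$. With that choice the fine approximation $\Vert\theta q\Vert_{\R/\Z}\le 1/(Q)\ll(\log X)^2\delta^{-2}X^{-2}$ is automatic from Dirichlet, and Weyl's inequality (whose error terms $X^{1/2}\log X$ and $q^{1/2}\log X\le Q^{1/2}\log X$ are $\le\delta X/4$ by the choice of $c$ and $\delta\ge X^{-1/3}$) then forces $q\ll(\log X)^2\delta^{-2}$. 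So the whole lemma is two lines once $Q$ is chosen correctly.

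In your version you take the Dirichlet parameter to be $X$, so Dirichlet only gives $\Vert 2\theta q\Vert_{\R/\Z}\le 1/X$, and the required strengthening to $\ll(\log X)^2\delta^{-2}X^{-2}$ is exactly the point you wave at ("one more inspection of the $\min$-sum"). That inspection is not a consequence of the counting bound $(X/q+1)(X+q\log q)$ you cite: to conclude that $\beta:=\Vert 2\theta q\Vert_{\R/\Z}$ must be tiny you need to know that the \emph{whole} sum $\sum_{|h|\le X}\min(X,\Vert 2\theta h\Vert_{\R/\Z}^{-1})$ is $\ll X\log X+\beta^{-1}\log X$, i.e.\ that the residue classes $h\not\equiv 0\ (\mathrm{mod}\ q)$ (including the near-exceptional ones $ah\equiv\pm1$) contribute only $O(X\log X)$, so that the lower bound $\gg\delta^2X^2\ge X^{4/3}$ is carried by the terms with $\Vert 2\theta h\Vert_{\R/\Z}$ genuinely small. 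That block-by-block analysis is standard and does make your argument work (and even yields a single power of $\log X$), but as written it is a gap rather than a citation. The simplest repair within your framework is to imitate the paper: run Dirichlet on $2\theta$ with denominator bound $Q\asymp\delta^2X^2/\log X$ instead of $X$; the counting lemma still applies since $|2\theta-a/q|\le 1/(qQ)\le 1/q^2$, the error term $q\log q\le Q\log Q$ is still $o(\delta^2X^2)$, you again get $q\ll\delta^{-2}(\log X)^{O(1)}$, and now $\Vert 2\theta q\Vert_{\R/\Z}\le 1/Q$ gives the stated approximation at once; your final factor-of-$2$ conversion from $2\theta$ to $\theta$ is fine.
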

\begin{proof}
We recall the usual statement of Weyl's inequality (in the case of a quadratic phase).  If $a,q$ are such that $(a,q) = 1$ and $|\theta - a/q| \le 1/q^2$, then 
\begin{equation}\label{weyl-usual} \big|\mb{E}_{x \in [X]} e(\theta x^2) \big| \ll (\log X) \big( q^{-1/2} + X^{-1/2} + q^{1/2} X^{-1}\big). \end{equation}
See, e.g. \cite[Lemma 3.1]{davenport}. In the quadratic case, the factor $X^{o(1)}$ present in that reference can be replaced by $O(\log X)$, since the use of the divisor bound at \cite[Equation (3.3)]{davenport} can be replaced by a trivial bound. (This tighter bound is not at all essential for our argument.)

Now set $Q := c(\delta/\log X)^{2} X^2$ for some small $c > 0$. By Dirichlet's theorem, there is some $q$, $1 \le q \le Q$, such that $|\theta - a/q| \le 1/qQ \le 1/q^2$. Applying \cref{weyl-usual}, for an appropriate $c$ the term $O(q^{1/2} X^{-1} \log X)$ is $\le \delta X/4$, and the term $O(X^{-1/2} \log X)$ satisfies the same bound by the assumption on $\delta$. It follows that $\delta X/2 \ll (\log X) q^{-1/2}$, so $q$ satisfies the required bound. Noting that $\Vert \theta q \Vert_{\R/\Z} \le Q^{-1}$ completes the proof.
\end{proof}

We now define our smoothly-weighted version of the squares.
\begin{definition}\label{def:weight-square}
Let $w$ be as in \cref{lem:bump}. Define $g_{X,\Box}:\Z \rightarrow \R$ via
\[g_{X,\Box}(x) := \left\{ \begin{array}{ll}
0 & \text{ if $|x|$ is not a square }\\
t w(t^2/X) & \text{ if } |x| = t^2\text{ with }t\in \N.
\end{array}\right.\]
\end{definition}

Here, and in what follows, write
\[ \wh{g_{X,\Box}}(\theta) := \sum_{x}g_{X,\Box}(x)e(-\theta x). \]
The estimates we need concern minor- and major- arc values of $\theta$ respectively. First, the minor arc estimate.
\begin{lemma}\label{minor-arc}
Let $X$ be a sufficiently large parameter. Suppose that $\delta \ge X^{-1/10}$ and that $|\wh{g_{X,\Box}}(\theta)|  \ge \delta X$.
Then there exists a natural number $q\ll (\log X)^2 \delta^{-4}$ such that 
\[\snorm{q\theta}_{\R/\Z}\ll (\log X)^2 \delta^{-4} X^{-1}.\]
\end{lemma}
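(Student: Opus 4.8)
The goal is to deduce a "minor arc" statement for the smooth square-weight $g_{X,\Box}$ from the Weyl-type input \cref{weyl-consequence}, which is phrased for the pure exponential sum $\mb{E}_{x \in [X]} e(\theta x^2)$. First I would unfold the definition of $g_{X,\Box}$: writing $|x| = t^2$ with $t \in \N$, and noting that $w(t^2/X)$ is supported on $t \le \sqrt{X}$, we get
\[ \wh{g_{X,\Box}}(\theta) = \sum_{t \ge 1} t\, w(t^2/X)\big(e(-\theta t^2) + e(\theta t^2)\big) = 2\sum_{t \ge 1} t\, w(t^2/X)\cos(2\pi \theta t^2), \]
so in particular $|\wh{g_{X,\Box}}(\theta)| \le 2\big|\sum_{t} t\, w(t^2/X) e(\theta t^2)\big|$. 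The plan is to remove the smooth weight $t\, w(t^2/X)$ by partial summation (Abel summation), so as to replace the weighted sum by a bound involving the unweighted sums $\sum_{t \le T} e(\theta t^2)$ for $T \le \sqrt{X}$, to which \cref{weyl-consequence} applies.

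The key steps, in order: (1) Let $S(T) := \sum_{1 \le t \le T} e(\theta t^2)$. Write $\phi(t) := t\, w(t^2/X)$; this is a smooth function supported on $[0, \sqrt{X}]$ with $\phi(t) \ll \sqrt{X}$ and derivative $\phi'(t) = w(t^2/X) + (2t^2/X) w'(t^2/X) \ll 1$ (using boundedness of $w, w'$), hence total variation $\int_0^{\sqrt X} |\phi'(t)|\,dt \ll \sqrt{X}$. (2) By Abel summation, $\sum_t \phi(t) e(\theta t^2) = -\int_0^{\sqrt X} S(t)\,\phi'(t)\,dt$ (boundary terms vanish since $\phi$ is compactly supported in $(0,\sqrt X)$ — or are trivially controlled), so $\big|\sum_t \phi(t) e(\theta t^2)\big| \le \big(\sup_{T \le \sqrt X} |S(T)|\big)\cdot \int_0^{\sqrt X}|\phi'(t)|\,dt \ll \sqrt{X}\, \sup_{T \le \sqrt X}|S(T)|$. (3) Combining with step one, the hypothesis $|\wh{g_{X,\Box}}(\theta)| \ge \delta X$ forces $\sup_{T \le \sqrt X}|S(T)| \gg \delta \sqrt{X}$, i.e. there is some $T \le \sqrt{X}$ with $|\mb{E}_{x \in [T]} e(\theta x^2)| = |S(T)|/T \gg \delta \sqrt{X}/T \ge \delta$ (since $T \le \sqrt X$). (4) Apply \cref{weyl-consequence} with this $T$ in the role of "$X$" and $\delta' \asymp \delta\sqrt{X}/T$ in the role of "$\delta$". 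One must check the hypothesis $\delta' \ge T^{-1/3}$: since $\delta \ge X^{-1/10}$ and $T \le \sqrt X$ we have $\delta' \ge \delta \ge X^{-1/10} \ge T^{-1/5} \ge T^{-1/3}$ for $X$ large, unless $T$ is very small, in which case one can bound $|S(T)| \le T$ trivially and see $T \gg \delta\sqrt X \ge X^{2/5}$, so in fact $T$ is never small. This yields $q \ll (\log T)^2 \delta'^{-2} \ll (\log X)^2 (T/(\delta\sqrt X))^2 \cdot \text{(something)}$; then estimating $\delta'^{-2} = T^2/(\delta^2 X) \le X/(\delta^2 X) = \delta^{-2}$... wait, that gives $q \ll (\log X)^2 \delta^{-2}$, which is even better than claimed, but the $\Vert q\theta\Vert$ bound from \cref{weyl-consequence} is $\ll (\log T)^2\delta'^{-2} T^{-2} \ll (\log X)^2 \delta^{-2} X^{-1}$ after substituting $\delta'^{-2}T^{-2} = (\delta^2 X)^{-1}$. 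So the $\delta^{-4}$ in the statement is a (deliberately non-tight) slack absorbing the various losses; I would simply use the cruder bookkeeping $q \ll (\log X)^2\delta^{-4}$ and $\Vert q\theta\Vert_{\R/\Z} \ll (\log X)^2\delta^{-4}X^{-1}$, which follows a fortiori.

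**Main obstacle.** The genuinely delicate point is the bookkeeping between the rescaled parameter $T$ and the original $X$: one must track carefully how $\delta'$, the length $T$, and the resulting modulus $q$ and displacement $\Vert q\theta\Vert$ transform, and in particular verify the hypothesis $\delta' \ge T^{-1/3}$ of \cref{weyl-consequence} holds across the whole range $T \le \sqrt X$ (this is where the stronger assumption $\delta \ge X^{-1/10}$, rather than $X^{-1/3}$, gets used — it gives room for the loss incurred when $T$ is as small as $\asymp \delta\sqrt X$). The partial summation itself and the final conversions are routine. The reason the exponent degrades from $\delta^{-2}$ (what one would get tracking constants sharply) to the stated $\delta^{-4}$ is precisely to have a clean uniform bound without case analysis on the size of $T$; since these estimates are only ever used with $\delta$ of shape $\exp(-(\log X)^{O(1)})$, the polynomial loss in $\delta$ is irrelevant downstream.
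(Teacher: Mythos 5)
Your argument is correct and reaches the stated conclusion (in fact a slightly stronger one), but it handles the smooth weight differently from the paper. The paper expands $w(t^2/X)$ via Fourier inversion, uses the decay of $\wh{w}$ to restrict to frequencies $|\xi| \le C/\delta$, pigeonholes in $\xi$, removes only the factor $2t$ by partial summation, applies Weyl to the shifted phase $-\theta + \xi/X$, and finally transfers back to $\theta$ by the triangle inequality; the restriction to a segment of length $N \gg \delta X^{1/2}$ and the frequency shift each cost a factor of $\delta$, which is exactly where the stated $\delta^{-4}$ comes from. You instead absorb the entire weight $\phi(t) = t\,w(t^2/X)$ into a single Abel summation, using only that $w$ and $w'$ are bounded (no Fourier decay of $\wh w$ is needed), so there is no frequency shift at all; moreover the loss from a short segment cancels, since $\delta' T \asymp \delta\sqrt{X}$ gives $\delta'^{-2}T^{-2} \asymp \delta^{-2}X^{-1}$, and you end up with $q \ll (\log X)^2\delta^{-2}$ and $\Vert q\theta\Vert_{\R/\Z} \ll (\log X)^2\delta^{-2}X^{-1}$, which implies the lemma a fortiori. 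The trade-off is that the paper's treatment keeps the handling of $w$ uniform with the major-arc estimate (where the decay of $\wh w$ genuinely matters), while yours is more elementary and sharper in $\delta$ for this particular lemma.

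One small repair: the chain $\delta' \ge \delta \ge X^{-1/10} \ge T^{-1/5}$ is backwards, since $T \le \sqrt{X}$ gives $T^{-1/5} \ge X^{-1/10}$, not the reverse; and even using $T \gg \delta\sqrt{X} \ge X^{2/5}$ the comparison with $T^{-1/5}$ fails ($2/25 < 1/10$). The hypothesis of the Weyl lemma should instead be checked directly against $T^{-1/3}$: from $T \gg \delta\sqrt{X}$ one gets $T^{-1/3} \ll \delta^{-1/3}X^{-1/6}$, and $\delta' \gg \delta \ge X^{-1/10}$ beats this precisely because $\delta \ge X^{-1/10} \gg X^{-1/8}$ (equivalently, $T \gg X^{2/5}$ gives $T^{-1/3} \ll X^{-2/15}$ and $2/15 > 1/10$). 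Since you have already established $T \gg \delta\sqrt{X}$, this is a one-line fix and the proof stands. (Also note $\delta' \gg \delta$ rather than $\delta' \ge \delta$ on the nose, which is immaterial.)
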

\begin{proof} In this proof $C$ denotes a sufficiently large constant which may depend on the choice of function $w$ in \cref{lem:bump} but is independent of all other parameters. 

Writing in the definition of $g_{X,\Box}$, we see that the assumption is
\[ 2\Big|  \Re  \sum_{1 \le t \le X^{1/2}}   t w \Big(\frac{t^2}{X}\Big) e(-\theta t^2)  \Big|\ge \delta X.\]
Expanding $w$ via Fourier inversion and applying the triangle inequality, we obtain
\[ \int^{\infty}_{-\infty} \big|\wh{w}(\xi)\big|  \Big| \sum_{1 \le t \le X^{1/2}} 2t e \Big(\Big(-\theta + \frac{\xi}{X}\Big) t^2 \Big) \Big|d\xi \ge \delta X.\] By \cref{lem:bump} and the trivial bound of $2X$ for the inner exponential sum, the contribution from $|\xi| \ge C/\delta$ is at most $\delta X/2$ (a much weaker estimate on $\wh{w}$ than the one in \cref{lem:bump} would suffice here). Since $\int |\wh{w}(\xi)| d\xi \le C$, it follows that there is some $\xi$ with $|\xi| \le C/\delta$ such that 
\[ \Big| \sum_{1 \le t \le X^{1/2}} 2t e \Big(\Big(-\theta + \frac{\xi}{X}\Big) t^2 \Big) \Big| \ge \frac{\delta X}{2C}.\] By partial summation this may be written
\[ 2 X^{1/2} \Big| \int^1_0 dy  \sum_{y X^{1/2} \le t \le X^{1/2}}  e \Big(\Big(-\theta + \frac{\xi}{X}\Big) t^2 \Big) \Big| \ge \frac{\delta X}{2C}, \] so for some $y \in [0,1]$ we have
\[ \Big| \sum_{y X^{1/2} \le t \le X^{1/2}} e \Big(\Big(-\theta + \frac{\xi}{X}\Big) t^2 \Big) \Big| \ge \frac{\delta X^{1/2}}{4C}.\] Thus, by the triangle inequality, there is some $N \le X^{1/2}$ (we can take either $N = X^{1/2}$ or $N = y X^{1/2}$) such that 
\begin{equation}\label{pre-weyl} \Big| \sum_{t \le N} e \Big(\Big(-\theta + \frac{\xi}{X}\Big) t^2 \Big) \Big| \ge \frac{\delta X^{1/2}}{8C} \ge \frac{\delta N}{8C}.\end{equation}
We now apply Weyl's inequality, \cref{weyl-consequence}. The condition $\delta/8C \ge N^{-1/3}$ required for this application follows from the assumption $\delta \ge X^{-1/10}$, the fact that $N \ge \delta X^{1/2}/8C$ (which follows from the left-hand inequality in \cref{pre-weyl}) and the fact that $X$ is sufficiently large. We conclude that there is some $q \ll (\log X)^2\delta^{-2}$ such that 
\[ \Big\Vert q \Big(-\theta + \frac{\xi}{X} \Big) \Big\Vert_{\R/\Z} \ll (\log X)^2 \delta^{-2} N^{-2} \ll (\log X)^2 \delta^{-4} X^{-1}.\] From the triangle inequality and the fact that $|\xi| \le C/\delta$, this implies
\[ \Vert q \theta \Vert_{\R/\Z} \ll (\log X)^2 \delta^{-4} X^{-1},\] which is the desired result.
\end{proof}

Now we turn to the major arc estimate.
\begin{lemma}\label{major-arc}
Let $X$ be a sufficiently large parameter. Let $q$ be a positive integer with $q \le X^{1/8}$ and suppose that $a$ is coprime to $q$. Suppose that $|\theta| \le X^{-7/8}$. Then 
\[\Big|\wh{g_{X,\Box}}\Big(\frac{a}{q} + \theta \Big)\Big|\ll Xq^{-1/2} e^{-|\theta X|^{1/2}}+ X^{3/4}.\]
\end{lemma}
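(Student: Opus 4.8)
The plan is to expand $\wh{g_{X,\Box}}(a/q + \theta)$ directly from \cref{def:weight-square}, writing the sum over $t \in \N$ with $x = t^2$, and then to split $t$ into residue classes modulo $q$. Writing $t = qr + b$ with $0 \le b < q$, we have $t^2 \equiv b^2 \md{q}$, so $e(-(a/q)t^2) = e(-ab^2/q)$ depends only on $b$. This gives
\[ \wh{g_{X,\Box}}\Big(\frac{a}{q} + \theta\Big) = \sum_{b \md{q}} e\Big(-\frac{ab^2}{q}\Big) \sum_{r \ge 0} (qr+b) w\Big(\frac{(qr+b)^2}{X}\Big) e(-\theta(qr+b)^2). \]
The inner sum over $r$ is a smooth sum which I would estimate by Poisson summation (or Euler--Maclaurin), comparing it to the integral $\frac{1}{q}\int_0^\infty u \, w(u^2/X) e(-\theta u^2)\, du$ plus error terms. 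The main term, after summing $e(-ab^2/q)$ over $b \md{q}$, produces a Gauss sum, which has modulus $\ll q^{1/2}$; since the main term carries a factor $1/q$ out front and the integral is of size $\ll X$ (trivially) but in fact $\ll X e^{-|\theta X|^{1/2}}$ after a stationary-phase/Fourier-decay analysis, this yields the first term $X q^{-1/2} e^{-|\theta X|^{1/2}}$.

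More precisely, for the integral $I(\theta) := \int_0^\infty u\, w(u^2/X) e(-\theta u^2)\, du$, the substitution $v = u^2$ gives $I(\theta) = \tfrac12 \int_0^\infty w(v/X) e(-\theta v)\, dv = \tfrac{X}{2}\wh{w}(\theta X)$ (up to the sign/normalisation convention for $\wh{w}$), so the Fourier estimate $|\wh{w}(t)| \ll e^{-\sqrt{|t|}}$ from \cref{lem:bump} directly gives $|I(\theta)| \ll X e^{-|\theta X|^{1/2}}$. This is the clean reason the exponential decay appears, and it is why the lemma is phrased with $w$ having that particular Fourier decay.

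The error terms from Poisson summation applied to $\sum_r (qr+b)w((qr+b)^2/X)e(-\theta(qr+b)^2)$ come from the nonzero frequencies: the Poisson dual sum is $\sum_{h \ne 0} \int F(u) e(-hu)\, du$ where $F(u) = (qu+b) w((qu+b)^2/X) e(-\theta(qu+b)^2)$, after rescaling. Since $F$ is supported on an interval of length $\ll X^{1/2}/q$ and $|\theta| \le X^{-7/8}$ with $q \le X^{1/8}$, repeated integration by parts (using that $F$ and its derivatives are controlled: $F$ has size $\ll X^{1/2}$, and each derivative costs a factor $\ll q/X^{1/2} + |\theta| X^{1/2} \cdot q \ll$ something small) shows each such integral is tiny, and summing over $h \ne 0$ and over $b \md q$ contributes $\ll X^{3/4}$ comfortably (in fact much smaller, but $X^{3/4}$ is a safe bound absorbing all secondary terms, boundary contributions, and the crude handling of the phase $e(-\theta(qr+b)^2)$ versus its expansion). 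I would organise this as: (1) reduce to residue classes mod $q$ and extract the Gauss sum; (2) apply Poisson/Euler--Maclaurin in each class, isolating the $h=0$ term as the main term $\tfrac{1}{q} I(\theta)$; (3) bound the main term via \cref{lem:bump} as $\ll q^{-1/2} X e^{-|\theta X|^{1/2}}$ after summing the Gauss sum; (4) bound all error terms by $\ll X^{3/4}$.

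The main obstacle is step (2): carefully bounding the Poisson error terms for the sum $\sum_r (qr+b)w((qr+b)^2/X)e(-\theta(qr+b)^2)$, since the phase $\theta(qr+b)^2$ is quadratic and one must track how derivatives of the amplitude-times-phase interact with the support of length $\ll X^{1/2}/q$. The ranges $q \le X^{1/8}$ and $|\theta| \le X^{-7/8}$ are exactly what make the quadratic phase's total variation small (of size $\ll |\theta| X \ll X^{1/8}$, which after dividing by $q$ stays manageable) so that a bounded number of integrations by parts suffices; verifying this bookkeeping is the technical heart, but it is routine in the style of standard major-arc analysis (e.g.\ as in \cite{davenport}), and the generous error exponent $X^{3/4}$ leaves ample room.
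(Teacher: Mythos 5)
Your proposal follows essentially the same route as the paper's proof: split $t$ into residue classes mod $q$ to extract the Gauss sum (bounded by $\ll \sqrt{q}$), compare the smooth inner sum in each class with $\tfrac{1}{q}\int_0^\infty u\,w(u^2/X)e(-\theta u^2)\,du$, and absorb all secondary terms into $X^{3/4}$ using $q\le X^{1/8}$ and $|\theta|\le X^{-7/8}$ --- the only difference being that the paper uses a simple first-order sum--integral comparison (error $\ll X^{1/2}q|\theta X|\le X^{3/4}$) where you invoke Poisson summation, which is a cosmetic variation. One fine point, present in the paper's write-up as well and worth being aware of: after the substitution $v=u^2$ the integral is the one-sided transform $\tfrac{X}{2}\int_0^1 w(s)e(-\theta X s)\,ds$, which for $w$ as in \cref{lem:bump} (so $w(0)\ge\tfrac12$) differs from $\tfrac{X}{2}\wh{w}(\theta X)$ by an endpoint contribution of size $\asymp X/|\theta X|$ that is only polynomially small and can couple to the imaginary part of the Gauss sum; this is harmlessly repaired by taking $w$ supported in $(0,1]$ (say in $[\tfrac{1}{32},1]$, still with $w\ge\tfrac12$ on $[\tfrac1{16},\tfrac14]$), which is all that the rest of the argument uses.
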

\begin{proof}
Let $Y \ge 1$ be a real parameter. If $\psi : [0,1] \rightarrow \C$ is continuous on $[0,1]$ and differentiable on $(0,1)$ and if $1 \le q \le Y$ then we have the bound
\begin{equation}\label{sum-integral}  \Big| \frac{q}{Y} \sum_{\substack{1 \le n \le Y \\ n \equiv b \mdsub{q}}} \psi\Big(\frac{n}{Y}\Big) - \int^1_0 \psi(x) dx \Big| \ll \frac{q}{Y} \max( \Vert \psi \Vert_{\infty}, \Vert \psi' \Vert_{\infty}).\end{equation}
To see this, first note that by the mean value theorem and matching summands
\[ \Big|  \sum_{\substack{1 \le n \le Y \\ n \equiv b \mdsub{q}}} \psi\Big(\frac{n}{Y}\Big)- \sum_{\substack{1 \le n \le Y \\ n \equiv b' \mdsub{q}}} \psi\Big(\frac{n}{Y}\Big)\Big| \ll \Vert \psi \Vert_{\infty} + \Vert \psi' \Vert_{\infty}\] for any $b, b'$ (where here the $\Vert \psi \Vert_{\infty}$ term comes from $n < q$ and $n > Y - q$, which may be present in one sum but not the other). Thus, by averaging it is enough to handle the case $q = 1$, which is the standard comparison between sums and integrals.

Let $F : [0,1] \rightarrow \C$ be continuous on $[0,1]$ and differentiable on $(0,1)$. Applying \cref{sum-integral} with $\psi(x) = 2x F(x^2)$ and making a substitution in the integral gives
\[ \Big| \frac{q}{Y} \sum_{\substack{1 \le n \le Y \\ n \equiv b \mdsub{q}}} \frac{2n}{Y} F\Big(\frac{n^2}{Y^2}\Big) - \int^1_0 F(x) dx \Big| \ll \frac{q}{Y} \max\Big( \Vert F \Vert_{\infty}, \Vert F' \Vert_{\infty}\Big).\]
Applying this with $F(x) := w(x) e(-\xi x)$ yields
\begin{equation}\label{to-use-bq}  \Big| \frac{q}{Y} \sum_{\substack{1 \le n \le Y \\ n \equiv b \mdsub{q}}} \frac{2n}{Y} w\Big( \frac{n^2}{Y^2} \Big) e\Big({-\frac{\xi n^2}{Y^2}}\Big) - \wh{w}(\xi) \Big| \ll \frac{q |\xi|}{Y} \max\big( \Vert w \Vert_{\infty}, \Vert w' \Vert_{\infty}\big) \ll \frac{q |\xi|}{Y}.\end{equation}
As in the proof of \cref{minor-arc},
\[ \wh{g_{X, \Box}}\Big(\frac{a}{q} + \theta\Big) = 2 \Re \sum_{1 \le n \le X^{1/2}}   n w \Big(\frac{n^2}{X}\Big) e\Big({-\Big(\frac{a}{q} + \theta\Big) n^2}\Big).\] Setting $Y := X^{1/2}$ and $\xi := \theta X$ and splitting into residue classes modulo $q$, we obtain
\[ \wh{g_{X, \Box}}\Big(\frac{a}{q} + \theta\Big) = \Re  \sum_{b \mdsub{q}} e\Big({-\frac{ab^2}{q}}\Big) \sum_{\substack{1 \le n \le Y \\ n \equiv b \mdsub{q}}}  2n  w \Big(\frac{n^2}{Y^2}\Big) e\Big({-\frac{\xi n^2}{Y^2}}\Big).\] By \cref{to-use-bq},
\[ \wh{g_{X, \Box}}\Big(\frac{a}{q} + \theta\Big) = \frac{X}{q}\Re \Big(\wh{w}(\xi)  \sum_{b \mdsub{q}} e\Big({-\frac{ab^2}{q}}\Big) \Big) + O(X^{1/2} q |\xi|).\] 
To conclude, we apply the standard Gauss sum estimate
\[\Big| \sum_{b \mdsub{q}} e\Big({-\frac{ab^2}{q}}\Big) \Big| \ll \sqrt{q}, \]
which may be found (for example) as \cite[Lemma 6.4]{davenport} and the estimate on $\wh{w}(\xi)$ from \cref{lem:bump}.
\end{proof}

\section{Density increment arguments}\label{sec6}

As explained in the introduction, we will use the density increment strategy common to previous works on the problem.  Suppose that $A \subset [X]$ is a set with density $\alpha$ and no square difference. We will establish a variety of different types of density increment, which we summarise in the following proposition.

Constants denoted by $c, c_0, C$ are absolute.

\begin{proposition}\label{density-increment-calc} There are constants $c, C$ with $0 < c < \frac{1}{10}$ and $C > 10$ such that the following holds. Let $X \ge c^{-C}$ and suppose that $X^{-1/100} < \alpha \le \frac{2}{3}$. Suppose that $A \subset [X]$ is a set with density $\alpha$ and no square difference. Then in all cases
\begin{enumerate}
    \item[\textup{($\ast$)}] there is a subprogression of $[X]$ with square common difference and length $\ge \alpha^C X$ on which the density of $A$ is at least $\alpha + \alpha^C$.
\end{enumerate} Moreover, if $\alpha \le c$ then at least one of the following options holds:
 \begin{enumerate}
\item[\textup{(1)}] $\alpha  \le \exp(- c \sqrt{\log X})$; 
\item[\textup{(2)}] for some $m \in \N$, there is a subprogression $P$ with square common difference and length $\ge \alpha^{Cm} X$ on which the density of $A$ is at least $2^{m} \alpha$;
\item[\textup{(3)}] there is a subprogression $P$ with square common difference and length $\ge \alpha^{C} X$ on which the density of $A$ is at least $(1 + c)\alpha$.
\end{enumerate}
\end{proposition}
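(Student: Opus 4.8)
The plan is to run the standard density increment for Furstenberg--S\'ark\"ozy, using \cref{thm:level-d-int} as the Fourier-analytic engine together with the exponential sum bounds of \cref{sec5}. I would start from the weighted counting operator for square differences: for $A\subseteq[X]$ of density $\alpha$ with no square difference, the quantity $\sum_{x} (1_A * \widetilde{1_A})(x)\, g_{X,\Box}(x)$ (where $\widetilde{1_A}(x) = 1_A(-x)$, restricting $A$ to a suitable dyadic-length window so that the smooth weight sees it) vanishes, because $A$ has no square difference and $g_{X,\Box}$ is supported on $\pm$squares. Expanding on the Fourier side, this gives $0 = \int_{\R/\Z} |\widehat{1_A}(\theta)|^2 \widehat{g_{X,\Box}}(-\theta)\, d\theta$; isolating the $\theta = 0$ term (which is $\asymp \alpha^2 X \cdot X = \alpha^2 X^2$ up to the mass of $g_{X,\Box}$, namely $\|g_{X,\Box}\|_1 \asymp X$) forces the remaining integral to be large in absolute value, i.e. $\int_{\theta\neq 0}|\widehat{1_A}(\theta)|^2 |\widehat{g_{X,\Box}}(\theta)|\, d\theta \gg \alpha^2 X^2$.

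Next I would dissect the circle. Using \cref{minor-arc}: if $\theta$ lies on a genuine minor arc (no good rational approximation with small denominator), then $|\widehat{g_{X,\Box}}(\theta)| \le \delta X$ with $\delta$ a small power of $\log$, so by Parseval the total minor-arc contribution is at most $\delta X \cdot \|\widehat{1_A}\|_2^2 = \delta X \cdot \alpha X = \delta \alpha X^2$, which is negligible if $\delta \ll \alpha$. Hence the mass concentrates on major arcs $\theta = a/q + \beta$ with $q$ bounded by a power of $\log(1/\alpha)$ and $|\beta| \le X^{-7/8}$ say; \cref{major-arc} controls $\widehat{g_{X,\Box}}$ there, showing it behaves like $(X/\sqrt q)\widehat w(\beta X)$ times a Gauss sum, so the effective support in $\beta$ is an interval of length $\asymp X^{-1}\log(1/\alpha)^{O(1)}$ around each $a/q$. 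The conclusion is a ``restricted $L^2$ mass'' statement: for a set $\mc{Q}_0$ of moduli $q$ of size $\le \log(1/\alpha)^{O(1)}$,
\[
\sum_{q \le \log(1/\alpha)^{O(1)}} \frac{1}{\sqrt q} \sum_{\substack{a\md q\\(a,q)=1}} \sum_{|\beta|\le X^{-1}\log(1/\alpha)^{O(1)}} |\widehat{1_A}(a/q + \beta)|^2 \gg \alpha^2 X.
\]
This is precisely the kind of ``$L^2$ energy on a structured frequency set'' that \cref{thm:level-d-int} is designed to either bound or convert into a density increment.

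To apply \cref{thm:level-d-int} I would choose $\mc{Q}$ to be a set of pairwise coprime integers (primes, essentially) in a dyadic range just below $X^{1/32\log(1/\alpha)}$ with $\prod q \ge X^2$ — possible since there are $\gg$ many such primes. Then for each level $d \le 2^{-7}\log(1/\alpha)$ the left side of \cref{main41} controls the level-$d$ part of the frequency mass. Matching the $\beta$-smearing: one localizes $\widehat{1_A}$ near rationals $a/\prod_{q\in S}q$ by a further pigeonholing in $\beta$, or by working with $1_A$ restricted to a short progression so that a single rational sees all nearby $\beta$; I'd handle this by the usual trick of passing to a subprogression of length $\asymp X^{1-o(1)}$ on which $\widehat{1_A}$ near $a/q$ is essentially constant in the relevant $\beta$-window, absorbing the loss into option (4)'s $(3\log X)^{-2k}$ length factor. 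With the mass exceeding the bound $\alpha^2 X^2 (C_0\log(1/\alpha)/d)^d$ for a suitable $d$ (chosen so that $(C_0\log(1/\alpha)/d)^d$ is a small power of $X$, e.g. $d \asymp \log(1/\alpha)/\text{something}$, which is exactly why $(\log X)^{1/4}$ appears: balancing $(\log(1/\alpha))^{d}$-type losses against $d$ iterations of the increment), \cref{thm:level-d-int} yields a progression $P$ with $q := \prod_{q\in S} q \le X^{1/32}$ a product of $\le 2\log(1/\alpha)$ primes, on which the density of $|1_A|$ exceeds $2^{|S|}\alpha$. Squaring the common difference (replace $P$'s common difference $m$ by $m^2$, losing at worst a factor $m \le X^{1/32}$ in length and at most a factor $m$ in density, hence harmless) converts this to a subprogression with \emph{square} common difference, giving options (2), (3) or (4) depending on the size of $|S|$ and the length lost. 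Option ($\ast$) is the crude, always-available increment: take any level $d=1$ and note that either all of $A$'s Fourier mass near nonzero rationals is tiny — impossible since the circle-method argument above forces $\gg \alpha^2 X^2$ of it — or a single large Fourier coefficient at some $a/q$ with $q \le \alpha^{-O(1)}$ produces, via the standard $\widehat{1_A}(a/q+\beta)$-large-implies-density-increment-on-a-progression lemma, a progression of length $\gg \alpha^{O(1)} X$ with density $\ge \alpha + \alpha^{O(1)}$, and then square the common difference.

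The main obstacle I anticipate is the bookkeeping in the $\beta$-localization: \cref{thm:level-d-int} wants mass at \emph{exact} rationals $a/\prod_{q\in S}q$, whereas the circle method delivers mass in $\beta$-neighbourhoods, and the neighbourhoods for different $q$ have different widths. Reconciling this — probably by a single pass to a medium-length subprogression (length $X^{1-\epsilon}$) killing the $\beta$ dependence uniformly, which is clean but must be done before invoking the (length-sensitive) hypothesis of \cref{thm:level-d-int} — together with the optimization of the parameters $d$, the prime range, and the dyadic window in $X$ so that all error terms ($X^{3/4}$ from \cref{major-arc}, $\delta\alpha X^2$ from minor arcs, $X^{-1/4}$ from \cref{lem32}) are genuinely lower order, is where the real work lies. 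A secondary subtlety is ensuring that when $|S|$ is as large as $2\log(1/\alpha)$ the resulting density $2^{|S|}\alpha$ does not exceed $1$; this is automatic because each application is really applied at a scale where the current density is at most $c$, and the $k$ in options (2), (4) is then bounded by $O(\log(1/\alpha))$, consistent with the iteration terminating once the density reaches a constant.
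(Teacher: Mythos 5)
Your overall frame (smooth square weight, minor/major arc dissection, then \cref{thm:level-d-int} plus a Fourier-to-progression increment lemma) matches the paper, but there are two genuine gaps at the heart of the argument. First, you assert that the circle method alone concentrates the $L^2$ mass on moduli $q \le \log(1/\alpha)^{O(1)}$. It does not: the minor-arc bound from \cref{minor-arc} only becomes $\le 2^{-9}\alpha X$ once the major arcs include all $q$ up to $\asymp \alpha^{-2}$ (see \cref{major-def}), and the tail $\log(1/\alpha)^{O(1)} < q \le C_1\alpha^{-2}$ can a priori carry the entire $\gg \alpha^2 X^2$ of energy, since trivially each $q$ may contribute $\asymp q^{-1/2}\alpha^2X^2$ and the sum over $q \le \alpha^{-2}$ is $\asymp \alpha X^2$. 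Reducing from polynomial to polylogarithmic moduli is precisely what the paper's \emph{first} application of \cref{thm:level-d-int} achieves (dyadic summation in $q$ against the $q^{-1/2}$ Gauss-sum weight, using the level-$d$ bound for $d \ge \lfloor \log\log X\rfloor$), and a \emph{second} application with a re-engineered $\mc{Q}$ then pushes the mass down to $q \ll \log(1/\alpha)^4$, from which \cref{dens-increment-lem} (the $L^2$ clause) yields option (3). Your sketch has no mechanism for either reduction, and option (3) — the increment that actually drives the $(\log X)^{1/4}$ bound — is never derived.

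Second, your choice of $\mc{Q}$ (primes in a dyadic range just below $X^{1/32\log(1/\alpha)}$) makes \cref{thm:level-d-int} vacuous for this problem: its conclusion concerns Fourier mass only at rationals $a/\prod_{q\in S}q$ with $S \subseteq \mc{Q}$, and no modulus $q \le C_1\alpha^{-2}$ divides a product of such enormous primes, so the frequencies carrying the circle-method energy are invisible to the inequality. The paper's $\mc{Q}$ is built the other way round (see \cref{p1-def}): all primes $\le \log(1/\alpha)^2$ are bundled into a single element $q_1 = (\prod_{p \le L^2} p)^{\lceil 4L\rceil}$, and the remaining elements are maximal prime powers up to $P_{\max} \ge C_1\alpha^{-2}$, precisely so that every relevant modulus has small ``level'' and appears among the frequencies in \cref{main41}. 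Relatedly, the $\beta$-localization you flag as the main obstacle is dispatched in the paper without passing to a subprogression: one fixes a single $\xi$ (using $\int_{-\tau}^{\tau} X e^{-|\xi X|^{1/2}}\,d\xi \ll 1$) and feeds $f(x) = \mathbf{1}_A(x)e(-\xi x)$ into \cref{thm:level-d-int}, which is stated for arbitrary $1$-bounded complex $f$ for exactly this reason. Finally, a small but important slip: when converting to a square common difference one passes by averaging to a subprogression modulo $(\prod_{q\in S}q)^2$ on which the density does not decrease; ``losing a factor $m$ in density'', as you wrote, would destroy the increment.
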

\begin{remarks} The subprogressions in ($\ast$), (2) and (3) are all proper (that is, not equal to $[X]$) since the density of $A$ on them is strictly larger than $\alpha$ in each case. We could easily combine (2) and (3) into one statement (simply by replacing the constant $2$ in (2) by $1 + c$) and in fact we will do this in the proof that \cref{density-increment-calc} implies \cref{thm:main} below. We have left them separate in the statement of \cref{density-increment-calc} to emphasise that they come from slightly different sources, as will become apparent later.
\end{remarks}

The proof of this proposition will occupy most of the rest of the paper. We now show that it implies \cref{thm:main}. 

\begin{proof}[Proof of \cref{thm:main}, assuming \cref{density-increment-calc}] Recall that $s(X)$ denotes the size of the largest subset of $[X]$ containing no square difference. 
For notational brevity, in the following proof it is convenient to write 
\begin{equation}\label{FX-def} F(X) := (\log X)^{1/2} \end{equation} for $X \ge 1$.
Suppose that we have the statement of \cref{density-increment-calc}. We will prove, by strong induction on $X$, that for a suitable positive constant $c_0$ (which will depend on $c, C$) and for all $X \ge 1$ we have 
\begin{equation}\label{sx-bd} s(X) \le X \exp(-c_0 F(X)),\end{equation} which is the statement of \cref{thm:main}. If $1 \le X \le c^{-C}$, this statement is trivial if one takes $c_0$ small enough, so suppose from now on that $X \ge c^{-C}$ and that we have established \cref{sx-bd} for all $X' < X$. 

In several places in what follows, we will need the bound
\begin{equation}\label{f-der-bd}  F(X) - F(e^{-h} X) \le h (\log X)^{-1/2}  \end{equation}
for $h \le \log X$. To see this, note that $F(X) - F(e^{-h} X) = (\log X)^{1/2} - (\log X - h)^{1/2}$. For $0 \le h \le t$ we have
\[  t^{1/2} - (t - h)^{1/2}  =  t^{1/2} \Big( 1 - \Big(1 - \frac{h}{t} \Big)^{1/2} \Big) \le h t^{-1/2} ,\]
where the last step follows from $(1 - x)^{1/2} \ge 1 - x$ for $0 \le x \le 1$. From this, the bound \cref{f-der-bd} follows.

Let $A \subset [X]$ be a set of density $\alpha = s(X)/X$ such that $A$ contains no square difference.
The aim is to show that 
\begin{equation}\label{dens-aim} \alpha \le \exp(-c_0 F(X)) .\end{equation}
In the following discussion, $P$ will always denote a subprogression of $[X]$ with square common difference, length $X' < X$ and on which the density of $A$ is $\alpha' > \alpha$.

Suppose first that $\alpha > c$. Observe that $\alpha \le 2/3$ as any $A\subseteq [X]$ with $|A|>2X/3$ contains a pair of consecutive elements and hence a square difference. Then as $\alpha\in (c,2/3]$ we may apply \cref{density-increment-calc} ($\ast$) to find $P$ with $X' \ge \alpha^C X \ge e^{-h}X$ and $\alpha' \ge \alpha + \alpha^C \ge (1 + \eps) \alpha$, where $h := C \log(1/c)$ and $\eps := 1 + c^{C-1}$. By the inductive hypothesis, 
\[ (1 + \eps) \alpha \le \alpha' \le \exp(-c_0 F(X')) \le \exp(-c_0 F(e^{-h} X)).\] We wish to deduce \cref{dens-aim} from this, which is equivalent to showing
\[ F(X) - F(e^{-h} X) \le \frac{\log(1 + \eps)}{c_0}\] By \cref{f-der-bd} this holds provided that $c_0$ is chosen sufficiently small. Note here that the condition $h \le \log X$ required for the application of \cref{f-der-bd} is satisfied because of the assumption that $X \ge c^{-C}$.

Suppose from now on that $\alpha \le c$. We may assume that $\alpha > X^{-1/100}$, since otherwise \cref{dens-aim} follows immediately. Then one of options \cref{density-increment-calc} (1) -- (3) holds. If (1) holds, we are done provided that $c_0 \le c$ (which we may assume). It remains to treat the cases (2) and (3). We can combine these into one by asserting that there is $P$ with length $X' \ge \alpha^{Cm}X$ on which the density of $A$ is at least $(1 + c)^m$. Set $h := Cm \log(1/\alpha)$. Since the density of $A$ on $P$ cannot exceed $1$, we have $m \le \frac{\log(1/\alpha)}{\log(1 + c)} \le \frac{2}{c} \log(1/\alpha)$ and so $h \le \frac{2C}{c} \log(1/\alpha)^{2}$. If $c_0 \le (c/2C)^{1/2}$, this implies that $h \le \log X$ unless $\alpha \le \exp(-c_0 (\log X)^{1/2})$, that is to say unless \cref{dens-aim} holds. We may therefore assume in what follows that $h \le \log X$.

By the inductive hypothesis,
\[ (1 + c)^m \alpha \le \exp(-c_0 F(X')) \le \exp (-c_0 F(e^{-h} X)) ,\] with $h$ as above. Thus \eqref{dens-aim} follows if we can show that
\begin{equation}\label{to-show-2} F(X) - F(e^{-h} X) \le \frac{m \log (1 + c)}{c_0}.\end{equation}  By \cref{f-der-bd},  it is enough to show
\[ \log(1/\alpha) \cdot (\log X)^{-1/2}  \le \frac{\log (1 + c)}{c_0 C}.\]
If $c_0 \le \big(\frac{\log(1 + c)}{C}\big)^{1/2}$ then this holds unless $\alpha \le \exp(-c_0 (\log X)^{1/2})$. Either way, \cref{dens-aim} holds.\vspace*{8pt}
\end{proof}

\subsection{Fourier coefficients and density increments}

Let $A \subseteq [X]$ be a nonempty set of density $\alpha$ with balanced function $f_A := \mathbf{1}_A - \alpha 1_{[X]}$. In this section we recall some results stating that large Fourier coefficients of $f_A$ allow one to find a long progression on which the density of $A$ is increased. 

\begin{lemma}\label{dens-increment-lem} Let $\eta \in (0,1)$. Let $q \ge 1$ be an integer, and let $\xi \in \R$. Set $T := \max(1, |\xi|X)$. Suppose that either 
\begin{equation}\label{roth-increment} \Big|\wh{f_A}\Big(\frac{a}{q} + \xi \Big)\Big| \ge \eta \alpha X\end{equation} for some $a \in \Z$ or that
\begin{equation}\label{szem-condition} \sum_{a \mdsublem{q}}\Big|\wh{f_{A}}\Big(\frac{a}{q} + \xi \Big)\Big|^2\ge \eta \alpha^2 X^2 .\end{equation}
Then there is a progression $P \subseteq [X]$ with common difference $q^2$ and length $\gg q^{-2}T^{-1} \eta X$ on which the density of $A$ is at least $(1 + \eta/20) \alpha$.
\end{lemma}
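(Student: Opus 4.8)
The plan is to establish the density increment by a standard Fourier-analytic argument, splitting into the two cases according to which hypothesis holds. In both cases the strategy is the same: a large Fourier mass of $f_A$ at a frequency (or set of frequencies) close to rationals with small denominator $q$ forces $f_A$ to correlate with a function that is locally constant on progressions with common difference $q^2$; by pigeonholing over the progression cutting $[X]$ into pieces of the right length we find one progression on which $\sum_{x \in P} f_A(x)$ is large and positive, which translates directly into the density increment. The factor $q^2$ (rather than $q$) appears because we want a progression on which both $e(\xi x)$ is essentially constant \emph{and} the linear phase $e(ax/q)$ is exactly constant; taking common difference $q^2$ and length $\ll q^{-2} T^{-1} \eta X$ achieves both since $a/q$ has period $q \mid q^2$ and $\xi$ varies by $\ll \xi \cdot q^{-2} T^{-1} \eta X \ll q^{-2}\eta$ across such a progression.

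\textbf{Case 1 (single large coefficient, \eqref{roth-increment}).} I would first reduce to $\xi$ being exactly a rational $a/q$ up to a harmless error: partition $[X]$ into arithmetic progressions $P_1, \dots, P_J$ of common difference $q^2$ and length $\asymp q^{-2} T^{-1} \eta X$ (possible since $q^2 T \le X$ once $\eta \le 1$, which we may assume by shrinking). On each $P_j$ the function $x \mapsto e((a/q + \xi)x)$ is within $O(\eta/10)$, say, of a constant $c_j$ of modulus $1$ (using $|\xi| \cdot |P_j| \le T X^{-1}\cdot q^{-2}T^{-1}\eta X = q^{-2}\eta$ and that $a/q$ is exactly periodic mod $q^2$). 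Hence
\[
\eta \alpha X \le \Big| \sum_x f_A(x) e\Big(-\big(\tfrac{a}{q}+\xi\big)x\Big)\Big| \le \sum_{j=1}^{J} \Big| \sum_{x \in P_j} f_A(x)\Big| + \tfrac{\eta}{10}\alpha X.
\]
Since $\sum_{x} f_A(x)$ over all of $[X]$ is $0$ and each $|f_A| \le 1$, a standard argument (adding $\sum_j |\sum_{P_j} f_A|$ to $\sum_j \sum_{P_j} f_A = 0$) shows $\sum_j \big(|\sum_{P_j} f_A| + \sum_{P_j} f_A\big) \ge \frac{9}{10}\eta\alpha X$, so some $P_j$ has $\sum_{x \in P_j} f_A(x) \ge \frac{1}{2}\cdot\frac{9}{10}\cdot\frac{\eta\alpha X}{J}\gg \frac{\eta \alpha}{J}|P_j|\gg \frac{\eta\alpha}{20}|P_j|$ after absorbing constants; recalling $f_A = \mathbf{1}_A - \alpha\mathbf{1}_{[X]}$ this says the density of $A$ on $P_j$ is at least $(1+\eta/20)\alpha$, and $|P_j| \gg q^{-2}T^{-1}\eta X$ as required.

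\textbf{Case 2 ($L^2$ mass, \eqref{szem-condition}).} Here I would expand the left side by Parseval/Plancherel as a count. Writing $F(x) := \mathbf{1}_{\text{period } q^2}$-averaged version, the condition $\sum_{a\bmod q}|\wh{f_A}(a/q+\xi)|^2 \ge \eta\alpha^2 X^2$ is, up to the $e(\xi\cdot)$ twist, a statement that the $\ell^2$-energy of $f_A$ carried by frequencies $\equiv 0 \pmod{1/q}$ is large. Concretely, let $\psi(x) := \frac{1}{q}\sum_{a \bmod q} e(\xi x)e(ax/q) \cdot (\text{something})$ — cleaner: set $h := f_A(x)e(\xi x)$ and note $\sum_{a\bmod q}|\wh h(a/q)|^2 = q \sum_{b \bmod q}\big|\sum_{x \equiv b (q)} h(x)\big|^2 / X^2$-type identity; precisely, $\sum_{a \bmod q}|\wh{h}(a/q)|^2 = q \sum_{b \bmod q}\big|\sum_{x\le X,\, x\equiv b\, (q)} h(x)\big|^2$. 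So for some residue $b$ we get $\big|\sum_{x \equiv b(q)} f_A(x) e(\xi x)\big|^2 \ge \eta\alpha^2 X^2/q^2$, i.e. $\big|\sum_{x\equiv b(q)} f_A(x)e(\xi x)\big| \ge \eta^{1/2}\alpha X/q$. Now I would again chop the progression $\{x \le X: x \equiv b \bmod q\}$ into subprogressions of common difference $q^2$ and length $\asymp q^{-2}T^{-1}\eta X$ on each of which $e(\xi x)$ is nearly constant, apply the same "add the signed and unsigned sums" trick (using that $\sum_{x\equiv b(q)} f_A(x)$ is $O(1)$, not necessarily $0$, but this costs only an additive $O(1)$ which is negligible compared to $\eta^{1/2}\alpha X/q$ provided $X$ is large — one may need a mild lower bound on the relevant quantities, handled by the $\gg$ in the conclusion), and extract a subprogression $P$ of common difference $q^2$ with $\sum_{x \in P} f_A(x) \gg (\eta^{1/2}\alpha/q)|P| / (\text{number of pieces})$. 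The number of pieces here is $\asymp X / (q \cdot q^{-2}T^{-1}\eta X) = qT/\eta$, giving a gain of order $\frac{\eta^{1/2}\alpha}{q}\cdot\frac{\eta}{qT} = \frac{\eta^{3/2}\alpha}{q^2 T}$ per unit length times $|P|$; comparing with $|P| \asymp q^{-2}T^{-1}\eta X$ one checks the density increment is $\gg \eta\alpha$ — I'd be a little careful with whether one loses a power of $\eta$ and should double-check the bookkeeping yields exactly $(1+\eta/20)\alpha$, possibly absorbing constants or weakening to $\eta/c$ for an absolute $c$ if needed (the statement allows $\eta/20$, and the remark after \cref{thm:level-d-int} suggests such constants are flexible).

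\textbf{Main obstacle.} The genuinely delicate point is the bookkeeping in Case 2: tracking the powers of $\eta$, $q$ and $T$ through the Plancherel step and the pigeonholing so that the final length is $\gg q^{-2}T^{-1}\eta X$ and the density is $\ge (1+\eta/20)\alpha$ simultaneously, rather than getting an inferior $\eta^2$ or $\eta^{3/2}$ somewhere. The clean way is to note that \eqref{szem-condition} already gives a single residue class mod $q$ carrying $\gg \eta^{1/2}\alpha X/q$ worth of (twisted) $f_A$-mass, and to realize that $\eta^{1/2}\alpha X / q$ over a progression of length $\asymp X/q$ is a \emph{relative} density deviation of order $\eta^{1/2}\alpha$ on that class, which after the $q^2$-refinement and untwisting propagates to a progression with relative density increment $\gg \eta^{1/2}\alpha$ — then one must check whether the intended bound is really $\eta/20$ or whether this case in fact produces the stronger $\eta^{1/2}/20$; since $\eta < 1$ the weaker $\eta/20$ is certainly implied, so this is safe. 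I would also need the harmless reductions that $\eta$ may be taken below a small absolute constant (else the conclusion is vacuous or follows from $P = [X]$ adjustments) and that $q^2 T \le X$ (else the claimed length is $< 1$ and there is nothing to prove).
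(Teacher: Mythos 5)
Your Case 1 is essentially the paper's argument (partition residue classes mod $q$ into progressions of common difference $q^2$ with diameter $O(T^{-1}\eta X)$, freeze the phase, then use the ``signed plus unsigned'' trick together with $\sum_{x\in[X]}f_A(x)=0$ and pigeonhole); apart from a small slip where you bound the phase variation by $|\xi|\cdot(\text{length})$ rather than $|\xi|\cdot(\text{diameter})=|\xi| q^2\cdot(\text{length})$ (which only changes absolute constants), this case is fine.

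Case 2, however, has a genuine gap. After pigeonholing to a single residue class $b$ with $\big|\sum_{x\equiv b\,(q)}f_A(x)e(\xi x)\big|\ge \eta^{1/2}\alpha X/q$, you run the signed/unsigned trick inside that class, ``using that $\sum_{x\equiv b\,(q)}f_A(x)$ is $O(1)$''. That claim is false: this sum equals $|A\cap\{x\equiv b\}|-\alpha\,|\{x\in[X]:x\equiv b\,(q)\}|+O(1)$, which can be as negative as about $-\alpha X/q$ (for instance if $A$ avoids the class $b$ entirely, in which case $f_A\equiv-\alpha$ there, every subprogression of that class has negative $f_A$-sum, and no density increment exists \emph{within} that class at all --- even though the twisted sum can be of size $\alpha X/q$, e.g.\ when $T\asymp 1$). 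So the trick, which crucially needs the ambient sum to be zero (or at least not substantially negative), cannot be applied after you have localised to one class; and the natural repair of passing to the surplus classes only yields a relative increment of order $\eta^{1/2}\alpha/q$, which falls short of $\eta\alpha/20$ when $q\gg\eta^{-1/2}$. The paper circumvents this with a dichotomy: either some class satisfies $\big|\sum_{x\equiv b\,(q)}f_A(x)e(\xi x)\big|\ge 5q^{-1}\alpha X$, in which case the $\mathbf{1}_A$-count on that class is $\ge 3q^{-1}\alpha X$ and one gets density $\ge 2\alpha$ directly (then passes to a class mod $q^2$ by averaging); or else the $\ell^\infty$ bound combined with the $\ell^2$ hypothesis \eqref{szem-condition} gives the $\ell^1$ bound $\sum_b\big|\sum_{x\equiv b\,(q)}e(-\xi x)f_A(x)\big|\ge\frac{1}{5}\eta\alpha X$ over \emph{all} classes, which is exactly the Case 1 situation (with $\eta$ replaced by $\eta/5$) where the global mean-zero property of $f_A$ on $[X]$ is still available. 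Incidentally, this also explains why no $\eta^{1/2}$ appears in the paper's bookkeeping: the $\ell^2$-to-$\ell^1$ passage is done via the $\ell^\infty$ threshold, not via Cauchy--Schwarz or a single-class pigeonhole.
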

\begin{proof}
If $q^2 T > 2^{-10}\eta X$ then the result is trivial (just take $P$ to be a progression of length $1$ consisting of a point of $A$). Suppose henceforth that
\begin{equation}\label{not-triv-assump} q^2 T \le 2^{-10}\eta X.\end{equation}
We now handle the two conditions \cref{roth-increment} and \cref{szem-condition} in turn, beginning with the former. In this case the lemma states the type of density increment used by S\'ark\"ozy in his original paper \cite{sarkozy-i}. Foliating into progressions to modulus $q$, \cref{roth-increment} implies that 
\begin{equation}\label{altszem-trick-2} \sum_{b \in \Z/q\Z} \Big| \sum_{\substack{ n \in [X] \\ n \equiv b \mdsub{q}}} e(-\xi n) f_A(n) \Big| \ge \eta \alpha X. \end{equation}
Consider any partitioning of $[X]$ into intervals of the following type: for each $b$, partition $\{ n \in [X] : n \equiv b \md{q}\}$ into $k_b$ progressions $I_{j,b}$, $j = 1,\dots, k_b$ of common difference $q^2$ and diameter $\le \frac{1}{32}T^{-1} \eta X$. Applying the triangle inequality to \cref{altszem-trick-2} gives
\begin{equation}\label{altszem-trick-3} \sum_{b \in \Z/q\Z}\sum_{j = 1}^{k_b} \Big| \sum_{n \in I_{j,b}} e(-\xi n) f_A(n) \Big| \ge \eta \alpha X. \end{equation}
For each $j,b$ let $n_{j,b}$ be an arbitrary point of $I_{j,b}$. The error in replacing each $e(\xi n)$ by $e(\xi n_{j,b})$ is at most 
\[  \sup_{j,b}\sup_{n \in I_{j,b}} | e(-\xi n) - e(-\xi n_{j,b}) |\sum_{b \in \Z/q\Z}\sum_{j = 1}^{k_b}  \sum_{n \in I_{j,b}} |f_A(n)|  \le 2\pi | \xi | (\frac{1}{32} T^{-1} \eta X)  \sum_{n \in [X]} |f_A(n)| < \frac{1}{2}\eta\alpha X.\]
It follows from this and \cref{altszem-trick-3} that
\[ \sum_{b \in \Z/q\Z} \sum_{j = 1}^{k_b} \Big| \sum_{n \in I_{j,b}} f_A(n) \Big| \ge \frac{1}{2}\eta \alpha X.\] 
Thus, since $f_A(n)$ sums to $0$ on $[X]$, we have
\[ \sum_{b \in \Z/q\Z} \sum_{j = 1}^{k_b}\Big(  \sum_{n \in I_{j,b}} f_A(n) + \Big| \sum_{n \in I_{j,b}} f_A(n) \Big|\Big) \ge \frac{1}{2}\eta \alpha X.\]
Therefore, by pigeonhole there is some $I_{j,b}$ on which
\[  \sum_{n \in I_{j,b}} f_A(n) + \Big| \sum_{n \in I_{j,b}} f_A(n) \Big| \ge \frac{1}{2}\eta \alpha |I_{j,b}|\] and hence
\begin{equation}\label{dens-inc-7} \sum_{n \in I_{j,b}} f_A(n) \ge \frac{1}{4}\eta \alpha |I_{j,b}|.\end{equation}
Now we specify a partitioning of the above form. First, partition $[X]$ into intervals of length between $\frac{1}{64} T^{-1} \eta X$ and $\frac{1}{32} T^{-1} \eta X$; on each one, and for each $b' \in \Z/q^2 \Z$, the set of elements with $n \equiv b' \md{q^2}$ is a progression of diameter $\le \frac{1}{32} T^{-1} \eta X$ and length $\gg q^{-2} T^{-1} \eta X$. The possibility of this partitioning and the last assertion both follow from \cref{not-triv-assump}. From \cref{dens-inc-7}, it follows that the density of $A$ on at least one of these progressions is at least $(1 + \eta/4)\alpha$, and this completes the analysis of \cref{roth-increment}.

We now turn to the analysis of \cref{szem-condition}. We remark that an equivalent statement to this is implicit in \cite{pintz-steiger-szemeredi}. One may check by expanding and orthogonality of characters that is equal to
\begin{equation}\label{top-p-19} \sum_{a\mdsub{q}}\Big|\wh{f_{A}}\Big(\frac{a}{q} + \xi\Big)\Big|^2 = q\sum_{b \in \Z/q\Z} \Big| \sum_{\substack{ n \in [X] \\ n \equiv b \mdsub{q}}} e(-\xi n) f_A(n) \Big|^2, \end{equation}
 so from the assumption \cref{szem-condition} we have
 \begin{equation}\label{altszem-trick} \sum_{b \in \Z/q\Z} \Big| \sum_{\substack{ n \in [X] \\ n \equiv b \mdsub{q}}} e(-\xi n) f_A(n) \Big|^2 \ge q^{-1}\eta \alpha^2  X^2. \end{equation}
Suppose there is $b$ such that 
\begin{equation}\label{b-big}\Big|\sum_{n\equiv b\mdsub{q}}e(-\xi n)f_A(n)\Big|\ge 5q^{-1}\alpha X.\end{equation}
Since
\[\Big|\sum_{n\equiv b\mdsub{q}}e(-\xi n)f_A(n)\Big|\le q^{-1}\alpha X + 1 + \sum_{n\equiv b\mdsub{q}}\mathbf{1}_A(n),\]
it follows that 
\[ \sum_{n\equiv b\mdsub{q}}\mathbf{1}_A(n) \ge 3 q^{-1} \alpha X,\] and so $A$ has density at least $2\alpha$ on the progression $\{n \in [X] :n \equiv b \md{q}\}$. By a trivial averaging, $A$ also has density at least $2\alpha$ on some progression $\{n \in [X] : n \equiv b' \md{q^2}\}$, which certainly implies the result. Suppose then that \cref{b-big} does not hold for any $b$. Then \cref{altszem-trick} implies that
\[ \sum_{b \in \Z/q\Z} \Big| \sum_{\substack{ n \in [X] \\ n \equiv b \mdsub{q}}} e(-\xi n) f_A(n) \Big| \ge \frac{1}{5}\eta \alpha X. \]
This is exactly \cref{altszem-trick-2} (albeit with $\eta$ replaced by $\eta/5$) and we may conclude as in the analysis of \cref{roth-increment}.
\end{proof}

\subsection{First steps towards the density increment}\label{sec43}

We now proceed towards the proof of \cref{density-increment-calc}, which we will complete in the next section.
In what follows, $C_1$ denotes an absolute constant whose exact nature arises from the implicit constants in \cref{sec5}; the conditions it needs to satisfy will be specified during the course of the proof of \cref{g-fourier} below. We imagine that this constant is fixed, once and for all.

The aim is then to show that if further constants $c, C$ with $0 < c < 1 < C$ are taken then, if $1/c$ and $C$ are sufficiently large, all the statements in \cref{density-increment-calc} are true. 

With this in mind, suppose in what follows that $A \subset [X]$ is a set with no square difference and density $\alpha$. Assume throughout that $X \ge c^{-C}$ and that $X^{-1/100} < \alpha \le \frac{2}{3}$, as in the statement of \cref{density-increment-calc}. In particular, we may always assume that $X$ is sufficiently large, and we will do so without further comment. For notational brevity, we write $L := \log(1/\alpha)$. 

We define the major arcs $\mf{M} \subset \R/\Z$ by 
\begin{equation}\label{major-def} \mf{M}  := \bigcup_{q \le C_1\alpha^{-2}}\bigcup_{\substack{1 \le a < q \\ (a,q) = 1}} \Big[ \frac{a}{q} - \tau, \frac{a}{q} + \tau\Big], \quad \mbox{where} \quad \tau := C_1 L^2 X^{-1}.\end{equation} Set $\mf{m}  := (\R/\Z) \setminus \mf{M}$.
Note that $\mf{M}$ and $\tau$ both depend on $\alpha$, but we do not indicate this explicitly

\begin{lemma}\label{g-fourier} We have $\sup_{\xi \in \mf{m}}|\wh{g_{X,\Box}}(\xi)|\le 2^{-9} \alpha X$.
\end{lemma}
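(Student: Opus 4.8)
The plan is to bound $|\wh{g_{X,\Box}}(\xi)|$ on the minor arcs $\mf{m}$ using the minor arc estimate \cref{minor-arc} together with a contrapositive argument. Specifically, suppose toward a contradiction that $|\wh{g_{X,\Box}}(\xi)| > 2^{-9}\alpha X$ for some $\xi \in \mf{m}$. Set $\delta := 2^{-9}\alpha$; since $\alpha > X^{-1/100}$ (by the reduction at the start of \cref{sec43}), we have $\delta \ge X^{-1/10}$ for $X$ sufficiently large (depending on the absolute constant), so \cref{minor-arc} applies. It yields a natural number $q \ll (\log X)^2\delta^{-4} \ll (\log X)^2\alpha^{-4}$ with $\snorm{q\xi}_{\R/\Z} \ll (\log X)^2\delta^{-4}X^{-1} \ll (\log X)^2\alpha^{-4}X^{-1}$.

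The main point is then to check that such a $\xi$ is forced to lie in the major arcs $\mf{M}$, contradicting $\xi \in \mf{m}$. Writing $a/q$ for the rational with $|q\xi - a| = \snorm{q\xi}_{\R/\Z}$, so that $|\xi - a/q| = q^{-1}\snorm{q\xi}_{\R/\Z} \ll (\log X)^2\alpha^{-4}X^{-1}$, one must verify the two defining constraints of $\mf{M}$ in \cref{major-def}: that $q \le C_1\alpha^{-2}$ and that $|\xi - a/q| \le \tau = C_1\LL^2 X^{-1}$. Here is where the choice of exponents matters: the bound $q \ll (\log X)^2\alpha^{-4}$ from \cref{minor-arc} is \emph{not} obviously $\le C_1\alpha^{-2}$, and similarly $(\log X)^2\alpha^{-4}X^{-1}$ need not be $\le C_1\LL^2 X^{-1}$ for fixed $C_1$. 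So the argument cannot be a direct ``the minor-arc $q$ is a major-arc $q$'' — one has to exploit that $\log X$ is large relative to $\log(1/\alpha)$, or rather iterate. The cleanest route: having found $q$ and $a$ with $q\xi$ close to an integer, we re-examine $|\wh{g_{X,\Box}}(a/q + \theta)|$ with $\theta := \xi - a/q$ using the \emph{major arc} estimate \cref{major-arc} (valid since $q \le X^{1/8}$ and $|\theta| \le X^{-7/8}$, both of which follow from the bounds on $q$ and $\snorm{q\xi}$ once $\alpha > X^{-1/100}$). That gives $|\wh{g_{X,\Box}}(a/q+\theta)| \ll Xq^{-1/2}e^{-|\theta X|^{1/2}} + X^{3/4}$. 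Since the left side is $> 2^{-9}\alpha X$ and $\alpha > X^{-1/100}$, the $X^{3/4}$ term is negligible, so we deduce $q^{-1/2}e^{-|\theta X|^{1/2}} \gg \alpha$, hence \emph{both} $q \ll \alpha^{-2}$ and $|\theta| \ll \LL^2 X^{-1}$. Choosing $C_1$ large enough relative to the implied constants then places $\xi$ in $\mf{M}$, the desired contradiction.

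The step I expect to be the main obstacle is managing the interplay of the two Weyl-type inputs and pinning down the constraints on $C_1$: one needs the minor arc lemma merely to \emph{locate} a candidate denominator $q$ and rational approximation, and then the major arc lemma to \emph{upgrade} the crude bounds $q \ll (\log X)^2\alpha^{-4}$, $|\theta|\ll (\log X)^2\alpha^{-4}X^{-1}$ to the sharp $q \ll \alpha^{-2}$, $|\theta| \ll \LL^2 X^{-1}$ needed to match the definition of $\mf{M}$. Care is required to check that the hypotheses of \cref{major-arc} (namely $q \le X^{1/8}$ and $|\theta| \le X^{-7/8}$) genuinely hold under $\alpha > X^{-1/100}$ and $X \ge C(C_1)$; this is where the precise exponent $1/100$ in the reduction is used (with room to spare). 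Everything else is routine: summing the geometric-type bounds, absorbing the $X^{3/4}$ error, and fixing $C_1$ at the end to dominate all the absolute implied constants that appeared.

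\medskip

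\noindent\emph{Remark.} An alternative, slightly more streamlined presentation avoids explicitly invoking \cref{minor-arc} and instead runs the Weyl/Dirichlet argument once: apply Dirichlet's theorem to get $q \le Q := c\alpha^{4} X$ (say) with $|\xi - a/q| \le 1/qQ$; if $q > C_1\alpha^{-2}$ or $|\xi - a/q| > \tau$ one shows directly via \cref{weyl-consequence}-type bounds that $|\wh{g_{X,\Box}}(\xi)|$ is too small, contradicting the hypothesis. This is essentially what \cref{minor-arc} packages, so using \cref{minor-arc} as a black box and then \cref{major-arc} to sharpen is the shortest path and is the one I would write up.
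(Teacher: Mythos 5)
Your proposal is correct and follows essentially the same route as the paper: the paper's proof also first applies \cref{minor-arc} (with $\delta \asymp \alpha$, using $\alpha > X^{-1/100}$) to place any $\xi$ with a large Fourier coefficient into auxiliary ``wide'' major arcs, and then upgrades via \cref{major-arc} — discarding the $X^{3/4}$ term and reading off $q \ll \alpha^{-2}$ and $|\theta| \ll \LL^2 X^{-1}$ from $\alpha \ll q^{-1/2}e^{-|\theta X|^{1/2}}$ — so that $\xi \in \mf{M}$ for $C_1$ large. Your identification of the two-stage ``locate then sharpen'' structure, and of the hypothesis checks $q \le X^{1/8}$, $|\theta| \le X^{-7/8}$ as the only points needing the exponent $1/100$, matches the paper's argument exactly.
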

\begin{proof}
It is convenient to introduce some auxilliary `wide' major arcs $\tilde{\mf{M}} \subset \R/\Z$ by 
\[  \tilde{\mf{M}}  := \bigcup_{q \ll (\log X)^2 \alpha^{-4}}\bigcup_{\substack{1 \le a < q \\ (a,q) = 1}} \Big[ \frac{a}{q} - \tilde{\tau}, \frac{a}{q} + \tilde{\tau}\Big] , \quad \mbox{where} \quad \tilde{\tau}  := C_1(\log X)^4\alpha^{-8} X^{-1}. \]  Suppose that $|\wh{g_{X,\Box}}(\xi)|\ge 2^{-9}\alpha X$. By \cref{minor-arc}, there is $q \ll (\log X)^2 \alpha^{-4}$ such that $\Vert q \xi \Vert_{\R/\Z} \ll (\log X)^2 \alpha^{-4} X^{-1}$. If $C_1$ is chosen appropriately large, this means that $\xi \in \tilde{\mf{M}}$.

To analyse $\xi \in \tilde{\mf{M}}$ further, we use \cref{major-arc}. By the previous paragraph, there is some $a \in \Z$ such that if we write $\theta = \xi - a/q$ then $|\theta| \le \tilde{\tau}$. Applying \cref{major-arc} with this $\theta$, the required condition $|\theta| \le X^{-7/8}$ is satisfied by the definition of $\tilde\tau$ and the assumption that $\alpha > X^{-1/100}$ (and since $X$ is sufficiently large in terms of $C_1$). We conclude that 
\[ \alpha X \ll X q^{-1/2} e^{-|\theta X|^{1/2}} + X^{3/4}.\]
The $X^{3/4}$ term can be ignored since $\alpha > X^{-1/100}$, so we obtain $\alpha \ll q^{-1/2} e^{-|\theta X|^{1/2}}$, from which we deduce that both $q \le C_1 \alpha^{-2}$ (if $C_1$ is chosen appropriately) and that $\alpha \ll e^{-|\theta X|^{1/2}}$, and thus $|\theta| \ll L^2 X^{-1}$. Thus (again, if $C_1$ is chosen appropriately) $\xi \in \mf{M}$, which is what we wanted to prove.
\end{proof}

\begin{remarks}
At this point, all the conditions $C_1$ needed to satisfy have been specified. Introducing the two types of major arcs (or an equivalent device) is a well-known technique in the circle method, used to deduce a `log-free' variant of Weyl's inequality from the usual one, which is essentially what has been accomplished here. See for instance \cite{wooley}. In our setting it also has the advantage that we can exploit the very smooth weight $w$ and hence take the major arcs $\mf{M}$ to be rather narrow. 
\end{remarks}

Now we turn to the main analysis. Observe that, from the assumption that $A$ has no square difference,
\begin{equation}\label{no-square-diff} \sum_{x,y}\mathbf{1}_{A}(x)\mathbf{1}_{A}(y)g_{X,\Box}(x-y) = 0.\end{equation}
We may assume that
\begin{equation}\label{we-may-assume} \sum_{x, y} \mathbf{1}_A(x) \mathbf{1}_{[X]} (y) g_{X, \Box}(x - y) \ge 2^{-8} \alpha X^2.\end{equation} 
To see this, note that $x - y$ is supported on $\pm t^2$, $t^2 \le X$, by the definition of $g_{X,\Box}$. Thus if \cref{we-may-assume} fails, we have
\[ \sum_{x}\sum_{1 \le t \le X^{1/2}} \mathbf{1}_A(x) 1_{[X]} (x - t^2) t w(t^2/X) \le 2^{-8} \alpha X^2\] (or an essentially identical case with $+ t^2$, which we omit). That is, 
\[ \sum_{1 \le t \le X^{1/2}} t w(t^2/X) \big| A \cap [X - t^2]\big| \le 2^{-8} \alpha X^2.\] In particular, by the support properties of $w$ (given in \cref{lem:bump}) it follows that 
\[ \sum_{X^{1/2}/4 \le t \le X^{1/2} /2} t \big| A \cap [X - t^2]\big| \le 2^{-7} \alpha X^2, \] whence there is some $t \in [\frac{1}{4} X^{1/2}, \frac{1}{2} X^{1/2}]$ such that $|A \cap [X - t^2]| \le \frac{1}{4} \alpha X \le \frac{1}{2}\alpha (X - t^2)$, which implies (after a short calculation) that $|A \cap (X - t^2 , X]| \ge \frac{3}{2}\alpha t^2$. That is, $A$ has density at least $3\alpha/2$ on the interval $(X - t^2, X]$, which has length at least $3X/4$. This is an enormous density increment for $A$, much better than both \cref{density-increment-calc} ($\ast$) and \cref{density-increment-calc} (3).

Therefore \cref{we-may-assume} does hold. Writing $f_A := \mathbf{1}_A - \alpha \mathbf{1}_{[X]}$ for the balanced function of $A$, it follows from \cref{no-square-diff} and \cref{we-may-assume} that we have
\[\Big|\sum_{x,y}\mathbf{1}_{A}(x) f_A(y) g_{X,\Box}(x-y)\Big|\ge 2^{-8}\alpha^2 X^2.\]
Taking Fourier transforms, this implies that 
\begin{equation}\label{no-squares-fourier}\Big|\int_{\R/\Z}\wh{\mathbf{1}_A}(-\xi) \wh{f_A}(\xi) \wh{g_{X,\Box}}(\xi) d\xi\Big|\ge 2^{-8} \alpha^2 X^2.\end{equation}

By Cauchy-Schwarz, Parseval and \cref{g-fourier} we have
\begin{align*}
\Big| \int_{\mf{m}}\wh{\mathbf{1}_A}(-\xi) \wh{f_A}(\xi) \wh{g_{X,\Box}}(\xi) d\xi\Big| & \le \sup_{\xi \in \mf{m}}|\wh{g_{X,\Box}}(\xi)| \Big(\int_{\R/\Z}  |\wh{\mathbf{1}_A}(\xi)|^2 d\xi    \Big)^{1/2} \Big(\int_{\R/\Z}  |\wh{f_A}(\xi)|^2 d\xi    \Big)^{1/2} \\ & \le  2^{-9}\alpha^2 X^2/2, 
\end{align*} so from \cref{no-squares-fourier} we have
\begin{equation}\label{no-squares-fourier-2}\Big|\int_{\mf{M}}\wh{\mathbf{1}_A}(\xi) \wh{f_A}(-\xi) \wh{g_{X,\Box}}(\xi) d\xi\Big|\ge 2^{-9} \alpha^2 X^2.\end{equation}
Write 
\begin{equation}\label{m-prime}  \mf{M} := [-\tau, \tau] \cup \mf{M}', \quad 
\mbox{where} \quad \mf{M}' := \bigcup_{2 \le q \le C_1 \alpha^{-2}}\bigcup_{\substack{1 \le a < q \\ (a,q) = 1}} \Big[ \frac{a}{q} - \tau, \frac{a}{q} + \tau\Big]. \end{equation} Here $\tau = C_1L^2 X^{-1}$ is as in the definition \cref{major-def} of the major arcs.
From \cref{no-squares-fourier-2} we have either 
\begin{equation}\label{case1}
\Big|\int^{\tau}_{-\tau}\wh{\mathbf{1}_A}(\xi) \wh{f_A}(-\xi) \wh{g_{X,\Box}}(\xi) d\xi\Big|\ge 2^{-10} \alpha^2 X^2
    \end{equation}
or 
\begin{equation}\label{case2}
\Big|\int_{\mf{M}'}\wh{\mathbf{1}_A}(\xi) \wh{f_A}(-\xi) \wh{g_{X,\Box}}(\xi) d\xi\Big|\ge 2^{-10}\alpha^2 X^2.\end{equation}
We handle these two cases slightly differently.\vspace*{8pt}

\emph{Case 1:} \cref{case1} holds. Using the trivial bound $|\wh{1_A}(\xi)| \le \alpha X$, it follows from \cref{case1} that
\begin{equation}\label{case1-a} \sup_{|\xi| \le \tau} |\wh{f_A}(\xi)| \int^{\tau}_{-\tau} |\wh{g_{X,\Box}}(\xi)| d\xi \ge 2^{-10}\alpha X.\end{equation}
Now from \cref{major-arc} with $(a,q) = (0,1)$ (noting from the definition \cref{major-def} that $\tau \le X^{-7/8}$ if $X$ is sufficiently large) we have the estimate $|\wh{g_{X,\Box}}(\xi)| \ll X e^{-| \xi X |^{1/2}} + X^{3/4}$ and so 
\[\int^{\tau}_{-\tau} |\wh{g_{X,\Box}}(\xi)| d\xi \ll 1. \] Therefore \cref{case1-a} implies that there is some $\xi$, $|\xi| \le \tau$, such that 
\[ |\wh{f_A}(\xi)| = \Big| \sum_{x \in [X]} f_A(x) e(-\xi x) \Big| \gg \alpha X.\]
To this conclusion, we apply \cref{dens-increment-lem}. Recalling the definition \cref{major-def} of $\tau$, we may take $(a,q) = (0,1)$ and $T \ll L^2$ in that lemma. We conclude that $A$ has density at least $\alpha(1 + c)$ for some absolute $c > 0$ on some progression with common difference $1$ and length $\gg L^{-2} X$. This is a strong density increment, superior to \cref{density-increment-calc} ($\ast$) and \cref{density-increment-calc} (3).\vspace*{8pt}

\emph{Case 2:} \cref{case2} holds. To put ourselves in a position to apply \cref{thm:level-d-int}, it is convenient to replace the balanced function $f_A$ by $\mathbf{1}_A$. For this, we make the simple observation that
\begin{equation}\label{m-prime-balanced} \sup_{\xi \in \mf{M}'} |\wh{f_A}(\xi) - \wh{\mathbf{1}_A} (\xi)| \ll \sup_{\xi \in \mf{M}'} |\wh{\mathbf{1}_{[X]}}(\xi)| \ll X^{-1/2},\end{equation} which follows from evaluating $\wh{\mathbf{1}_{[X]}}(\xi)$ by summing a geometric series, together with the fact that $\Vert \xi \Vert_{\R/\Z} \gg \alpha^{2} \gg X^{-1/2}$ for $\xi \in \mf{M}'$.

Thus, $f_A$ can be replaced by $\mathbf{1}_A$ in \cref{case2} with minuscule error. This, the fact that $\wh{\mathbf{1}_A}(-\xi) = \overline{\wh{\mathbf{1}_A}(\xi)}$, and the triangle inequality imply that 
\[
\int_{\mf{M}'}|\wh{\mathbf{1}_A}(\xi)|^2 \big| \wh{g_{X,\Box}}(\xi)\big| d\xi \ge 2^{-11} \alpha^2 X^2.\]
We estimate $|\wh{g_{X,\Box}}(\xi)|$ from above using \cref{major-arc}, splitting $\mf{M}'$ into the constituent intervals $[\frac{a}{q} - \tau, \frac{a}{q} + \tau]$. This gives
\[ \int^{\tau}_{-\tau} d\xi \sum_{2 \le q \le C_1 \alpha^{-2}}\sum_{\substack{1 \le a < q \\ (a,q) = 1}} \Big|\wh{\mathbf{1}_A}\Big(\frac{a}{q} + \xi\Big)\Big|^2 \Big(q^{-1/2} X e^{-|\xi X|^{1/2}} + X^{3/4} \Big) \gg \alpha^2 X^2. \]
The contribution of the $X^{3/4}$ term is negligible. Since $\int^{\tau}_{-\tau} Xe^{-|\xi X|^{1/2}}d\xi \ll 1$, we obtain that there exists some $\xi$, $|\xi| \le \tau = C_1 L^2 X^{-1}$, such that
\begin{equation}\label{just-before-thm12}  \sum_{2 \le q \le C_1\alpha^{-2}} q^{-1/2}\sum_{\substack{1 \le a < q \\ (a,q) = 1}}\Big|\wh{\mathbf{1}_A}\Big(\frac{a}{q} + \xi\Big)\Big|^2  \gg \alpha^2  X^2.\end{equation} 
The treatment up to this point is rather close to that of S\'ark\"ozy \cite{sarkozy-i}, albeit with an additional smoothing. From \cref{just-before-thm12}, S\'ark\"ozy proceeds to extract a large Fourier coefficient of $A$. So as to handle the case of very large $\alpha$, we will also give this argument, though below we will give a significantly more efficient argument for smaller $\alpha$. (An alternative here would have been to simply quote the main result of \cite{Fur77} or \cite{sarkozy-i} that $s(X) = o(X)$, but since we have already given 99\% of a proof of that statement, this seems unsatisfactory.)

By pigeonhole applied to \cref{just-before-thm12} there is some $a/q$ with $q \ge 2$ and $(a,q) = 1$ such that $|\wh{1_A} \big( \frac{a}{q} + \xi \big)| \gg \alpha^4 X$. By \cref{m-prime-balanced}, $|\wh{f_A} \big( \frac{a}{q} + \xi \big)| \gg \alpha^4 X$. (The double application of \cref{m-prime-balanced} could have been omitted in this part of the argument.) We apply \cref{dens-increment-lem}, taking $T \ll L^2$, which is permissible by the definition \cref{major-def} of major arcs. From this lemma it follows that $A$ has density $\ge \alpha + c'\alpha^4$ on some progression with common difference $q^2$ and length $\gg \alpha^8 L^{-2} X \gg \alpha^9 X^2$. Since $\alpha \le \frac{2}{3}$, for suitable $C$ the density of $A$ on this subprogression is $\ge \alpha + \alpha^C$, and the length of the subprogression is $\ge \alpha^C X$, which is \cref{density-increment-calc} ($\ast$).

\section{Applying the level \texorpdfstring{$d$}{} inequality: bounds for arithmetic energies}
\label{sec7-new}

Our task now is to obtain a density increment as in \cref{density-increment-calc} from \cref{just-before-thm12}. At the end of the last section we established \cref{density-increment-calc} ($\ast$) for all densities $\alpha \le \frac{2}{3}$. The remaining statements of \cref{density-increment-calc} concern the case where $\alpha \le c$ is sufficiently small, so we are free to assume this, and consequently that $L = \log(1/\alpha)$ is sufficiently large, in all that follows.

We will accomplish this task in the next two sections. Our main input will be \cref{thm:level-d-int}, and in this section we develop our main tool for applying it to \cref{just-before-thm12}, which is \cref{n-lem73} below.

The analysis involves a rather careful division of the set of $q$ according to properties of their prime factorizations, and we begin in the next subsection by setting up appropriate nomenclature.

\begin{remark} The reader may be interested to know that, although the final argument is not especially long, it was very far from obvious to us. Indeed, the set of exponents in our main theorem that we have at some point alighted upon during the development of this work is roughly $\{ c, \frac{3}{14}, \frac{1}{4} - o(1), \frac{3}{11},\frac{2}{7}, \frac{3}{10}, \frac{1}{3}-o(1), \frac{1}{3},\frac{2}{5},\frac{1}{2}-o(1), \frac{1}{2}\}$. A detailed proof of the exponent $\frac{1}{4} - o(1)$ may be found in the first arXiv version of the paper \cite{first-version}.
\end{remark}

\subsection{Basic setup and parameters} \label{sec8-1}

Suppose that $q \in \N$ has prime factorization $q = \prod_p p^{v_p}$. Then $q = q_* q'$ where $q_* := \prod_{p : v_p \ge 3} p^{v_p}$ and $q' := \prod_{p : v_p \le 2} p^{v_p}$. We call $q_*$ and $q'$ the cube-full and cube-free parts of $q$ respectively. Note that $(q_*, q') = 1$.

Observe for later use that 
\begin{equation}\label{38-bd} \sum_{t \;\cubefull}t^{-1/2}\le \prod_{p}\Big(1+\sum_{k\ge 3}\frac{1}{p^{k/2}}\Big)\ll 1;\end{equation} the fact that this inequality holds is the reason for using the notion of cube-free, rather than the more familiar notion of square-free, as we will see in the analysis that follows.

Throughout the next two sections, let $\WW$ be a sufficiently large absolute constant. How large it needs to be will be determined at the end of \cref{sec8.2}. Let $L = \log(1/\alpha)$ as usual. Given $q \in \N$ and its cube-free part $q'$:
\begin{itemize}
\item define $k(q')$ to be $1$ plus the number of distinct prime factors of $q'$ which are $> \WW L$;
\item let $d(q')$ be the product of all prime factors of $q'$  which are $\le \WW L$ (with multiplicity, which is $1$ or $2$ for each prime occurring);
\item let $p_2(q') < \cdots < p_k(q')$ be the prime factors of $q'$ which are $> \WW L$.
\end{itemize}

Write \begin{equation}\label{d-def} \mc{D} := (\log L)^{-1} \Z_{\ge 0}.\end{equation} For $t \in \N$, $k \in \Z_{\ge 0}$ and $\vecbeta = (\beta_1,\dots, \beta_k) \in \mc{D}^k$, define 
\[ \mathscr{Q}_{t,k,\vecbeta} := \{ q \in \N:  2 \le q \le C_1 \alpha^{-2} , \; q_* = t,\;  k(q') = k,\; d(q') \sim L^{\beta_1},\; p_i(q') \sim L^{\beta_i} \; \mbox{for $i = 2,\dots, k$}\}. \] 
Here, and in what follows, $x \sim L^{\beta}$ means that $L^{\beta} \le x < eL^{\beta} = L^{\beta + (\log L)^{-1}}$. Note that, since the $\beta_i$ range over $\mc{D}$, the sets $\mathscr{Q}_{t,k,\vecbeta}$ form a partition of the set of all integer $q$ with $2 \le q \le C_1 \alpha^{-2}$, that is to say the set of all $q$ in \cref{just-before-thm12}.
Observe that all but finitely many of the $\mathscr{Q}_{t,k,\vecbeta}$ are empty. More precisely, we may suppose throughout that
\begin{equation}\label{global-conds}  t \le C_1 \alpha^2 \quad \mbox{and} \quad k \le 2L/\log L. \end{equation}
The second bound here follows from the fact that $L$ is sufficiently large and the observation that if $k(q') = k$ then $q$ has $k$ distinct prime factors and so is certainly at least the product of the first $k$ primes, which is $e^{(1 + o(1)) k \log k}$ by the prime number theorem.
Denoting
\begin{equation}\label{beta-ordering} B_k := \{ \vecbeta \in \mc{D}^k : 0 \le \beta_1, \;  1 \le \beta_2 \le \beta_3 \le \cdots \le \beta_k, \quad \mbox{and} \quad \mbox{$L^{\beta_i} \le C_1 \alpha^{-2}$}\} \end{equation} we may assume that $\vecbeta \in B_k$ since, again, if this is not so then the corresponding set $\mathscr{Q}_{t,k, \vecbeta}$ is empty.

\subsection{An arithmetic energy bound}

Let $A \subset [X]$ be a set, as in the statement of \cref{dens-increment-lem}. Let $\xi \in \R$ be a real number with $|\xi| \le \tau = C_1 \alpha^{-2}$ (as for example in \cref{just-before-thm12}). Define
\begin{equation}\label{e-def} E_{t,k,\vecbeta} := \sum_{q \in \mathscr{Q}_{t,k, \vecbeta}} \sum_{\substack{1 \le a < q \\ (a,q) = 1}} \Big| \wh{1_A}\Big(\frac{a}{q} + \xi\Big) \Big|^2. \end{equation} We suppress the dependence on $\xi$ in the notation. The following estimate will be our main application of \cref{thm:level-d-int}.

\begin{lemma}\label{n-lem73}
Suppose that $1 \le r \le k$. If $r \ge 2$, suppose additionally that $\sum_{i = 1}^k \beta_i \le 4k$. Then either $A$ enjoys one of the density increments in \cref{density-increment-calc}, or else
\[ E_{t,k,\vecbeta} \ll \Big( \frac{32r}{\WW} \Big)^{r-1}\Big( \frac{C_0}{k - r} \Big)^{k - r} L^{(\beta_2 + \cdots + \beta_{r}) + k-2r + 1}  \alpha^2 X^2,\]
where the implied constant is absolute. Here, one should interpret $\big( \frac{C_0}{k - r} \big)^{k - r}$ as equal to $1$ in the case $r = k$.
\end{lemma}

Before turning to the proof, we establish some notation and basic results concerning restrictions of $A$ to subprogressions. Let $Q \ge 1$ be an integer and suppose that $b \in \{0,1\dots Q-1\}$. We define the set $A_{Q, b}$ by
\begin{equation}\label{apb-def} 1_{A_{Q,b}}(x') := 1_A(b + Q x').\end{equation}
Since $A$ is supported on $[X]$, $A_{Q,b}$ is supported on $0 \le x' \le \lfloor X/Q\rfloor$. We will use the following identity involving the Fourier coefficients of $A_{Q,b}$, which is essentially the same as \cref{top-p-19}:
\begin{equation}\label{res-fourier} \sum_{\lambda =0}^{Q-1} \Big| \wh{1_A} \Big(\frac{\lambda}{Q} + \theta\Big) \Big|^2 = Q \sum_{b = 0}^{Q-1} \Big| \wh{1_{A_{Q,b}}}(Q\theta)\Big|^2.\end{equation}
To verify the identity, we start with the RHS. We have
\begin{align*}
\wh{1_{A_{Q,b}}}(Q\theta ) & = \sum_{x'} 1_A(b + Qx') e(-Q\theta  x') = \sum_x 1_A(x) 1_{x \equiv b \mdsub{Q}} e \Big({-Q\theta \Big(\frac{x - b}{Q}\Big)} \Big) \\ & = \sum_x 1_A(x) \frac{1}{Q} \sum_{\lambda =0}^{Q-1} e \Big({- \frac{\lambda(x-b)}{Q} }\Big) e \big({-\theta (x - b)} \big) = \frac{1}{Q}e\big(\theta b\big) \sum_{\lambda =0}^{Q-1} \wh{1_A}\Big(\frac{\lambda}{Q} + \theta\Big) e \Big(\frac{\lambda b}{Q}\Big).\end{align*}

Therefore
\begin{align*}
Q\sum_{b  = 0}^{Q-1} \Big| \wh{1_{A_{Q,b}}}(Q \theta) \Big|^2 & = \frac{1}{Q} \sum_{\lambda = 0}^{Q-1}\sum_{\lambda' = 0}^{Q-1} \wh{1_A}\Big(\frac{\lambda}{Q} + \theta\Big)\overline{\wh{1_A}\Big(\frac{\lambda'}{Q} + \theta\Big)} \sum_{b = 0}^{Q-1} e \Big(\frac{(\lambda - \lambda')b}{Q}\Big)  ,  
\end{align*}
which is equal to the LHS of \cref{res-fourier} by orthogonality of additive characters $\md{Q}$.

\begin{proof}[Proof of \cref{n-lem73}]
The key idea is to randomly `sparsify' some of the sets of possible prime factors of elements of $\mathscr{Q}_{t,k,\vecbeta}$. For $r \ge 2$ we choose some random sets of primes $\mathscr{P}_2,\dots, \mathscr{P}_r$ as follows (if $r = 1$ we choose no sets). Set $\delta := (\sum_{i = 2}^r \beta_i)^{-1}$. Independently for each $i = 2,\dots, r$, let $\mathscr{P}_i$ be a random subset of the primes in $[L^{\beta_i}, 2 L^{\beta_i})$ of size $M_i := \lfloor \WW \delta L/\log L\rfloor$. Note that, since $\sum_{i = 2}^k \beta_i \le 4k$ and $\beta_2 \le \cdots \le \beta_k$, we have $\sum_{i = 2}^r \beta_i \le 4k(r-1)/(k-1) \le 8(r-1)$, since $k \ge 2$ in this case; therefore $\delta \ge 1/8r \ge 1/8k$. By \cref{global-conds} it follows that $\delta \ge \frac{\log L }{16 L}$ and so (assuming $W > 16$) we have $M_i \ge W\delta L/2\log L$. In particular, writing $\pi'(x)$ for the number of primes strictly less than $x$,  
\begin{equation}\label{p-prob} \mb{P}(p \in \mathscr{P}_i) = \frac{M_i}{\pi'(2L^{\beta_i}) - \pi'(L^{\beta_i})} \ge \frac{W\delta }{4}L^{1 - \beta_i} \ge \frac{W }{32r}L^{1 - \beta_i},\end{equation} where in the first inequality we used that $\beta_i \ge 1$. Consider the randomly sparsified sum 
\begin{equation}\label{rss} E_{t,k,\vecbeta}(\mathscr{P}_{2},\dots, \mathscr{P}_r) := \sum_{\substack{q \in \mathscr{Q}_{t,k, \vecbeta} \\ p_i(q') \in \mathscr{P}_i \\ i = 2,\dots,r}} \sum_{\substack{1 \le  a < q \\ (a,q) = 1}} \Big|\wh{\mathbf{1}_A}\Big(\frac{a}{q} + \xi\Big)\Big|^2.\end{equation} (This should be compared with \cref{e-def}, which is the same but without the restriction $p_i(q') \in \mathscr{P}_i$.)
By \cref{p-prob} we have
\[ \mb{E} E_{t,k,\vecbeta}(\mathscr{P}_{2},\dots, \mathscr{P}_r) \ge \Big(\frac{\WW}{32r}\Big)^{r-1} L^{r-1 -(\beta_{2} +  \cdots + \beta_r)} E_{t,k,\vecbeta}.\]
Choose a particular instance of the sets $\mathscr{P}_i$ for which 
\begin{equation}\label{instance}  E_{t,k,\vecbeta}(\mathscr{P}_{2},\dots, \mathscr{P}_r) \ge \Big(\frac{W}{32r}\Big)^{r-1} L^{r - 1-(\beta_{2} +  \cdots + \beta_r)} E_{t,k,\vecbeta}.\end{equation}
Set 
\[ Q := t^2 \Big( \prod_{p \le \WW L} p^2 \Big) \cdot \prod_{i = 2}^r \prod_{p \in \mathscr{P}_i} p ^2.\] Recalling the definitions of $\delta$ and $M_i$, and using \cref{global-conds}, we have the bound
\begin{equation}\label{big-P-bd} Q \le t^2 8^{WL} \prod_{i = 2}^r (2L^{\beta_i})^{2M_i} \le t^2 e^{8WL} \ll \alpha^{-4-8W}.\end{equation}
In particular, 
\begin{equation}\label{P-crude} Q < X^{1/10}\end{equation} (say) unless (a stronger bound than) \cref{density-increment-calc} (1) holds, so we may assume this from now on. Now if $q \in \mathscr{Q}_{t,k, p_1,\dots, p_k}$ then $q = t q' = t d p^{\nu_2}_2 \cdots p^{\nu_k}_k$, where $d \mid \prod_{p \le WL} p^2$, $p_i \sim L^{\beta_i}$ and $\nu_i \in \{1,2\}$. If, additionally, $p_i \in \mathscr{P}_i$ for $i = 2,\dots, r$ then $q \mid Q p_{r+1}^2 \cdots p_k^2$, and hence each fraction $\frac{a}{q}$ with $1 \le a < q$ and $(a,q) = 1$ may be written as 
\[ \frac{a}{q} = \frac{\lambda}{Q} +  \sum_{i = r+1}^k \frac{a_i}{p_i^2}\] for some $\lambda$, $1 \le  \lambda < Q$, and $a_i$, $1 \le a_i < p_i^2$. Therefore, from the definition \cref{rss}, we have
\[ E_{t, k,\vecbeta}(\mathscr{P}_{2},\dots, \mathscr{P}_r)  \le \sum_{\substack{p_i \sim L^{\beta_i} \\ i = r_1,\dots, k }}
 \sum_{\substack{1 \le \lambda < Q \\ 1 \le a_i < p_i^2 \\ i = r+1,\dots, k}} \Big| \wh{1_A} \Big( \frac{\lambda}{Q} + \sum_{i = r+1}^{k} \frac{a_i}{p^2_i} + \xi \Big) \Big|^2 .\]
With an application of \cref{thm:level-d-int} in mind, take $\mc{Q}$ to consist of $p^2$ for all primes $p$ with $WL < p \le C_1 \alpha^{-2}$ other than those dividing $t$ or in one or more of the sets $\mathscr{P}_2,\cdots,\mathscr{P}_r$. 
By \cref{res-fourier} (and extending the sum to include $\lambda = 0$, and relaxing the conditions $p_i \sim L^{\beta_i}$ to $p^2_i \in \mc{Q}$) it follows that 
\begin{equation} E_{t, k,\vecbeta}(\mathscr{P}_{2},\dots, \mathscr{P}_r) \le 
Q \sum_{b = 0}^{Q-1} \sum_{p_{r+1}^2 < \dots < p_k^2 \in \mc{Q} }\sum_{\substack{ 1 \le a_i < p_i^2\\ i = r+1,\dots, k}} \Big| \wh{1_{A_{Q, b}}} \Big( Q \Big( \sum_{i = r+1}^k \frac{a_i}{p^2_i} + \xi \Big)\Big) \Big|^2. \label{8281} \end{equation}

Now it is time for the application of \cref{thm:level-d-int}. Take $\mc{Q}$ as above. For each fixed $b$ in \cref{8281}, we seek to bound the remaining sum (that is, over the $p_i^2$ and $a_i$) using \cref{main41}. If $r \le k - 1$, one may observe that this sum is exactly of the form of the LHS of \cref{main41}, with $d = k - r$ and $f : [0, \lfloor X/Q\rfloor ] \rightarrow \C$ defined by $f(x) := 1_{A_{Q, b}}(x) e(-Q \xi x)$. Here, we observe that $Q$ is coprime to each $p_i$, $i = r+1,\dots, k$ by construction, so the $Qa_i$ range over nonzero residues modulo $p_i^2$ as the $a_i$ do. Thus, if \cref{main41} holds in this context (for all $b$) we conclude that 
\begin{equation}\label{e-bd-2}  E_{t, k,\vecbeta}(\mathscr{P}_{2},\dots, \mathscr{P}_r) \ll Q^2 \cdot \alpha^2 \Big( \frac{X}{Q} \Big)^2 \Big( \frac{C_0 L}{k-r} \Big)^{k - r}.\end{equation} (The $\ll$ easily absorbs the small error arising from the two endpoints of $[0, \lfloor X/Q\rfloor]$, which (depending on $b$) might not be in the support of $A_{Q,b}$, as well as the related issue that \cref{thm:level-d-int} is stated for intervals starting at $1$.) Combining with \cref{instance} gives the upper bound stated in the lemma.

It remains to analyse what happens if we do not have \cref{main41} for some $b \in \{0,1\cdots, Q-1\}$.  We first check the conditions in \cref{thm:level-d-int}. The condition $\max_{q \in \mc{Q}} q \le X^{1/32L}$ (with $X$ replaced by $X/Q > X^{1/2}$) is implied by $C_1 \alpha^{-2} \le X^{1/64 L}$, which holds unless \cref{density-increment-calc} (1) holds. The value $d = k - r$ satisfies the required condition $1 \le d \le 2^{-7} L$ in \cref{thm:level-d-int} by \cref{global-conds} (and since $L$ is sufficiently large). By \cref{thm:level-d-int}, the only other way that we could fail to have \cref{main41} (and hence, as already shown, the bound claimed in the present lemma) is if there is some set $S \subseteq \mc{Q}$, $1 \le |S| \le 2L$, and some $u \in \Z$ such that the average of $|f(x)|$ on the progression $\{ x \in [0, X/Q] : x \equiv u \md{\prod_{q \in S} q}\}$ is greater than $2^{|S|} \alpha$.

Recalling definitions of $f$ and of $A_{Q,b}$ (see \cref{apb-def}), this means that $A$ has density at least $2^{|S|}\alpha$ on the subprogression $\{ x \in [X] : x \equiv b + Qu \md{Q \prod_{q \in S} q}\}$. Note that the modulus here is a square. By \cref{big-P-bd}, this gives a density increment of type \cref{density-increment-calc} (2), taking $m = |S|$ and any $C > 6 + 8W$ there. 

In the case $r = k$, a minor modification is required since \cref{thm:level-d-int} does not strictly apply as written. However, one can instead simply use the fact that $\big| \wh{1_{A_{Q,b}}}(Q \xi) \big| \le 2 \alpha$ unless $A_{Q,b}$ has density at least $2 \alpha$ (on a progression of length $\approx X/Q$) and then argue as above, with an extra factor $2$ on the RHS of \cref{e-bd-2} which may of course be absorbed into the $\ll$ notation in the statement of the lemma. This concludes the proof.
\end{proof}

\section{Completing the argument}\label{sec8}

In this section, we complete the task of deducing a density increment as in \cref{density-increment-calc}, assuming \cref{just-before-thm12}, and assuming also that $\alpha \le c$ for sufficiently small absolute constant $c$. Our main tool will be \cref{n-lem73}. We retain the notation and setup from \cref{sec8-1}. Define

\begin{equation}\label{stkbeta-def} S_{t,k, \vecbeta} := \sum_{q \in \mathscr{Q}_{t,k, \vecbeta}} q^{-1/2} \sum_{\substack{ 1 \le a < q \\ (a,q) = 1}} \Big| \wh{1_A}\Big(\frac{a}{q} + \xi\Big) \Big|^2. \end{equation}
We first note that \cref{n-lem73} implies a bound on $S_{t,k,\vecbeta}$ by observing that if $q \in \mathscr{Q}_{t,r, \vecbeta}$ then $q \ge tL^{\beta_1 + \cdots + \beta_r}$
since $t$, $d(q')$ and the $p_i(q')$, $i = 2,\dots,k$ are coprime and divide $q$, and $d(q') \sim L^{\beta_1}$, $p_i(q') \sim L^{\beta_i}$. 
Therefore, for all $r$ with $1 \le r \le k$ we have
\begin{align}\nonumber S_{t,k,\vecbeta} & \ll t^{-1/2}L^{-(\beta_1 + \cdots + \beta_r)/2}E_{t,k,\vecbeta} \\ & \ll t^{-1/2}\big( \frac{32r}{\WW} \big)^{r-1}\big( \frac{C_0}{k - r} \big)^{k - r} L^{(\beta_{2} + \cdots + \beta_r) + k - 2r + 1 - (\beta_1 + \cdots + \beta_k)/2}  \alpha^2 X^2,\label{s-bd-3}\end{align} in each case provided that we do not already have a density increment as in \cref{density-increment-calc}, and in the case $r \ge 2$ provided that we additionally assume $\sum_{i = 1}^k \beta_i \le 4k$. We recall that by convention $\big( \frac{C_0}{k - r} \big)^{k - r} = 1$ when $r = k$.

Write $\delta_0$ for the implied constant in \cref{just-before-thm12}. Thus this result states that
\[ \sum_{2 \le q \le C_1\alpha^{-2}} q^{-1/2}\sum_{\substack{1 \le a < q \\ (a,q) = 1}}\Big|\wh{\mathbf{1}_A}\Big(\frac{a}{q} + \xi\Big)\Big|^2  \ge \delta_0\alpha^2  X^2.\]
Since the sets $\mathscr{Q}_{t,k,\vecbeta}$ partition the set of $q$s, it follows that 
\begin{equation}\label{7972}  \sum_{t \; \cubefull} \sum_{k =1}^{\infty} \sum_{\vecbeta \in \mc{D}^k} S_{t,k,\vecbeta}
 \ge \delta_0\alpha^2  X^2,\end{equation} where here $\mc{D} = (\log L)^{-1} \Z_{\ge 0}$.

\subsection{Discarding large parameters}\label{discard-large-sec}

Before turning to the main analysis, we first dispense with the case $\sum_{i = 1}^k \beta_i > 4k$; we will then be able to access \cref{s-bd-3} for all $r$.

\begin{lemma}\label{large-param-discard}
Suppose that $W$ is sufficiently large. Then either there is a density increment as in \cref{density-increment-calc}, or else the contribution to \cref{7972} from parameter sets with $\sum_{i = 1}^k \beta_i \ge 4k$ is at most $\frac{1}{2}\delta_0 \alpha^2 X^2$.   
\end{lemma}
\begin{proof}
We will use \cref{s-bd-3} with $r = 1$, noting carefully that there is no assumption on $\sum_{i=1}^k \beta_i$ in this case. For fixed $m \in \N$ we consider the contribution from $\sum_{i = 1}^{k} \beta_i = (\log L)^{-1} m$; at the end we will sum over $m \ge 4k \log L$. Recall that the $\beta_i$ are constrained to lie in $(\log L)^{-1} \Z_{\ge 0}$. Therefore the number of tuples $(\beta_1 , \cdots , \beta_k)$ with $\sum_{i = 1}^k \beta_i = (\log L)^{-1} m$ is $\le \binom{m + k - 1}{k - 1} \le 4(em/k)^k$. (Here we are counting without the extra constraints \cref{beta-ordering}; there turns out to be no need to include them here.) By \cref{s-bd-3} with $r = 1$ we 
have
\[  S_{t,k,\vecbeta}  \ll t^{-1/2}\Big( \frac{C_0}{k - 1} \Big)^{k - 1} L^{k -  1}e^{-m/2}  \alpha^2 X^2 \] for each of them, or else a density increment as in \cref{density-increment-calc}. The contribution to \cref{7972} from $\sum_{i = 1}^k \beta_i > 4k$ (summed over all $t$ and $k$) is therefore
\[ \ll \alpha^2 X^2\sum_{t \; \cubefull} t^{-1/2}\sum_{k = 1}^{\infty} \Big( \frac{C_0}{k - 1} \Big)^{k - 1} L^{k - 1}\sum_{m \ge 4k \log L} \Big(\frac{em}{k}\Big)^k  e^{-m /2} .\]
We can ignore the sum over $t$ by \cref{38-bd}. For $m \ge 4k \log L$ we have $L^{k-1} e^{-m/2} \le L^{-1} e^{-m/4}$ and so we may in turn bound this by
\[ \ll \alpha^2 X^2 L^{-1} \sum_{k = 1}^{\infty} (eC_0)^k k^{1-2k} \sum_{m =1}^{\infty} m^k e^{-m/4}.\] 
We have
\[ \sum_{m = 1}^{\infty} m^k e^{-m/4} \le \max_m \big(m^k e^{-m/8}\big) \cdot \sum_{m = 1}^{\infty} e^{-m/8} \le (1 - e^{-1/8})^{-1} (8k/e)^k .\]
Thus we see that the contribution to \cref{7972} from $\sum_{i = 1}^k \beta_i > 4k$ is
\[ \ll \alpha^2 X^2 L^{-1} \sum_{k = 1}^{\infty} (8C_0)^k k^{1-k} \ll \alpha^2 X^2 L^{-1},\] which is at most $\frac{1}{2}\delta_0 \alpha^2 X^2$ since $L$ is sufficiently large.
\end{proof}

\subsection{The contribution of $k \ge 2$} \label{sec8.2}

In this section we show that the contribution of the terms with $k \ge 2$ to \cref{7972} is negligible, provided that $W$ is sufficiently large. For this analysis it seems clearer to stop tracking constants to a certain extent, thus for instance we denote by $O(1)$ an absolute constant which can change from line to line. 

In following the argument, the reader may find it helpful to note that when $\vecbeta = (\beta_1,\dots, \beta_k) = (0,2,\dots,2)$ (and $t = 1$, say) none of the inequalities \cref{s-bd-3} for $1 \le r \le k$ give much of a saving over $\alpha^2 X^2$. It may also be checked that this is the only parameter choice for which this is so. The argument is in a sense an analysis of what happens `near' these critical vectors $\vecbeta$; the reader may find this helpful in motivating the definition \cref{bjk-def}, for instance.

We isolate the following lemma from the proof.

\begin{lemma}\label{partition-lem}
The number $V(s,n)$ of tuples $(n_1, \dots, n_s) \in \Z_{\ge 0}^s$ with $n_1 + \cdots + n_s = n$ and $n_1 \le n_2 \le \cdots \le n_s$ is $\le (n + s^2)^s/(s!)^2$.
\end{lemma}
\begin{proof}
Define new variables $u_i$ by $n_i = u_1 + \cdots + u_i$; then the $u_i$ are non-negative integers satisfying $u_1 + 2u_2 + \cdots + su_s = n$. If $(n_1,\dots, n_s)$ is a tuple counted by $V(s,n)$ then the cube $(u_1,\dots, u_s) + [0,1]^s$ is contained in $\{ (y_1,\dots, y_s) \in \R^s_{\ge 0} : y_1 + 2y_2 +\cdots + sy_s \le n + s^2\}$, which has volume $(n + s^2)^s/(s!)^2$, and these cubes are disjoint for different $(n_1,\dots, n_s)$. 
\end{proof}

We now turn to the main argument. By the results of \cref{discard-large-sec}, the bound \cref{7972} may be replaced by
\begin{equation}\label{7973}  \sum_{t \; \cubefull} \sum_{k =1}^{\infty} \sum_{\substack{\vecbeta \in \mc{D}^k  \\  \sum_{i = 1}^k \beta_i \le 4k }} S_{t,k,\vecbeta}
 \ge \frac{1}{2}\delta_0\alpha^2  X^2,\end{equation} 
where (recall) the definition of $S_{t,k,\vecbeta}$ is \cref{stkbeta-def}.

In this range of parameters, we have access to \cref{s-bd-3} for all $r$ with $1 \le r \le k$. Our plan is to use this to show that the contribution to \cref{7973} from $k \ge 2$ is at most $\frac{1}{4}\delta_0 \alpha^2 X^2$. Then, at the end, we will analyse the contribution from $k = 1$ separately using \cref{dens-increment-lem}.

The handling of the case $k \ge 2$ using \cref{s-bd-3} is a pure estimation problem, but it is slightly delicate. Let us recall that we may assume $\beta_1 \ge 0$ and $1 \le \beta_2 \le \beta_3 \le \cdots \le \beta_k$ (or else $S_{t,k, \vecbeta} = 0$). We further fibre the tuples $\vecbeta = (\beta_1,\cdots, \beta_k)$ according to the value of $j \in \{1,\dots, k\}$, which we define to be the smallest index $j \ge 1$ such that $\beta_{j+1} \ge 2$ (or $j = k$ if no such index exists). For notational brevity set (recalling the definition \cref{beta-ordering} of $B_k$)
\begin{equation}\label{bjk-def}  B_{j,k} := \{ \vecbeta \in B_k :  1 \le \beta_2 \le \dots \le \beta_{j} < 2 ,\;  2 \le \beta_{j+1} \le \dots \le \beta_k\},\end{equation}
where one should ignore the last constraint when $j = k$, and the first one when $j = 1$.
Thus the contribution to \cref{7973} from $k \ge 2$ becomes
\begin{equation}\label{7973-bis}  \sum_{t \; \cubefull} \sum_{k \ge 2}\sum_{j = 1}^k \sum_{\substack{\vecbeta \in B_{j,k} \\ \sum_{i = 1}^k \beta_i \le 4k}}  S_{t,k,\vecbeta}
 .\end{equation} 

To estimate the quantities $S_{t,k,\vecbeta}$, we take the geometric mean of \cref{s-bd-3} at the values $r = 1, k$, and also at the values $r = 1, j, k$. This gives, respectively,
\begin{equation}\label{product-s0}
S_{t,k, \vecbeta} \ll t^{-1/2} \Big( \frac{O(1)}{W}\Big)^{(k-1)/2} \alpha^2 X^2
\end{equation}
and 
\begin{equation}\label{product-s} S_{t,k, \vecbeta} \ll t^{-1/2}\Big(\frac{O(k)}{W}\Big)^{(k-1)/3}  L^{\frac{1}{3} (-\beta_1 + \frac{1}{2}(\beta_2 + \cdots + \beta_j - \beta_{j+1} - \cdots - \beta_k) + k - 2j + 1)}.\end{equation} (In the first estimate we could include an extra factor of $L^{-\frac{1}{2}\beta_1}$, but we do not need it.) Note the exponent of $L$ in \cref{product-s} is always non-positive due to the constraints \cref{bjk-def} on the $\beta_j$. 

For $m \in \Z_{\ge 0}$, we introduce the further refinement
\[ B_{j,k,m} := \Big\{ \vecbeta \in B_{j,k}:  \beta_1 < \frac{m}{\log L}, -\beta_2  - \cdots - \beta_j + \beta_{j+1} + \cdots + \beta_k < 2(k - 2j + 1)  + \frac{m}{\log L}\Big\}. \] Note that $B_{j,k,0} = \emptyset$.  From \cref{product-s0,product-s} we see that 
\[ \max_{\vecbeta \notin B_{j,k,m}} S_{t,k; \vecbeta} \ll  t^{-1/2}  W^{-(k-1)/3}\alpha^2 X^2 \min\Big( 
O(k)^{k} e^{-m/6}, O(1)^{k}\Big).\] By considering, for each $\vecbeta$, the smallest $m$ for which $\vecbeta \in B_{j,k,m}$, since $\vecbeta \notin B_{j, k, m-1}$ it follows that 
\begin{align} \nonumber \sum_{\substack{\vecbeta \in B_{j,k} \\ \sum_{i = 1}^k \beta_i \le 4k}} S_{t,k; \vecbeta} & \ll  \sum_{m = 1}^{\infty} |B_{j,k,m}| \max_{\vecbeta \notin B_{j,k,m-1}} S_{t,k; \vecbeta} \\ & \ll t^{-1/2}W^{-(k-1)/3} \alpha^2 X^2 \sum_{m = 1}^{\infty} |B_{j,k,m}| \min\Big( O(k)^{k} e^{-m/6},  O(1)^{k}\Big).\label{s-bd-22}
\end{align}
To proceed further, we need an upper bound for $|B_{j,k,m}|$. We claim that 
\begin{equation}\label{bjkm- upper}  |B_{j,k,m}| \ll \left\{ \begin{array}{ll}    O(1)^k & \mbox{uniformly for $m \le k^2$} \\ O(1)^k k^{-2k} m^{k+2} & \mbox{uniformly for $m \ge k^2$.} \end{array}\right.\end{equation}
To prove the claim, first make the substitution $x_1 := \beta_1 \log L$, $x_i := |\beta_i - 2|\log L$ for $2 \le i \le k$. Then $(x_1,\dots, x_k) \in \Z_{\ge 0}^k$ and $B_{j,k,m}$ is defined by the conditions
\begin{equation}\label{x1} 0 \le x_1 < m, \;\;  x_2  + \cdots + x_k <  m,\;\; 0 \le x_{j+1} \le \cdots \le x_k, \;\;  0 < x_j \le \cdots \le x_2.\end{equation}
Writing $\ell = x_2 + \cdots + x_j$ and $\ell' = x_{j+1}+\cdots + x_k$, it follows from \cref{partition-lem} (which also gives the definition of $V(\; , \;)$) that we have
\begin{align*} |B_{j,k,m}| & \le m \sum_{\ell +\ell' \le m} V(j-1, \ell) V(k - j, \ell') \\ & \ll m^3 \frac{(m + k^2)^{k-1}}{(j - 1)!^2 (k - j)!^2} \le \frac{m^3 4^{k-1} (m + k^2)^{k - 1}}{(k-1)!^2},\end{align*} where here we used the binomial coefficient bound $\binom{k-1}{j-1} \le 2^{k-1}$. The claim \cref{bjkm- upper} now follows using the inequality $x! \ge (x/e)^x$ with $x = k-1$, and considering the two cases separately. 

Returning to \cref{s-bd-22}, we split the sum into $m \le k^2$ and $m > k^2$. Using the second argument of the $\min(\; , \;)$ in \cref{s-bd-22}, together with the first bound in \cref{bjkm- upper}, the sum over the first range is $\ll t^{-1/2} W^{-(k-1)/3} O(1)^{k}\alpha^2 X^2 $. (Here, of course, the $k^2$ terms in the sum can easily be absorbed into the $O(1)^k$.) To estimate the sum over $m > k^2$ we use the first argument of the $\min(\; , \;)$ and the second inequality in \cref{bjkm- upper}, obtaining a bound of
\[ \ll t^{-1/2} W^{-(k-1)/3} \alpha^2 X^2 O(k)^{k} k^{-2k} \sum_{m \ge k^2} m^{k+2} e^{-m/6} \ll t^{-1/2} W^{-(k-1)/3} \alpha^2 X^2 .\]

Summarising, we have shown that 
\[ \sum_{\substack{\vecbeta \in B_{j,k} \\ \sum_{i = 1}^k \beta_i \le 4k}} S_{t,k,\vecbeta} \ll t^{-1/2} W^{-(k-1)/3} O(1)^k \alpha^2 X^2. \]
Summing this from $j = 1$ to $k$, then over $k \ge 2$, and finally over $t$ cube-full (using \cref{38-bd}), we see that if $W$ is sufficiently large then the quantity \cref{7973-bis}, that is to say the contribution to \cref{7973} from $k \ge 2$, is $\le \frac{1}{4}\delta_0 \alpha^2 X^2$. This means that the contribution to \cref{7973} from $k = 1$ is $\ge \frac{1}{4}\delta_0 \alpha^2 X^2$.

\subsection{Handling $k = 1$} By the analysis of the last subsection, we may now discard all terms with $k \ge 2$ from \cref{7973}, at the expense of replacing $\frac{1}{2}\delta_0$ by $\frac{1}{4}\delta_0$, provided that $W$ is sufficiently large. Fix some $W = O(1)$ for which this holds true.

That is, we may assume that

\begin{equation}\label{7975}  \sum_{t \; \cubefull}  \sum_{\substack{\beta_1 \in \mc{D}  \\  \beta_1 \le 4 }} S_{t,1,\beta_1}
 \ge \frac{1}{4}\delta_0\alpha^2  X^2.\end{equation} 

Unpacking the definition \cref{stkbeta-def} of $S_{t,1,\beta_1}$, it follows that 

 \[  \sum_{t \; \cubefull}  \sum_{\beta_1 \in \mc{D} }\sum_{q \in \mathscr{Q}_{t,1, \beta_1}} q^{-1/2} \sum_{\substack{ 1 \le a < q \\ (a,q) = 1}} \Big| \wh{1_A}\Big(\frac{a}{q} + \xi\Big) \Big|^2 \ge \frac{1}{4}\delta_0\alpha^2  X^2. \]
 Recalling the definitions in \cref{sec8-1}, we see that the sum ranges over all $q$, $2 \le q \le C_1 \alpha^{-2}$ for which the cube-free part $q'$ divides $\prod_{p \le WL} p^2$. That is, 
\[  \sum_{t \; \cubefull} \sum_{\substack{2 \le q \le C_1 \alpha^{-2} \\ q_* = t \\ q' \mid \prod_{p \le WL} p^2}}  q^{-1/2} \sum_{\substack{ 1 \le a < q \\ (a,q) = 1}} \Big| \wh{1_A}\Big(\frac{a}{q} + \xi\Big) \Big|^2 \ge \frac{1}{4}\delta_0\alpha^2  X^2. \]
By \cref{38-bd}, there is some $t \le C_1 \alpha^{-2}$ such that 
\[  \sum_{\substack{2 \le q \le C_1 \alpha^{-2} \\ q_* = t \\ q' \mid \prod_{p \le WL} p^2}} (q')^{-1/2} \sum_{\substack{ 1 \le a < q \\ (a,q) = 1}} \Big| \wh{1_A}\Big(\frac{a}{q} + \xi\Big) \Big|^2 \gg \alpha^2  X^2. \]
This certainly implies that 
\[  \sum_{\substack{q \mid t\prod_{p \le WL} p^2 \\ 2 \le q \le C_1 \alpha^{-2}}} \sum_{\substack{ 1 \le a< q \\ (a,q) = 1}} \Big| \wh{1_A}\Big(\frac{a}{q} + \xi\Big) \Big|^2 \gg \alpha^2  X^2. \] As in \cref{m-prime-balanced} (and since $\alpha > X^{-1/100}$) it follows that 
\[  \sum_{\substack{q \mid t\prod_{p \le WL} p^2 \\  q \le C_1 \alpha^{-2}}} \sum_{\substack{1 \le a < q \\ (a,q) = 1}} \Big| \wh{f_A}\Big(\frac{a}{q} + \xi\Big) \Big|^2 \gg \alpha^2  X^2, \] where $f_A = 1_A - \alpha 1_{[X]}$ is the balanced function of $A$.  Putting all the rationals here over a common denominator of $Q := t \prod_{p \le WL} p^2$, we therefore have

\[  \sum_{b = 0}^{Q-1} \Big| \wh{f_A}\Big(\frac{b}{Q} + \xi\Big) \Big|^2 \gg \alpha^2  X^2. \]  As in \cref{big-P-bd} (and since $W$ is now fixed), we have $Q \ll \alpha^{-O(1)}$.
This puts us in the situation covered by \cref{dens-increment-lem}. In that lemma as applied to the present situation, we have $T \le C_1 L^2$ by the definition \cref{major-arc} of $\tau$ and the fact that $|\xi| \le \tau$ (see just before \cref{just-before-thm12} for discussion), and $\eta \gg 1$. The lemma therefore gives the existence of a progression $P \subset [X]$ with common difference $Q^2$ and length $\gg Q^{-2} T^{-1} X \gg \alpha^{O(1)} X$ on which the density of $A$ is at least $(1 + c) \alpha$ for some absolute $c > 0$. This is a density increment of type \cref{density-increment-calc} (3). 

Starting from \cref{just-before-thm12}, we have now obtained a density increment in all cases of the analysis, and this completes the proof of \cref{density-increment-calc} and hence of \cref{thm:main}.

\section{Limitations of the density increment approach}\label{sec7}

We end with discussion of an example which limits the density increment method to bounds of quasi--polynomial shape. 

\begin{proposition}\label{main-sec7}
Let $X$ be a large integer parameter and let $\eta > 0$. Suppose that \[ \exp(-(\log X)^{3/4 - \eta}) \le \alpha \le \alpha_0(\eta),\] where $\alpha_0(\eta)$ is a sufficiently small absolute constant. Then there is a function $f : [X] \rightarrow [0,1]$ with the following properties: \begin{enumerate}\item[\textup{(1)}]  $\mbm{E}_{x \in [X]} f(x)= \alpha$;
\item[\textup{(2)}]   the sum of $f(x) f(y)$ over all $x, y \in [X]$ with $x - y$ a square is at most $\frac{1}{100} \alpha^2 X^{3/2}$;
\item[\textup{(3)}]  the length $X'$ of the longest subprogression of $[X]$ on which $f$ has density $\ge 2\alpha$ satisfies $X' < X \exp (-\log (1/\alpha)^{1/3})$.\end{enumerate}
\end{proposition}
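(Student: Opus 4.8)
Write $\LL:=\log(1/\alpha)$ throughout. The plan is to take $f$ to be a dilated, rescaled indicator of a Ruzsa-type square--difference--free set modulo a product of \emph{large} primes, the largeness being calibrated so that the rescaling cannot create long dense subprogressions.

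\medskip

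\noindent\emph{The construction.} Set $D:=\exp(\LL^{1/3})$ and fix a sufficiently large absolute constant $k$. Let $p_1<\dots<p_k$ be primes, each $\equiv 1\md 4$ and lying in $(2D,4D]$; such primes exist once $\alpha_0$ is small enough (so that $D$ is large) by the prime number theorem in arithmetic progressions. For each $i$ choose a quadratic non-residue $g_i$ modulo $p_i$ and set $B_i:=\{0,g_i\}\subseteq\Z/p_i\Z$; since $p_i\equiv1\md4$, $-1$ is a quadratic residue, so $-g_i$ is also a non-residue and hence $B_i-B_i=\{0,g_i,-g_i\}$ contains no non-zero quadratic residue modulo $p_i$. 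Put $q:=\prod_i p_i$, let $\tilde B\subseteq\Z/q\Z$ correspond to $\prod_i B_i$ under the Chinese remainder theorem, let $S\subseteq[X]$ be the preimage of $\tilde B$ under reduction modulo $q$, and set $\beta':=|S|/X$. Writing $\beta:=|\tilde B|/q=\prod_i 2/p_i$, one has $\beta\ge(2D)^{-k}=\exp(-O(\LL^{1/3}))$, and since $\LL^{1/3}=o(\log X)$ on the stated range of $\alpha$ we also have $q\le(4D)^k=\exp(O(\LL^{1/3}))=X^{o(1)}$. Hence, for $\alpha\le\alpha_0$ with $\alpha_0$ small, $\beta\ge 2\alpha$ and $\beta'=\beta(1+O(q/X))\ge\beta/2\ge\alpha$. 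Finally define $f:=(\alpha/\beta')\mathbf{1}_S$, which takes values in $[0,1]$; property (1) holds because $\mathbb E_{[X]}f=(\alpha/\beta')\beta'=\alpha$.

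\medskip

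\noindent\emph{Property (2).} The key point is a Ruzsa-type avoidance fact: if $x,y\in S$ and $x-y=n^2$ with $n\ge1$, then for every $i$ the reduction of $n^2$ modulo $p_i$ lies in $B_i-B_i$; but if $p_i\nmid n$ this reduction is a non-zero quadratic residue modulo $p_i$, contradicting the choice of $B_i$. Hence $p_i\mid n$ for all $i$, so $q\mid n$ and $q^2\mid n^2$. Thus every square difference inside $S$ is a multiple of $q^2$, so there are only $O(\sqrt X/q)$ admissible $n$, and (since $q\mid n^2$ forces $x\equiv y\md q$) each contributes at most $|S|\le\beta'X$ ordered pairs. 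Therefore
\[
\sum_{\substack{x,y\in[X]\\ x-y\ \mathrm{a\ square}}}f(x)f(y)\ \le\ \sum_x f(x)^2+(\alpha/\beta')^2\cdot O\!\Big(\tfrac{\sqrt X}{q}\beta'X\Big)\ \le\ \alpha X+O\!\Big(\tfrac{\alpha^2X^{3/2}}{\beta'q}\Big).
\]
Since $\beta'q\ge 2^{k-1}$ and $\alpha X\ll\alpha^2X^{3/2}$ (as $\alpha\gg X^{-1/2}$), the right-hand side is at most $2^{-k+O(1)}\alpha^2X^{3/2}$, which is $\le\frac1{100}\alpha^2X^{3/2}$ once $k$ is a large enough absolute constant.

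\medskip

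\noindent\emph{Property (3).} Let $P\subseteq[X]$ be any subprogression of length $L\ge X\exp(-\LL^{1/3})=X/D$, with common difference $d$. Then $d(L-1)<X$ forces $d<2X/L\le 2D<p_i$ for every $i$, so $\gcd(d,q)=1$ and $P$ meets every residue class modulo $q$ in $L/q+O(1)$ points. Hence $f$ has density $(\alpha/\beta')\big(\beta+O(q/L)\big)=\alpha\big(1+O(qD/X)\big)=\alpha(1+o(1))<2\alpha$ on $P$, using $qD/X=X^{-1+o(1)}$. Thus no subprogression of length $\ge X\exp(-\LL^{1/3})$ has density $\ge2\alpha$, which is precisely (3).

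\medskip

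\noindent The only genuinely delicate point, I expect, is the calibration of the modulus: the primes must exceed $2D=2\exp(\LL^{1/3})$ so that the rescaling factor $\alpha/\beta'$ can inflate the density of $f$ only on progressions whose common difference shares a factor with $q$ (and those are automatically shorter than $X/D$), yet they must be of size only $\Theta(D)$ so that $\beta=\prod 2/p_i$ does not drop below $\alpha$; this tension forces exactly the exponent $\tfrac13$ in (3), and the hypothesis $\alpha\ge\exp(-(\log X)^{3/4-\eta})$ is what keeps $q=X^{o(1)}$ so that all the equidistribution and error terms above are negligible. Everything else is routine verification.
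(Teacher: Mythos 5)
Your construction is correct, but it is genuinely different from the paper's. You use a bounded number $k$ of very large primes $p_i\asymp \exp(\log(1/\alpha)^{1/3})$, restrict to the two-element residue sets $\{0,g_i\}$ with $g_i$ a non-residue (so that, since $p_i\equiv 1 \md{4}$, every nonzero element of $B_i-B_i$ is a non-residue), and verify (2) by the elementary divisibility argument that any square difference inside $S$ must have $q=\prod_i p_i$ dividing its square root, giving the saving factor $\asymp 2^{-k}$ over the trivial count; you verify (3) by the coprimality observation that any progression of length $\ge X\exp(-\log(1/\alpha)^{1/3})$ has common difference below $2D<p_i$, hence equidistributes modulo $q$ and carries density $\alpha(1+o(1))<2\alpha$. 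The paper instead takes a smooth Fourier-analytic weight: a product over $M\asymp T/\log T$ primes in $[T,2T]$ (with $T^{3/4}(\log T)^{-1/2}\asymp\log(1/\alpha)$) of perturbations $(1+\eps)^{-1}(1-\eps\cos(2\pi s x/p))$ with tiny $\eps\asymp T^{-1/4}(\log T)^{1/2}$, proving (2) via Gauss-sum estimates and (3) via the fact that each prime dividing the common difference buys only a factor $1+\eps$ in density, so doubling requires $\gtrsim 1/\eps$ primes and hence a length loss $\exp(-\log(1/\alpha)^{1/3})$. Your checks are sound (the mean is exactly $\alpha$, $\alpha/\beta'\le 1$ once $\alpha\le\alpha_0(k)$, the diagonal term $\alpha X$ is negligible since $\alpha\gg X^{-1/2}$, and $q=X^{o(1)}$ on the stated range), so the proposition as literally stated follows, and your argument is more elementary. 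One caveat worth noting: the paper's remarks use this proposition to argue that the density increment method is stuck at quasi-polynomial bounds, and for that purpose the two examples behave quite differently. In the paper's weight, any density gain $(1+\eps)^m$ costs a proportional length factor $T^{-m}$, so even large increments are expensive; in your example the density jumps by a factor $\asymp\exp(\log(1/\alpha)^{1/3})$ on progressions of length just below the threshold (common difference a single $p_i$), so a density-increment iteration applied to your $f$ would in fact make rapid progress. Thus your construction proves the stated (1)--(3) but would not support the stronger interpretive claims in the surrounding remarks.
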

\begin{remarks}
By a random sampling argument one can show the existence of a set $A \subset [X]$ such that the characteristic function $\mathbf{1}_A$ has similar properties to the function $f$; we omit the detailed statement and proof.

We interpret this result as saying that the density increment method cannot give a bound on $s(X)$ superior to $s(X) \ll X \exp(-c(\log X)^{3/4})$ at least in anything like the form in the present paper (and in previous works). Let us make a couple of points to support this view.

First of all we make the comment that from the perspective of Fourier analysis, the statement that a set $A$ of density $\alpha$ has $\le \frac{1}{100} \alpha^2 X^{3/2}$ square differences is just as good as the assumption that $A$ contains no square differences at all: to be more specific, the analysis starting at \cref{no-square-diff} would work just as well with this assumption. 

Our second point is that if one takes the statement `there is a subprogression of length $X' = X \exp (-\log (1/\alpha)^{1/3})$ on which the density of $A$ is at least $2\alpha$' and then uses this in a density increment argument (or equivalently an inductive argument along the lines of \cref{density-increment-calc}) one ends up proving a bound $s(X) < X \exp (-c(\log X)^{3/4})$ (and no better). We leave the details of this check to the reader.

\end{remarks}

\begin{proof}
To begin the construction, let $p$ be a prime with $p \equiv 1 \md{4}$. For a parameter $\eps \in [0,1]$ to be specified shortly, and define $\psi_p : \Z/p\Z \rightarrow [0,1]$ by
\[ \psi_p(x) := (1 + \eps)^{-1} \big(1 - \eps \cos (2\pi sx/p)\big),\] where $s$ is a quadratic nonresidue modulo $p$. We note that 
\[  \wh{\psi_p}(r) = \mbm{E}_{x \in \Z/p\Z} \psi_p(x) e\Big({-\frac{rx}{p}}\Big) = (1 + \eps)^{-1} \Big(\mathbf{1}_{r = 0} - \frac{\eps}{2} \mathbf{1}_{r = s} - \frac{\eps}{2} \mathbf{1}_{r = -s}\Big)\] for all $r \in \Z/p\Z$.
Note also that from standard Gauss sum estimates (see for instance \cite[Equation (3.29)]{IK-book}) we have
\[ \wh{\mathbf{1}_{\Box}}(s) = \frac{1}{2p} \Big( 1 + \legendre{s}{p} \sqrt{p}\Big) = \frac{1 - \sqrt{p}}{2p}, \] where $\Box$ denotes the set of squares modulo $p$ (including zero). We therefore compute that if
\begin{equation}\label{eps-p-cond} \eps \ge 4 p^{-1/4} \end{equation} then we have
\begin{align}\nonumber 
\mb{E}_{x, y \in \Z/p\Z} & \psi_p(x) \psi_p(y) \mathbf{1}_{\Box}(x - y)   = \sum_r \big| \wh{\psi_p}(r) \big|^2 \wh{1_{\Box}}(r)   = \big| \wh{\psi_p}(0) \big|^2\Big( \wh{1_{\Box}}(0) + \frac{\eps^2}{2} \wh{1_{\Box}}(s)\Big) \\ &   = \big( \mbm{E} \psi_p \big)^2 \Big( \frac{p+1}{2p} + \frac{\eps^2}{2} \Big(  \frac{1 - \sqrt{p}}{2p}  \Big)\Big) \le \big( \mbm{E} \psi_p \big)^2\frac{1}{2}\Big(1 - \frac{\eps^2}{8\sqrt{p}}\Big),\label{p-squares-est}
\end{align}
where we used \cref{eps-p-cond} in the last step.
The next step is to take a product of such functions over a set of primes $p$. Let $C$ be some large absolute constant to be specified later. Let $T \ge C^4$ be a parameter such that if we set
\begin{equation}\label{eps-def} \eps := C(\log T)^{1/2} T^{-1/4}\qquad \mbox{and} \qquad M := \lfloor T/4\log T\rfloor\end{equation}
then
\[ \alpha \le (1 +  \eps)^{-M} \le 2\alpha. \]
It is easy to see that (if $\alpha$ is small enough) there will be such a parameter and that we will have 
\begin{equation}\label{alpha-T-eq} T^{3/4} (\log T)^{-1/2} \asymp \log (1/\alpha).\end{equation} Let $\mc{P}$ be a set of $M$ primes in $[T, 2T]$, all $1 \md{4}$. Such a set will exist if $\alpha$ is sufficiently small (and hence $T$ is sufficiently large). Let $N := \prod_{p \in \mc{P}} p$. Taking a product of the functions $\psi_p$ (and applying the Chinese remainder theorem), we obtain a function $\psi_N : \Z/N\Z \rightarrow [0,1]$ such that, using \cref{p-squares-est}, we have
\begin{equation}\label{avg-sqs} \mb{E}_{x, y \in \Z/N\Z} \psi_N(x) \psi_N(y) \mathbf{1}_{\Box}(x - y) \le 2^{-M} (\mb{E} \psi_N)^2 \prod_{p \in \mc{P}} \Big(1 - \frac{C^2\log T}{8(pT)^{1/2}}\Big) \le 2^{-M} (\mb{E} \psi_N)^2  e^{-C^2/64}.\end{equation}
Here, $\Box$ denotes the squares modulo $N$; note also that the condition \cref{eps-p-cond} required for \cref{p-squares-est} to hold is satisfied. Note that 
\begin{equation}\label{psi-N-avg} \mbm{E} \psi_N = (1 + \eps)^{-M} \in [\alpha, 2\alpha].\end{equation}
Let us also note that $N \le 4^{2T}$ so, in view of \cref{alpha-T-eq} and the assumption that $\alpha > \exp (-(\log X)^{3/4 - \eta})$, we have $N < X^{1/10}$ provided that $\alpha$ is sufficiently small.

Finally we come to the definition of the function $f$ itself. We define
\[ f(x) := \left\{ \begin{array}{ll} c\psi_N (x \md{N}) & x \le N \lfloor X/N\rfloor  \\ \alpha &  N \lfloor X/N\rfloor < x \le X. \end{array}\right.\]
where $c \in [\frac{1}{2},1]$ is chosen so that $c \mbm{E} \psi_N = \alpha$ (which is possible by \cref{psi-N-avg}). By construction, we have $\mbm{E}_{[X]} f = \alpha$, which is property (1) in the proposition.

Turning to (2), we abuse notation by writing $\Box$ for the set of squares in $\N$ (retaining the earlier use of the same symbol for the squares in $\Z/N\Z$). We wish to bound the sum 
\[ S := \sum_{x,y\in [X]}f(x)f(y)\mathbf{1}_{\Box}(x - y).\]
The contribution to $S$ from $x$ or $y$ larger than $N \lfloor X/N\rfloor$ is $\ll NX$ (ignoring the constraint $x - y \in \Box$ completely) which is negligible compared to $\alpha^2 X^{3/2}$ by the assumption on $\alpha$ and the bound $N < X^{1/10}$. For the remaining portion of the sum, we have 
\begin{equation} \sum_{1 \le x,y\le N\lfloor X/N\rfloor}f(x)f(y)\mathbf{1}_{\Box}(x - y) = c^2\sum_{a,b\in \Z/N\Z}\psi_N(a)\psi_N(b)\mathbf{1}_{\Box}(a-b)\sum_{\substack{1 \le x, y\le N\lfloor X/N\rfloor \\ x\equiv a\mdsub{N}\\ y\equiv b\mdsub{N}}} \mathbf{1}_{\Box}(x - y)\label{eq75}.
\end{equation}
For fixed $y$, the inner sum over $1 \le x \le N \lfloor X/N\rfloor$ is bounded above by the number of squares in $[X]$ which are congruent to $a - b \md{N}$. Such squares must be $x^2$ with $x \equiv t \md{N}$, where $t$ is one of the square roots of $a - b \md{N}$, of which there are at most $2^M$. Thus for fixed $y$ the inner sum over $x$ in \cref{eq75} is $\ll 2^M (X^{1/2}/N)$. For fixed $b$, there are $\ll X/N$ choices of $y$. Summing over these, it follows that
\begin{align*}
\sum_{1 \le x,y\le N\lfloor X/N\rfloor}f(x) f(y)\mathbf{1}_{\Box}(x - y) & \ll 2^{M} X^{3/2} N^{-2}\sum_{a,b\in \Z/N\Z}\psi_N(a)\psi_N(b) \mathbf{1}_{\Box}(a-b) \\ & = 2^M X^{3/2}\mb{E}_{\Z/N\Z} \psi_N(a)\psi_N(b)\mathbf{1}_{\Box}(a-b).\end{align*}
By \cref{avg-sqs} and \cref{psi-N-avg} it follows that 
\[ 
\sum_{1 \le x,y\le N\lfloor X/N\rfloor} f(x) f(y)\mathbf{1}_{\Box}(x - y) \ll  \alpha^2 X^{3/2}e^{-C^2/64}. \]
Taking $C$ sufficiently large, we see that $S \le \frac{1}{100}\alpha^2 X^{3/2}$, which is what we wanted to prove.

Finally we turn to (3). We first consider the case that $P$ is a maximal subprogression of $[X]$ whose common difference is a product $p_1 \cdots p_m$ of distinct primes from $\mc{P}$.
By inspection of the construction one may see that the average of $\psi_N$ over any congruence class modulo $p_1 \cdots p_m$ is at most $(1 + \eps)^{m - M}$. Thus, by the construction of $f$ we see that the average of $f$ over $P$ is at most $(1 + \eps)^m \alpha$. Now let $P$ be an arbitrary subprogression of $[X]$, of length $X'$. It is contained in a maximal subprogression $\tilde P$ of the above form, and unless $X'$ is already very small, the averages of $f$ on $P$ and on $\tilde P$ are very close; we certainly have that the average of $f$ on $P$ is bounded above by $\frac{3}{2} (1 + \eps)^m \alpha$.

Thus we see that in order to double the average of $f$ from $\alpha$ to $2\alpha$, we must take $(1 + \eps)^m \ge \frac{4}{3}$, which means that 
\[ X' \le X T^{-m} \le X \big( \frac{4}{3}\big)^{-\log T/\log(1 + \eps)} < X \exp (-T^{1/4}) < X \exp (-\log(1/\alpha)^{1/3}),\]
where here we used the definition \cref{eps-def} of $\eps$ and \cref{alpha-T-eq}, and the fact that $T, 1/\alpha$ are sufficiently large.\end{proof}

We remark that a minor modification of this construction shows that the density increment argument cannot give better than quasipolynomial bounds for related (but seemingly easier) questions such as locating a non-zero square in $2A - 2A$.

\appendix

\section{A hypercontractive inequality of Keller, Lifshitz and Marcus}\label{appA}

In this appendix we deduce \cref{thm:input-hypercontractive} from \cite[Corollary 4.7]{KLM23}. As remarked after the statement of the former result, this amounts to checking that the statement of \cite[Corollary 4.7]{KLM23}, which is given for functions $f : \Omega^n \rightarrow \R$, remains valid for functions $f : \Omega_1 \times \cdots \times \Omega_n \rightarrow \C$. All of the nomenclature of \cite{KLM23} adapts from the setting of $\Omega^n$ to a general product $\Omega_1 \times \cdots \times \Omega_n$ with essentially no change. In particular, this is so for the key definition of $(r,\gamma)$-$L_2$-global (see \cite[Definition 4.4]{KLM23}) and the definitions that feed in to it. We use the nomenclature of \cite{KLM23} (extended from $\Omega^n$ to general products) without further comment. 

First we extend \cite[Corollary~4.7]{KLM23} to the case of functions $f : \Omega^n \rightarrow \C$. 
For this, observe that $\Re f$ and $\Im f$ are each $(r,\gamma)$-$L_2$-global. (See \cite[Definition 4.4]{KLM23} for the definition; this is the same concept that we have called $(r,\gamma)$-derivative-global in \cref{def:deriv-global} in the specific setting of functions on $B(G_{\mc{Q}})$.) Furthermore as 
\[|x+iy|^{p} = (x^2+y^2)^{p/2}\le 2^{p/2 - 1}(|x|^{p} + |y|^{p}),\]
we have that 
\begin{equation}\label{to-tensor} \snorm{T_{\rho} f}_p^{p}\le 2^{p/2 - 1} \Big(\snorm{T_{\rho} (\Re f)}_p^{p} + \snorm{T_{\rho} (\Im f)}_p^{p}\Big)\le 2^{p/2 - 1} \snorm{f}_2^2  \gamma^{p-2}.\end{equation}
Write $f^{\otimes m} : (\Omega^n)^m \rightarrow \C$ for the Cartesian product of $m$ copies of $f$. Note that $f^{\otimes m}$ is $(r,\gamma^{m})$-$L_2$-global, $\snorm{T_{\rho}f}_p^{m} =\snorm{T_{\rho}(f^{\otimes m})}_p$ and $\snorm{f}_2^m = \snorm{T_{\rho}(f^{\otimes m})}_2$.
Applying \cref{to-tensor} with $f^{\otimes m}$, taking $m$th roots, and sending $m\to\infty$ we obtain the desired inequality $\snorm{T_{\rho} f}_p^{p}\le \snorm{f}_2^2 \gamma^{p-2}$.

We next extend the scope of \cite[Corollary~4.7]{KLM23} from functions $f:\Omega^{n}\to \C$ to functions $f:\Omega_1\times \cdots \times \Omega_n\to \C$ on general product spaces. In order to do so, given $f:\Omega_1\times \cdots \times \Omega_n\to \C$, we may define $\wt{f}:(\Omega_1\times \cdots \times \Omega_n)^{n}\to \C$ via 
\[ \wt{f}\Big( \big( (x_{ij})_{j \in [n]}\big)_{i \in [n]} \Big) = f \big( (x_i)_{i \in [n]} \big).\]

We see that $\wt{f}$ is $(r,\gamma)$-$L_2$-global, $\snorm{T_{\rho}f}_p = \snorm{T_{\rho}\wt{f}}_p$, and $\snorm{\wt{f}}_2 = \snorm{f}_2$. This immediately implies the desired result.

\bibliographystyle{amsplain0}
\bibliography{main.bib}

\end{document}